\numberwithin{equation}{section}
\newcommand\R{\mathbb R}
\newcommand\C{\mathbb C}
\def\eps{\varepsilon}
\newcommand{\CalE}{\mathcal{E}}
\newcommand{\CalF}{\mathcal{F}}
\newcommand{\CalH}{\mathcal{H}}
\newcommand{\CalL}{\mathcal{L}}
\newcommand{\CalO}{\mathcal{O}}
\newcommand{\diag}{{\rm diag }}
\newcommand{\RM}{\mathbb{R}}
\newcommand{\ZM}{\mathbb{Z}}
\newcommand{\CM}{\mathbb{C}}
\newcommand{\cn}{\operatorname{cn}}
\newcommand{\abs}[1]{\lvert#1\rvert}
\newcommand{\parens}[1]{\left( #1 \right)}
\newcommand{\brackets}[1]{\left[ #1 \right]}
\newcommand{\braces}[1]{\left\{ #1 \right\}}
\newcommand{\innerprod}[2]{\left\langle #1, #2 \right\rangle}
\newcommand{\norm}[1]{\left\lVert #1 \right\rVert}
\DeclareMathOperator{\range}{range}
\newcommand{\Z}{\mathbb{Z}}
\newcommand{\CalU}{\mathcal{U}}
\renewcommand{\Re}{\operatorname{Re}}
\renewcommand{\Im}{\operatorname{Im}}
\newtheorem{theorem}{Theorem}[section]
\newtheorem{proposition}[theorem]{Proposition}
\newtheorem{lemma}[theorem]{Lemma}
\theoremstyle{remark}
\newtheorem{remark}[theorem]{Remark}
\title{Nondegeneracy and Stability of Antiperiodic Bound States for Fractional Nonlinear
Schr\"odinger Equations}
\author{Kyle~M.~Claassen\thanks{Department of Mathematics, Rose-Hulman Institute of Technology, 5500 Wabash Ave, Terre Haute, IN 47803; claassen@rose-hulman.edu} 
\quad \&\quad Mathew~A.~Johnson\thanks{Department of Mathematics, University of Kansas, 1460 Jayhawk Boulevard, 
Lawrence, KS 66045; matjohn@ku.edu} }
\date{\today}
\begin{document}

\maketitle

\begin{abstract}
We consider the existence and stability of real-valued, spatially antiperiodic standing wave solutions
to a family of nonlinear Schr\"odinger equations with fractional dispersion and power-law nonlinearity.  
As a key technical
result, we demonstrate that the associated linearized operator is nondegenerate when restricted
to antiperiodic perturbations, i.e. that its kernel is generated by the translational and 
gauge symmetries of the governing evolution equation.  In the process, we provide a characterization
of the antiperiodic ground state eigenfunctions for linear fractional Schr\"odinger operators on $\mathbb{R}$ 
with real-valued, periodic potentials as well as a Sturm-Liouville type oscillation theory for 
the higher antiperiodic eigenfunctions.
\end{abstract}

\section{Introduction}  \label{S:intro}

In this paper, we consider the existence and stability properties of real-valued, spatially periodic 
solutions to a class of fractional nonlinear Schr\"odinger equations (fNLS) of the form
\begin{equation}  \label{E:FNLS1}
i u_t - \Lambda^\alpha u + \gamma |u|^{2\sigma} u = 0,\quad (x,t)\in\mathbb{R}^2
\end{equation}
where subscripts denote partial differentiation.  Here and throughout, $u=u(x,t)$ is a 
generally complex-valued function, and the pseudodifferential operator $\Lambda:=\sqrt{-\partial_x^2}$,
referred to as Calderon's operator, is of first order and, acting on $2T$-periodic functions, is defined by its Fourier multiplier
via
\[
\widehat{\Lambda f}(n)=\frac{\pi|n|}{T}\hat{f}(n),\quad n\in\mathbb{Z}.
\]
Further, $\gamma=\pm 1$ distinguishes between focusing (attracting) $\gamma=+1$ and defocusing (repulsive)
$\gamma=-1$ nonlinearities.  

The parameter $\alpha\in(0,2]$ describes the fractional dispersive nature of the equation.  
When $\alpha=2$, the operator $\Lambda^2=-\partial_x^2$ denotes the local (positive) Laplacian. 
In this classical case, \eqref{E:FNLS1} reduces to the well-studied nonlinear Schr\"odinger equation (NLS),
which is known to serve as canonical model describing weakly nonlinear wave propagation
in dispersive media; see, for example, \cite{SS99}.  
When $\alpha\in(0,2)$, $\Lambda^\alpha$ denotes the so-called
\emph{fractional Laplacian}, which arises naturally in a variety of applications including 
the continuum limit of discrete models with long range interaction \cite{KLS}, 
dislocation dynamics in crystals \cite{CFM}, mathematical biology \cite{MCGJR99}, water
wave dynamics \cite{IP14}, and financial mathematics \cite{CT}; see also \cite{BV15} for a recent
discussion on applications.
For such $\alpha$, the \emph{nonlocal} fNLS \eqref{E:FNLS1} has been introduced by Laskin \cite{Las1} in the 
context of fractional quantum mechanics, in which one generalizes the standard Feynman path integral from Brownian-like
to L\'{e}vy-like quantum mechanical paths.  A rigorous derivation in the context of charge transport in 
bio polymers (like DNA), can be found in \cite{KLS}.  Finally, we point out the case $\alpha=1$
may be thought to describe the relativistic dispersion relation $\omega(\xi)=\sqrt{|\xi|^2+m^2}$,
an observation recently utilized in the mathematical description of Boson-stars; see \cite{FJS}.

Throughout our analysis, we will be concerned with solutions of the form
\begin{equation}\label{E:bound_state}
u(x,t) = e^{i\omega t}\phi(x-ct;c),
\end{equation}
where $\omega,c\in\mathbb{R}$ are parameters and $\phi$ is a bounded
solution to the (generally) nonlocal profile equation
\[
\Lambda^\alpha\phi+\omega \phi+ic\phi'-\gamma|\phi|^{2\sigma}\phi=0,
\]
where $'$ denotes differentiation with respect to the spatial variable.
When $c=0$, the focusing fNLS is well-known to admit standing solitary waves that are asymptotic to zero at spatial infinity.  Among such solitary wave solutions,
specific attention is often paid to the positive, radially symmetric
solutions typically referred to as ``ground states".  The stability of such ground states dates
back to the work of Cazenave and Lions \cite{CL82} and Weinstein \cite{Weinstein85,Wein86} on the classical
case $\alpha=2$, where the authors use the method of concentration compactness along with the 
construction of appropriate Lyapunov functionals.  For $\alpha\in(1,2]$, such ground states are known to be
orbitally stable provided the nonlinearity is energy sub-critical, i.e. if $0<\sigma<\alpha$; see
\cite{GH12,Wein86}, for example.
While no nontrivial localized solutions exist in the defocusing case $\gamma=-1$, it is known in the classical
case $\alpha=2$ to admit so-called black solitons of the form \eqref{E:bound_state} corresponding
to monotone front-like solutions asymptotic to constants as $x\to\pm\infty$.  The dynamics and stability of black
solitons has been studied in numerous works; see, for example, \cite{BGS08,GP15Blk,GS15}.  
For the fractional case, the authors plan to report on the existence, nondegeneracy,
and stability of black solitons in the defocusing case in a future work.

The stability of periodic standing waves of \eqref{E:FNLS1} is considerably less understood 
than their asymptotically constant counterparts, even in the classical $\alpha=2$ case.  
Recall that when $\alpha=2$ and $\sigma=1$, Rowlands \cite{Rowlands74} formally demonstrated the spectral instability
to long-wavelength (i.e. modulational) perturbations of all periodic standing waves in the focusing case, along
with the stability (to long-wavelength perturbations) of such waves in the defocusing case.  Rowlands' results
were later rigorously established by Gallay and Haragus \cite{GHsmall} for small amplitude waves, and
by Gustafson, Le Coz, and Tsai \cite{GCT} and Deconinck and Segal \cite{DS16} for arbitrary amplitude waves.
The spectral stability to arbitrary bounded perturbations 
of periodic standing waves in the defocusing NLS ($\alpha=2$) was later shown
by Gallay and Haragus \cite{GHsmall} for small amplitude waves and by Bottman, Deconinck, and Nivala \cite{BDN11},
again using complete integrability, for waves of arbitrary amplitude.
These observations motivate us to restrict much of our
attention to the defocusing case $\gamma=-1$, as we expect such waves in the focusing case  
to be modulationally unstable
for general $\alpha$, although, this has not been verified.
In fact, to the authors' knowledge there has been no rigorous study into the dynamics of such
waves in the fractional case.  

\

In this work, we will be concerned with the \emph{nonlinear stability} of periodic standing waves of \eqref{E:FNLS1}
in the genuinely nonlocal case $\alpha\in(0,2)$.  We mark that since the governing 
evolution equation is invariant under phase rotation and spatial translation, i.e. the map
\[
u(x,t)\mapsto e^{i\beta}u(x-x_0,t),\quad x_0,\beta\in\mathbb{R}
\]
preserves the class of solutions of \eqref{E:FNLS1}, we should only expect
stability up to these invariances.  Such \emph{orbital stability} results for solutions
of \eqref{E:FNLS1} have been obtained in the local case $\alpha=2$.
The first result in this direction we are aware
of in the periodic case was due to Angulo-Pava \cite{Pava07}, where, in the focusing case with $\alpha=2$, 
the orbital stability of dnoidal type (hence strictly positive) standing
waves was established to co-periodic perturbations, i.e. to perturbations with the same period as the underlying wave.  
Pava's analysis of dnoidal waves relied on a direct adaptation of the classical approach of orbital stability by Grillakis, Shatah, and Strauss \cite{GSS1,GSS2}.
In particular, in \cite[Theorem 3.1]{Pava07} it was shown that the Hessian of the associated Lagrangian at such a dnoidal wave, when acting on co-periodic
perturbations, has exactly one negative eigenvalue, a double (semi-simple) eigenvalue at zero generated by the above (continuous) invariances  of the PDE, and the rest of the spectrum
is positive and uniformly bounded from below away from zero.  In the case of cnoidal (hence sign changing) waves, however, Pava showed in \cite[Theorem 3.2, Theorem 3.4]{Pava07} that the Hessian operator
\emph{has three negative eigenvalues} when acting on co-periodic perturbations, thus invalidating the structurial structural hypotheses of \cite{GSS1,GSS2}.
This issue was later resolved in the $\alpha=2$ case by Gallay and Haragus \cite{GH07},
where the authors demonstrated the orbital stability of cnoidal waves in the defocusing cubic NLS ($\alpha=2$, $\sigma=1$)
to perturbations with the same period as the \emph{modulus} of the underlying wave.  In particular, in \cite{GH07} the authors verified that, when $\alpha=2$,
the Hessian of the Lagrangian about a $2T$-periodic cnoidal wave has only \emph{one} negative eigenvalue when acting on $T$-antiperiodic\footnote{$T$-antiperiodic functions are defined in \eqref{E:antiperiodic_definition} and the surrounding discussion.  Notice that all $T$-antiperiodic functions are necessarialy $2T$-periodic.}
perturbations, and then establishing nonlinear orbital stability of such waves to this more restrictive class of perturbations via the theory of Grillakis, Shatah, and Strauss.  
See also the more recent work \cite{GP15Cn} where the restriction to antiperiodic perturbations is removed through the use
of the completely integrable structure of the cubic, defocusing NLS.

Here, we take matters further and study the existence and nonlinear stability of periodic
standing waves of the fNLS \eqref{E:FNLS1} with fractional dispersion\footnote{The restriction to $\alpha>1$ is necessitated by our existence theory,
which requires the energy space $H^{\alpha/2}$ to be a Banach algebra.  It is not known if such waves exist when $\alpha<1$.  Similarly, the restriction $\alpha<2$ is necessary
to in the development of the antiperiodic ground state theory in Section \ref{S:GS_Theory}: see Remark 3.5 below.  Note the case $\alpha=2$ is not included in our analysis since it 
can be treated by ODE-based techniques as in \cite{Pava07,GH07}.  It is not known if our results remain true when $\alpha>2$.} $\alpha\in(1,2)$, provided, following
\cite{GH07}, we appropriately restrict the class of perturbations.  In the defocusing case, 
where we primarily restrict our attention,
we will show in Section \ref{S:existence} that, for each $T>0$, there exists a three-parameter family
of real-valued, $T$-\emph{antiperiodic} standing waves, i.e. $2T$-periodic
waves with
\begin{equation}
\phi(x+T)=-\phi(x),  \label{E:antiperiodic_definition}
\end{equation}
arising as local minima of the Hamiltonian energy subject to conservation of momentum: see 
Proposition \ref{P:min} and Lemma \ref{L:c=0_prop}.  
Once existence is established, our main goal is to establish the nonlinear stability
of such real-valued, $T$-antiperiodic standing waves to small $T$-antiperiodic perturbations: see
Theorem \ref{T:orbital_stability}.  
From the above discussion, this effectively extends the ``cnoidal-wave" analysis of Galley \& Haragus \cite{GH07} for the classical NLS  ($\alpha=2$)
to the fractional case $\alpha\in(1,2)$.

A key step in our stability analysis is to show that the Hessian of the Hamiltonian energy
is \emph{nondegenerate} at such an antiperiodic, local constrained minimizer of the defocusing fNLS;
that is, that the kernel is generated only by spatial translations and phase rotations.
The nondegeneracy of the linearization is known to play an important role in the stability
of traveling and standing waves (see \cite{Wein86}, \cite{Lin08}, and \cite{GH07})
and in the blowup analysis (see \cite{KMR11}, \cite{SS99}, for instance) of the related dynamical equation.
In the case of the classical NLS with cubic nonlinearity, 
the nondegeneracy at such antiperiodic standing wave solutions was established
by Gallay and Haragus \cite[Proposition 3.2]{GH07}.  
Their proof, however, fundamentally relies on ODE techniques, in particular
on the Sturm-Liouville theory for ODEs and a-priori bounds on the number of linearly independent solutions to the linearized equations, and is hence not directly applicable to the nonlocal case $\alpha\in(0,2)$.
Nevertheless, Frank and Lenzmann \cite{FL13} recently established
the nondegeneracy of \emph{solitary} waves for a family of nonlocal evolution equations, including
the focusing fNLS.  Their analysis relied on the development of a suitable substitute for the Sturm-Liouville
theory, following from the characterization of the fractional Laplacian as a Dirichlet-to-Neumann operator
for a \emph{local} elliptic problem in the upper half-plane, allowing them to bound from above the number of sign changes of eigenfunctions for fractional linear Schr\"odinger
operators on the line.  This oscillation theory was recently extended to the periodic
setting in \cite{HJ15}, where the authors considered the orbital stability of periodic traveling 
waves of the fractional gKdV equation.  

While the oscillation theory in \cite{HJ15} seems to apply
directly to periodic standing waves in the focusing fNLS, see Remark \ref{R:focusing_signdefinite}, 
it requires considerable modification in the 
defocusing case, accounting for the $T$-antiperiodicity of the eigenfunctions compared with the 
$T$-\emph{periodicity} of the potential in the associated linear Schr\"odinger operators.  We 
point out that even in the classical $\alpha=2$ case, antiperiodic ground states for linear Schr\"odinger
operators \emph{need not be simple}.  Indeed, it is not difficult to cook up examples of potentials for which the 
associated Schr\"odinger operator will have an antiperiodic ground state with 
multiplicity two; see \cite{MW79} for instance.  We handle this difficulty
by restricting such operators to even and odd antiperiodic subspaces, demonstrating that the associated
linear semigroup is positivity improving on these individual subspaces.  A twist on 
standard Perron-Frobenius arguments
then yields a characterization of the antiperiodic ground states of such fractional linear Schr\"odinger operators 
restricted to even and odd functions: see Theorem \ref{T:ground}.  
Further, using antiperiodic rearrangement inequalities developed in
Appendix \ref{A:Polya}, we demonstrate that the ordering of the even and odd antiperiodic ground states
for a fractional linear Schr\"odinger operator depends explicitly on the monotonicity properties of the
real-valued periodic potential: see Proposition \ref{P:gsordering}.
Once the appropriate ground state theories are developed, we
then provide a suitable oscillation theory for higher antiperiodic eigenfunctions by following the arguments
in \cite{FL13,HJ15}: see Lemma \ref{L:oscillation}.

We emphasize that the realness of the $T$-antiperiodic solutions
$\phi$ discussed above is \emph{absolutely crucial} to our analysis, 
guaranteeing that the Hessian of the Lagrangian functional
encoding solutions of the profile equation as critical points acts as a diagonal operator
on $L^2_{\rm a}(0,T)$.  This diagonal property reduces the nondegeneracy
analysis to the study of two scalar linear fractional Schr\"odinger operators, which is precisely
the setting where our techniques from Section \ref{S:nondegeneracy} apply.
In the ``nontrivial phase" case, corresponding to genuinely complex-valued solutions $\phi$, 
the Hessian operator couples the real and imaginary parts of the perturbations,
invalidating the strategy and techniques contained in this paper: see equation \eqref{E:Hessian} below.
The corresponding nondegeneracy and stability analysis
for the nontrivial phase solutions is completely open in the nonlocal case 
and is a \emph{very} interesting direction for future research.  To our knowledge,
the only nondegeneracy result known in the nontrivial phase case was provided by in \cite{GH07} using,
as discussed before, restrictive ODE techniques.

In the focusing case, we construct real-valued antiperiodic solutions through a different 
constrained minimization procedure, producing solutions that may or may not be constrained energy minimizers:
see Proposition \ref{P:focusing_existence}.
By applying our techniques developed for the defocusing case, we establish in Proposition \ref{P:focusing_nondegeneracy}
the nondegeneracy of all of these waves \emph{independent} of whether they are constrained energy minimizers.  
As an application, we then characterize the stability of
these waves in terms of variations of the $L^2$ norm with respect to an appropriate parameter: see
Theorem \ref{T:focusing_orbital_stability} and Proposition \ref{P:focusing_instability}.

\

The outline of the paper is as follows.  In Section \ref{S:existence} we use variational arguments
to establish the existence of antiperiodic solutions of the profile equation \eqref{E:profile_equation} in the 
defocusing case.  Our main existence result is Proposition \ref{P:min}.  Section \ref{S:nondegeneracy} is devoted to the proof of our main technical result
Proposition \ref{P:nondegeneracy}, which establishes the nondegeneracy of the real-valued, antiperiodic solutions constructed in Section \ref{S:existence}.
As discussed above, the proof of Proposition \ref{P:nondegeneracy} hinges on the development of an antiperiodic ground state theory for fractional
Schr\"odinger operators (see Theorem \ref{T:ground} and Proposition \ref{P:gsordering}), as well as the development of an antiperiodic oscillation, i.e. Sturm-Liouville type, 
theory (see  Lemma \ref{L:oscillation}).  With the nondegeneracy result Proposition \ref{P:nondegeneracy} established, we then prove the nonlinear orbital stability
of these real-valued, antiperiodic waves in Section \ref{S:stability_of_minimizers}: see Theorem \ref{T:orbital_stability} for a precise statement of this stability result. 
Finally, in Section \ref{S:focusing_case} we discuss an extension of our work to the focusing case.
Appendix \ref{A:Polya} contains proofs of the relevant rearrangement inequalities used in the development of the antiperiodic ground state theory used in Section \ref{S:nondegeneracy}.

\section{Existence of Constrained Local Minimizers in Defocusing Case}  \label{S:existence}

We begin our analysis by establishing the existence of periodic waves of the form
\eqref{E:bound_state} of the \emph{defocusing} ($\gamma=-1$) fNLS \eqref{E:FNLS1}.
Substituting the traveling wave ansatz \eqref{E:bound_state} 
into \eqref{E:FNLS1} yields the nonlocal \textit{profile equation}
\begin{equation}  \label{E:profile_equation}
\Lambda^\alpha \phi +\omega\phi + ic\phi'+ |\phi|^{2\sigma}\phi = 0,~~\omega,c\in\mathbb{R}.
\end{equation}
where here $\phi$ is generally a complex-valued function and $\sigma>0$; further restrictions will
be placed on $c$, $\omega$, and $\sigma$ later.
Here and throughout, given a finite period $T>0$
we consider for each $\alpha>0$ the operator $\Lambda^\alpha$ as 
a closed operator on 
\[
L^2_{\rm per}([0,2T];\CM):=\left\{f\in L^2_{\rm loc}(\mathbb{R};\CM):f(x+2T)=f(x)~~\forall x\in\mathbb{R}\right\}
\]
with dense domain $H^\alpha_{\rm per}([0,2T];\CM)$, defined
via its Fourier series as
\[
\Lambda^\alpha f(x) = \sum_{n\in\mathbb{Z}\setminus\{0\}}\left| \frac{\pi n}{T} \right|^\alpha e^{\pi inx/T}\hat{f}(n),~~\alpha\geq 0.
\]
We will be interested primarily in \emph{real-valued, standing} wave solutions of \eqref{E:FNLS1}, in which
case $c=0$.  To motivate the expected structure of such solutions, 
we note that when $\alpha=2$ and $c=0$
the profile equation \eqref{E:profile_equation} is integrable and, upon integration, can be expressed as
\[
\frac{1}{2}\left(\phi'\right)^2=H-V(\phi;\omega)
\]
where $H\in\mathbb{R}$ denotes the ODE energy and 
\[
V(\phi;\omega):=-\frac{\omega}{2}\phi^2-\frac{1}{2\sigma+2}\phi^{2\sigma+2}
\]
denotes the effective potential energy.
Observe that the potential $V$ is even for every $\omega\in\RM$ and,
possesses a unique local minimum when $\omega<0$,
yielding the existence of a one-parameter family, parameterized by $H$, of periodic orbits\footnote{When $\omega>0$,
the potential $V$ is strictly decreasing on $(0,\infty)$ and hence no nontrivial bounded solutions
exist in this case.}
oscillating symmetrically about the equilibrium solution $\phi=0$; see Figure \ref{F:defocusing_potential}.    
Further, up to translations these waves can be chosen to be even and \emph{antiperiodic}, i.e.
they satisfy
\[
\phi(x+T)=-\phi(x)
\]
where $2T>0$ denotes the fundamental period of $\phi$.
Note that while such solutions can be expressed explicitly in terms 
of the Jacobi elliptic function $\cn$, the authors
are unaware of such an explicit solution formula for $\alpha\in(0,2)$.
Nevertheless, for each $\alpha\in(1,2)$ and $T>0$ we expect to be able to construct a three-parameter family of 
real-valued, $T$-antiperiodic solutions of the profile equation \eqref{E:profile_equation}.

\begin{figure}[t]
\begin{center}
\includegraphics[scale=0.12]{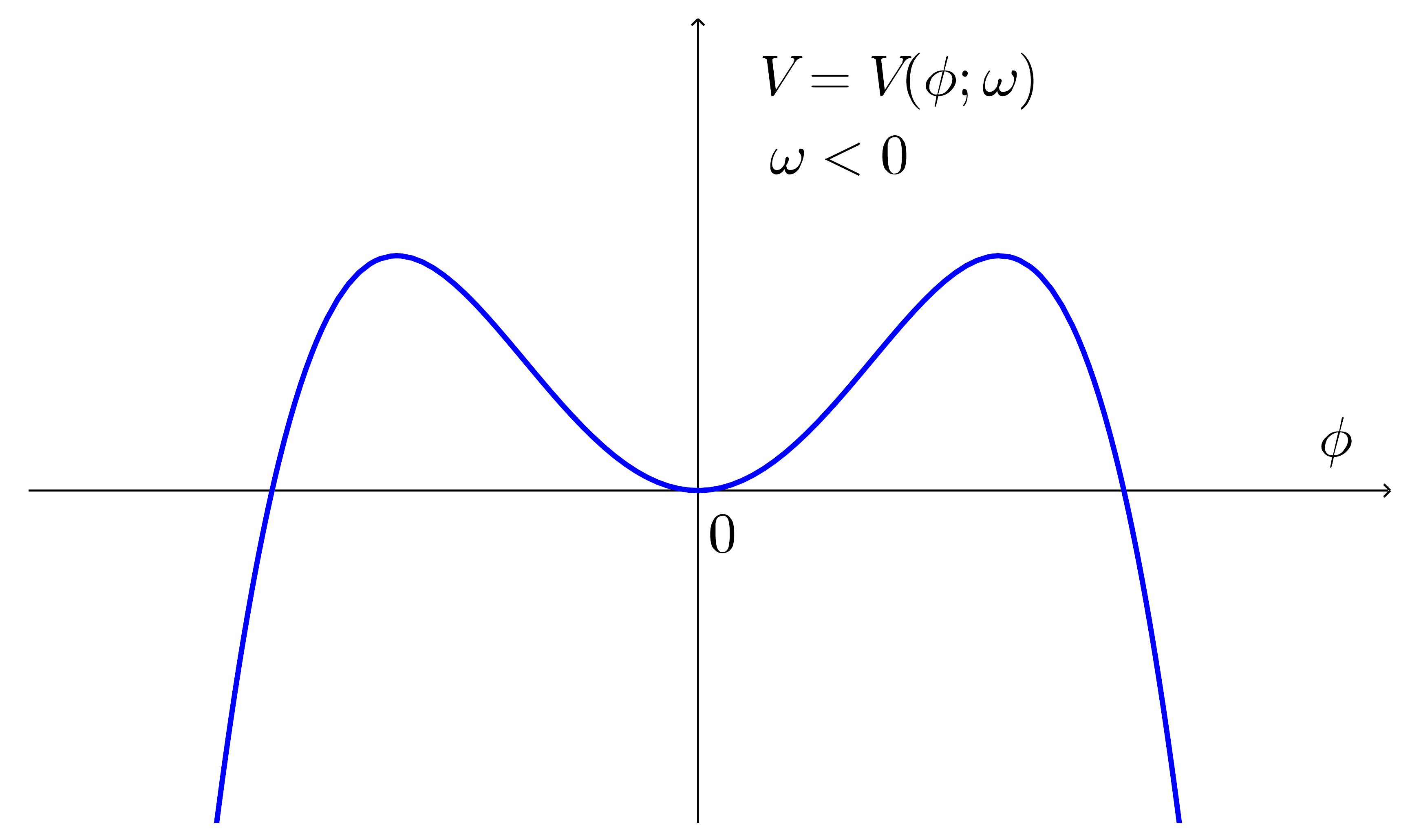}
\end{center}
\caption{The effective potential $V(\phi;\omega)$ for the defocusing NLS with $\omega < 0$.}
\label{F:defocusing_potential}
\end{figure}

In the absence of the integrable ODE structure present in the case $\alpha=2$, we will construct real-valued,
antiperiodic solutions of \eqref{E:profile_equation} with $c=0$ through a constrained minimization argument. 
We emphasize, however, that in our forthcoming stability analysis it will be important that our real-valued
antiperiodic solutions be (locally) embedded into a larger family of \emph{complex-valued, traveling} waves
with $c\neq 0$; see Remark \ref{r:travel} below.  
In the local case $\alpha=2$, this embedding is easily handled due to the 
existence of an exact Galilean invariance: precisely, if $u(x,t)$ is a solution of \eqref{E:FNLS1} with $\alpha=2$,
then so is
\begin{equation}\label{E:galilean}
\mathcal{G}_cu(x,t)= e^{i{\textstyle\frac{c}{2}}x-i\textstyle{\frac{c^2}{4}}t} u(x-ct,t)
\end{equation}
for each wave speed $c\neq 0$.  However, when $\alpha\neq 2$ no such exact Galilean invariance exists.
As a result, our variational arguments below will actually construct a general class of antiperiodic
traveling wave solutions of \eqref{E:FNLS1} of the form \eqref{E:bound_state} with $|c|$ sufficiently small,
and then show through various rearrangement arguments that when $c=0$ the resulting waves
can be chosen to be real-valued with appropriate dependencies on the wave speed.

To this end, given a $T>0$ we work throughout in the $L^2$-based Lebesgue and Sobolev spaces over the 
antiperiodic interval $[0,T]$.  Define the \emph{real} vector space
\[
L^2_{\rm a}([0,T];\mathbb{C}):=\left\{f\in L^2_{\rm loc}(\RM;\CM):f(x+T)=-f(x)~~\forall x\in\RM\right\}
\]
equipped with inner product $\left<u,v\right>=\Re\int_0^T u \bar{v}~dx$, and 
for each $\alpha\in(0,2)$ define
\begin{equation}\label{D:Y}
H^{\alpha/2}_{\rm a}([0,T];\mathbb{C}) := \left\{ f \in H^{\alpha/2}_{\rm loc}(\R;\mathbb{C}) : 
		f\in L^2_{\rm a}([0,T];\CM)\right\}
\end{equation}
considered as a \emph{real} vector space with inner product
\[
(u,v) := \Re \int_0^{T} (u \bar{v} + \Lambda^{\alpha/2}u \, \overline{\Lambda^{\alpha/2}v}) \, dx.
\]
By Sobolev embedding, it is trivial to see that $H^{\alpha/2}_{\rm a}(0,T)$ is a closed subspace
of $H^{\alpha/2}_{\rm per}(0,T)$, and hence is itself a Hilbert space, for all $\alpha>1$.
Furthermore, we identify the dual space $H^{\alpha/2}_{\rm a}(0,T)^*$ with 
$H^{-\alpha/2}_{\rm a}(0,T)$ via the pairing
\begin{equation}\label{D:dual}
\innerprod{u}{v} := \Re \int_0^{T} u \bar{v} \, dx,\quad u\in H^{\alpha/2}_{\rm a}(0,T)^*,~~
v\in H^{\alpha/2}_{\rm a}(0,T).
\end{equation}
Throughout, unless otherwise stated, we will use the slight abuse of notation that
\[
H^s_{*}(0,T):=H^s_*([0,T];\mathbb{C})
\]
where here $*$ stands for any of ``per", or ``a".  
We note that we may at times work with the subspace of \emph{real-valued} functions
$H^{\alpha/2}_{\rm a}([0,T];\mathbb{R})$ in $H^{\alpha/2}_{\rm per}(0,T)$;
when the choice of scalar field is irrelevant or obvious from context, we will simply
write $H^{\alpha/2}_{\rm a}(0,T)$ for these spaces.

To begin with our existence theory, we consider $\alpha>1$ and define the functionals
\[
K(u):=\frac{1}{2}\int_0^T|\Lambda^{\alpha/2}u|^2dx,\quad 
P(u):=\frac{1}{2\sigma+2}\int_0^T|u|^{2\sigma+2}dx
\]
on $H^{\alpha/2}_{\rm a}(0,T)$, which we refer to as the \emph{kinetic} and \emph{potential}
energies, respectively.  For $\alpha>1$ the fNLS \eqref{E:FNLS1} admits 
the conserved quantities
\begin{align*}
\CalH(u) &:= K(u)+P(u)=\frac{1}{2} \int_0^{T} \parens{ \abs{\Lambda^{\alpha/2} u}^2 
			+\frac{1}{\sigma+1} \abs{u}^{2\sigma+2} } dx,\\
Q(u)&:=\frac{1}{2}\int_0^T|u|^2~dx,\qquad N(u):=\frac{i}{2}\int_0^T \overline{\Lambda^{1/2} u}~H\Lambda^{1/2} u~dx
\end{align*}
which we refer to as the \emph{Hamiltonian (energy)}, \emph{charge}, and 
\emph{(angular) momentum}\footnote{On smooth solutions, the momentum
functional is defined as $N(u)=\frac{i}{2}\int_0^T \bar{u}(z)u'(z)dz$.  
Using the identity $\partial_x=H\Lambda$, we may consider $u_x$ to be well-defined
on $H^{\alpha/2}_{\rm a}(0,T)$ in the sense of distributions.}, respectively.  Conservation
of $\mathcal{H}$ comes from the fact that \eqref{E:FNLS1} is autonomous in time, while conservation
of $Q$ and $N$ is due to the phase and translational invariance of \eqref{E:FNLS1}, respectively.
Above, $H$ denotes the Hilbert transform, a bounded linear map from $L^2_{\rm a}(0,T)\to L^2_{\rm a}(0,T)$
with unit norm.

For a general $\alpha\in(1,2]$, it is clear that $\mathcal{H}$, $Q$, and $N$
are smooth functionals on $H^{\alpha/2}_{\rm a}(0,T)$.  Further, their first order variational
derivatives are smooth maps from $H^{\alpha/2}_{\rm a}(0,T)$ into $H^{\alpha/2}_{\rm a}(0,T)^*$
given explicitly by
\[
\delta\mathcal{H}(u)=\Lambda^{\alpha}u+ |u|^{2\sigma}u,\quad\delta Q(u)=u,\quad\delta N(u)=iu_x.
\]
It follows from \eqref{E:profile_equation} that $T$-antiperiodic standing waves of \eqref{E:FNLS1} 
arise as critical points of the Lagrangian functional
\[
H^{\alpha/2}_{\rm a}(0,T)\ni u\mapsto \mathcal{H}(u)+\omega Q(u)+cN(u)
\]
for some $\omega,c\in\mathbb{R}$.
It is natural now to treat the parameters $\omega$ and  $c$ as Lagrange multipliers, and search
for solutions of \eqref{E:profile_equation} as critical points of $\mathcal{H}$ subject
to the conservation of $Q$ and $N$.  However, below we need precise information on the range
of values of $c$ for which such a critical point exists.  Consequently we find it more appropriate
to treat the wave speed as a free-parameter and to attempt to construct solutions of \eqref{E:profile_equation}
for a \emph{fixed} $c$ as critical points of the functional
\[
F_c(u):=\mathcal{H}(u)+cN(u)
\]
subject to fixed $Q$.  The fact that such critical points exist as constrained minimizers is guaranteed
by the following.

\begin{proposition}\label{P:min}
Let $\alpha\in(1,2)$ and $T,\sigma>0$ be fixed in the defocusing ($\gamma=-1$) fNLS \eqref{E:FNLS1}.  
For each $\mu>0$ define the constraint space
\[
\mathcal{A}_\mu:=\left\{u\in H^{\alpha/2}_{\rm a}(0,T):Q(u)=\mu\right\}.
\]
Then for each $\mu>0$ and $|c| < c_* := \left(\frac{\pi}{T}\right)^{\alpha-1}$ there exists a nontrivial $\phi=\phi(\cdot;c,\mu)\in \mathcal{A}_\mu$ such
that 
\[
F_c(\phi)=\min_{u\in\mathcal{A}_\mu}F_c(u),
\]
and $\phi(\cdot;c,\mu)$ satisfies \eqref{E:profile_equation} for some $\omega=\omega(c,\mu)\in\RM$ 
in the sense of distributions. 
Moreover, the function $\phi$ belongs to $H^\infty_{\rm a}(0,T)$ and minimizes the Lagrangian
functional
\begin{equation}\label{E:lag}
\mathcal{E}(u;c,\mu):=\mathcal{H}(u)+\omega(c,\mu)Q(u)+cN(u)
\end{equation}
over $H^{\alpha/2}_{\rm a}(0,T)$ subject to the fixed $Q$ and $N$; specifically
\[
\mathcal{E}(\phi;c,\mu)=
\inf\left\{\mathcal{E}(\psi;c,\mu):\psi\in \mathcal{A}_\mu,~~N(\psi)=N(\phi)\right\}.
\]
\end{proposition}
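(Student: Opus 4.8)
The plan is to obtain $\phi$ by the direct method of the calculus of variations, minimizing $F_c$ over the $L^2$-sphere $\mathcal{A}_\mu$: first establish coercivity (this is where the threshold $c_*$ enters), then extract a minimizer by compactness, read off the profile equation with $\omega$ as a Lagrange multiplier, upgrade its regularity by bootstrapping, and finally observe that the minimizer automatically minimizes the Lagrangian $\mathcal{E}(\cdot;c,\mu)$ under the two constraints $Q=\mu$ and $N=N(\phi)$.

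For the coercivity, note that $P\ge 0$ in the defocusing case, so it suffices to bound the quadratic part $K+cN$ of $F_c$ from below. Expanding $u$ in the antiperiodic Fourier basis $\{e^{i\pi kx/T}\}_{k\ \mathrm{odd}}$ one computes
\[
F_c(u)=\frac{T}{2}\sum_{k\ \mathrm{odd}}\Bigl[\Bigl(\tfrac{\pi|k|}{T}\Bigr)^{\alpha}-\tfrac{\pi ck}{T}\Bigr]|\hat u(k)|^2+P(u),
\]
so the quadratic part is the Fourier-diagonal form with symbol $m_c(k):=(\pi|k|/T)^{\alpha}-\pi ck/T$. Since $|k|\ge 1$ and $1-\alpha<0$,
\[
m_c(k)\ge\Bigl(\tfrac{\pi|k|}{T}\Bigr)^{\alpha}-\tfrac{\pi|c|\,|k|}{T}=\Bigl(\tfrac{\pi|k|}{T}\Bigr)^{\alpha}\Bigl(1-|c|\bigl(\tfrac{\pi|k|}{T}\bigr)^{1-\alpha}\Bigr)\ge\Bigl(1-\tfrac{|c|}{c_*}\Bigr)\Bigl(\tfrac{\pi|k|}{T}\Bigr)^{\alpha},
\]
hence $F_c(u)\ge(1-|c|/c_*)K(u)+P(u)$, which together with $Q(u)=\mu$ bounds $\|u\|_{H^{\alpha/2}_{\rm a}}$ along any minimizing sequence; in particular $\inf_{\mathcal{A}_\mu}F_c\ge(1-|c|/c_*)(\pi/T)^{\alpha}\mu>0$. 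The number $c_*$ is precisely the threshold at which the antiperiodic quadratic form $K+cN$ ceases to be positive definite — the lowest mode $k=1$ (if $c>0$) or $k=-1$ (if $c<0$) acquiring a nonpositive symbol — so the restriction $|c|<c_*$ is exactly what makes this uniform estimate work.

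Since the problem is posed on a compact interval, no concentration-compactness is needed. Given a minimizing sequence $(u_n)\subset\mathcal{A}_\mu$, coercivity produces, along a subsequence, a weak limit $\phi\in H^{\alpha/2}_{\rm a}(0,T)$; as $\alpha>1$, the embeddings $H^{\alpha/2}_{\rm a}(0,T)\hookrightarrow H^{1/2}_{\rm a}(0,T)$ and $H^{\alpha/2}_{\rm a}(0,T)\hookrightarrow L^{2\sigma+2}(0,T)$ are compact, so $u_n\to\phi$ strongly in $H^{1/2}$, in $L^2$, and in $L^{2\sigma+2}$. Hence $Q(\phi)=\mu$ (so $\phi\in\mathcal{A}_\mu$, in particular $\phi\neq 0$), $P(\phi)=\lim_n P(u_n)$, and $N(\phi)=\lim_n N(u_n)$, the last because $N$ is a bounded quadratic form on $H^{1/2}$ ($|N(u)|\lesssim\|u\|_{H^{1/2}}^2$ by $L^2$-boundedness of the Hilbert transform, hence $N$ is $H^{1/2}$-continuous). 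Since $K$ is convex and weakly lower semicontinuous, $F_c(\phi)\le\liminf_n F_c(u_n)=\inf_{\mathcal{A}_\mu}F_c\le F_c(\phi)$, so $\phi$ minimizes. Because $\delta Q(\phi)=\phi\neq 0$, the Lagrange multiplier rule furnishes $\omega=\omega(c,\mu)\in\mathbb{R}$ with $\delta\mathcal{H}(\phi)+c\,\delta N(\phi)+\omega\,\delta Q(\phi)=0$ in $H^{-\alpha/2}_{\rm a}(0,T)$, i.e.\ $\Lambda^{\alpha}\phi+|\phi|^{2\sigma}\phi+ic\phi'+\omega\phi=0$, which is \eqref{E:profile_equation}; pairing against $\phi$ expresses $\omega$ in terms of $K(\phi),P(\phi),N(\phi),\mu$. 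Smoothness then follows by a routine bootstrap of $\phi=(\Lambda^{\alpha}+1)^{-1}\bigl((1-\omega)\phi-ic\phi'-|\phi|^{2\sigma}\phi\bigr)$ using $\partial_x=H\Lambda$: each iteration gains $\alpha-1>0$ derivatives, so $\phi\in H^{\infty}_{\rm a}(0,T)$.

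The $C^1$ dependence of $\phi$ and $\omega$ on $(c,\mu)$ I would deduce from the implicit function theorem — working modulo the (finite-dimensional) symmetry group of \eqref{E:profile_equation} — applied to $(\phi,\omega)\mapsto\bigl(\delta\mathcal{H}(\phi)+c\,\delta N(\phi)+\omega\,\delta Q(\phi),\,Q(\phi)-\mu\bigr)$; its partial derivative in $(\phi,\omega)$ is the bordered linearized operator, whose invertibility on the symmetry-reduced space is exactly the nondegeneracy established in Section~\ref{S:nondegeneracy}, so this part of the statement is really justified a posteriori, with $\omega\in C^1$ then following from its explicit formula in $\phi$. The reinterpretation is immediate: on $\{\psi\in\mathcal{A}_\mu:N(\psi)=N(\phi)\}$ one has $\mathcal{E}(\psi;c,\mu)=F_c(\psi)+\omega(c,\mu)\mu$, so since $\phi$ minimizes $F_c$ over the larger set $\mathcal{A}_\mu$, it also minimizes $\mathcal{E}$ over this doubly-constrained set. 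I expect the two sensitive points to be the sharp coercivity bound — locating the threshold exactly at $c_*=(\pi/T)^{\alpha-1}$, which uses both that the smallest antiperiodic frequency is $\pi/T$ and that $\alpha>1$ — and the $C^1$ claim, whose natural proof is mildly circular, resting on the nondegeneracy proved only afterward.
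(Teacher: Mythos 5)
Your proposal is correct and follows essentially the same route as the paper: coercivity of $F_c$ for $|c|<c_*$ (your Fourier-symbol bound is equivalent to the paper's estimate $|N(u)|\leq (T/\pi)^{\alpha-1}K(u)$), weak compactness combined with the compact embeddings into $L^2$, $L^{2\sigma+2}$ and $H^{1/2}_{\rm a}(0,T)$, the Lagrange multiplier rule, a regularity bootstrap (the paper inverts $\Lambda^\alpha+ic\partial_x$ on zero-mean antiperiodic functions rather than $\Lambda^\alpha+1$, but both gain derivatives per step), and the identity $\mathcal{E}=F_c+\omega Q$ for the final constrained-minimization claim. The $C^1$ dependence of $(\phi,\omega)$ on $(c,\mu)$ that you flag as delicate is simply asserted without argument in the paper's proof as well, so your honest caveat about the mildly circular implicit-function-theorem route does not put you behind the paper's own treatment.
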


\begin{proof}
First, for $\alpha>1$ we observe Cauchy-Schwarz and the continuity of the embedding 
$H^{\alpha/2}_{\rm a}(0,T)\subset H^{1/2}_{\rm a}(0,T)$  imply that
$|N(u)|\leq \left( \frac{T}{\pi} \right)^{\alpha-1} K(u)$ for all $u\in H^{\alpha/2}_{\rm a}(0,T)$.  It follows that 
the functional $F_c$ is bounded below on $\mathcal{A}$ for all $|c|\leq \left( \frac{\pi}{T} \right)^{\alpha-1}$:
\[
F_c(u)=K(u)+P(u)+cN(u)\geq \left( 1 - |c| \left( \frac{T}{\pi} \right)^{\alpha-1} \right)K(u).
\]
Thus, if $|c| \leq \left( \frac{\pi}{T} \right)^{\alpha-1}$ then the quantity $\lambda:=\inf_{u\in\mathcal{A}_\mu}F_c(u)$
is well defined and finite, hence there exists a minimizing sequence $\{u_k\}_{k=1}^\infty\subset\mathcal{A}_\mu$
such that $\lim_{k\to\infty}F_c(u_k)=\lambda$.  Furthermore, using that
$\|u\|_{H^{\alpha/2}(0,T)}^2=2K(u)+2Q(u)$ we have
\[
\frac{1}{2}\left( 1 - \left( \frac{T}{\pi} \right)^{\alpha-1} |c| \right)\|u_k\|_{H^{\alpha/2}(0,T)}^2\leq F_c(u_k) + \left(1 - \left( \frac{T}{\pi} \right)^{\alpha-1} |c| \right)\mu,
\]
from which it follows that if $|c| < \left( \frac{\pi}{T} \right)^{\alpha-1}$ then $\{u_k\}_{k=1}^\infty$ is a bounded sequence
in $H^{\alpha/2}_{\rm a}(0,T)$.  By Banach-Alaoglu and the fact that weak limits are unique, we may thus 
extract a subsequence $\{u_{k_j}\}_{j=1}^\infty$ and a function 
$\phi\in H^{\alpha/2}_{\rm a}(0,T)$ such that $u_{k_j}$ converges weakly to $\phi$ in $H^{\alpha/2}_{\rm a}(0,T)$
and strongly (in norm) to $\phi$ in $L^2_{\rm a}(0,T)$.  Since $H^{\alpha/2}_{\rm a}(0,T)$
is compactly embedded in $L^{2(\sigma+1)}_{\rm a}(0,T)$ for any $\sigma>0$ by Sobolev embedding, we have that
$P$ is a compact operator on $H^{\alpha/2}_{\rm a}(0,T)$.  Together with the fact that $K$ is 
lower semicontinuous with respect to weak convergence in $H^{\alpha/2}_{\rm a}(0,T)$ , the above observations
imply that
\begin{equation}\label{E:prelimest}
\liminf_{j\to\infty}\mathcal{H}(u_{k_j})=\liminf_{j\to\infty}\left(K(u_{k_j})+P(u_{k_j})\right)\geq \mathcal{H}(\phi),
	\quad Q(\phi)=\mu.
\end{equation}
Finally, noting that $H^{\alpha/2}_{\rm a}(0,T)$ is compactly
embedded into $H^{1/2}_{\rm a}(0,T)$ by Rellich-Kondrachov, the sequence $\{u_{k_j}\}$ furthermore
converges to $\phi$ strongly in $H^{1/2}_{\rm a}(0,T)$.  From the estimate
\[
\left|N(u)-N(v)\right| \leq\frac{1}{2}\left(\|u\|_{H^{1/2}(0,T)}+\|v\|_{H^{1/2}(0,T)}\right)\|u-v\|_{H^{1/2}(0,T)},
\]
it follows that $N(u_{k_j})\to N(\phi)$ as $j\to\infty$.  It now follows from \eqref{E:prelimest} 
that $\phi\in\mathcal{A}_\mu$ and
\[
\lambda \leq F_c(\phi)\leq\liminf_{j\to\infty} F_c(u_{k_j})=\lambda,
\]
so that $F_c(\phi)=\lambda$.  Clearly then $\phi$ solves \eqref{E:profile_equation} for some
$\omega\in\RM$.

To see that $\phi(\cdot;c,\mu)$ minimizes $\mathcal{E}(\cdot;\mu ,c)$ over $H^{\alpha/2}_{\rm a}(0,T)$ constrained
to $Q(u)=\mu$ and $N(u)=N(\phi)$, simply observe that $\mathcal{E}=F_c+\omega(c,\mu) Q$ so
that $\phi$ clearly minimizes $\mathcal{E}(\cdot;c,\mu)$ over $\mathcal{A}_\mu$.
In particular, $\mathcal{E}(\phi;c,\mu)\leq\mathcal{E}(\psi;c,\mu)$ for all
$\psi\in\mathcal{A}_\mu$ with $N(\psi)=N(\phi)$.

It remains to establish the smoothness of the solution $\phi$.  By construction, we know
that $\phi\in H^{\alpha/2}_{\rm a}(0,T)$.  To show that $\phi\in H^{\alpha}_{\rm a}(0,T)$, 
notice that for any $|c| < \left( \frac{\pi}{T} \right)^{\alpha-1}$ the profile equation can be written as
\[
-\phi=(\Lambda^{\alpha}+ic\partial_x)^{-1}\left(\omega\phi+|\phi|^{2\sigma}\phi\right),
\]
where the operator $(\Lambda^\alpha+ic\partial_x)^{-1}$ is well-defined since $\phi$ has zero
mean by antiperiodicity.  Since $H^{\alpha/2}_{\rm a}(0,T)\subset L^\infty(0,T)$ by Sobolev 
embedding, the Plancherel theorem yields
\begin{align*}
\left\|\Lambda^\alpha\phi\right\|_{L^2(0,T)}&=\left\|\frac{|n|^{\alpha}}{|n|^{\alpha} - \left( \frac{T}{\pi} \right)^{\alpha-1} cn}\left(\omega\hat\phi(n)
			+\widehat{|\phi|^{2\sigma}\phi}(n)\right)\right\|_{\ell^2(\ZM\setminus\{0\})} \\
&\leq C \left( \|\phi\|_{L^2(0,T)}+\| |\phi|^{2\sigma}\phi \|_{L^2(0,T)} \right) \\
&\leq C \left( \|\phi\|_{L^2(0,T)}+\|\phi\|_{L^\infty(0,T)}^{2\sigma}\|\phi\|_{L^2(0,T)} \right) <\infty,
\end{align*}
for some $C>0$ independent of $\phi$, and hence $\phi\in H^\alpha_{\rm a}(0,T)$.
Similarly, by the fractional chain rule \cite[(3.3)]{HS15} we have
\begin{align*}
\left\|\Lambda^{2\alpha}\phi\right\|_{L^2(0,T)}&\leq C \left( \|\Lambda^\alpha\phi\|_{L^2(0,T)}
		+\left\|\Lambda^\alpha (|\phi|^{2\sigma}\phi) \right\|_{L^2(0,T)} \right) \\
&\leq C \left(	\|\Lambda^\alpha\phi\|_{L^2(0,T)} + \|\phi\|_{L^\infty}^{2\sigma}\|\Lambda^\alpha\phi\|_{L^2(0,T)} \right) < \infty
\end{align*}
for some $C>0$ independent of $\phi$, so that $\phi\in H^{2\alpha}_{\rm a}(0,T)$.  Iterating,
we find that $\phi\in H^\infty_{\rm a}(0,T)$ as claimed.
\end{proof}

To recapitulate, for each $\alpha\in(1,2)$, $\sigma,T>0$, $|c| < c_*$ and $\mu>0$, Proposition \ref{P:min} produces a generally complex-valued function $\phi(\cdot;c,\mu)\in H^{\infty}_{\rm a}(0,T)$ and $\omega(c,\mu)\in\RM$ such 
that
\[
\Lambda^\alpha\phi+\omega(c,\mu)\phi+ic\phi'+|\phi|^{2\sigma}\phi=0,\quad Q(\phi)=\mu.
\]
In particular, incorporating phase and translation invariances, for each half-period $T>0$ we have 
constructed a four-parameter family of generally complex-valued $T$-antiperiodic smooth solutions of the
defocusing fNLS \eqref{E:FNLS1}:
\[
u(x,t;c,\mu,\theta,\zeta)= e^{i(\omega(c,\mu)t-\theta)}\phi(x-ct+\zeta;c,\mu)
\]
where $|c| < c_*$, $\mu>0$, $\theta\in[0,2\pi]$ and $\zeta\in\RM.$
These solutions are parameterized by the wave speed $c$, the charge $Q(u)$ of the wave, and the
parameters $\theta$ and $\zeta$ associated to the continuous Lie point symmetries 
of the governing nonlocal PDE.
As stated at the beginning of this section, our focus in the remainder of the paper is on the
\emph{standing} wave solutions of \eqref{E:FNLS1}, corresponding to the above solutions with $c=0$.
Properties of the functions $\phi$ and $\omega$ at $c=0$ are recorded in the following lemma.

\begin{lemma}\label{L:c=0_prop}
The function $\omega:(-c_*,c_*)\times\RM_+\to\RM$ constructed in 
Proposition \ref{P:min} is even in $c$ and, for each
$\mu>0$, the profile $\phi(\cdot;0,\mu)\in H^\infty_{\rm a}(0,T)$ 
can be taken to be real-valued, even, and strictly decreasing on $(0,T)$.  The profiles $\phi(\cdot;0,\mu)$  additionally satisfy the constrained
variational problem
\[
\mathcal{E}(\phi;0,\mu)=\inf\left\{\mathcal{E}(\psi;0,\mu):\psi\in H^{\alpha/2}_{\rm a}(0,T),~~
Q(\psi)=Q(\phi),~~N(\psi)=0\right\}.
\]
Furthermore, for each $\mu>0$ such that $\phi$ is differentiable in $c$ at $c=0$, the function $\frac{\partial\phi}{\partial c}(\cdot; 0,\mu)$ is purely imaginary.
\end{lemma}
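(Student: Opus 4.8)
My plan is to derive all three assertions from a single symmetry: complex conjugation $u\mapsto\bar u$ is an isometric involution of each constraint set $\mathcal A_\mu$ that preserves $\mathcal H$ and $K$ and reverses the sign of $N$, hence intertwines $F_c$ with $F_{-c}$. \emph{Evenness of $\omega$.} Since $u\mapsto\bar u$ maps $\mathcal A_\mu$ onto itself with $F_{-c}(\bar u)=F_c(u)$, the minimum value $\lambda(c,\mu):=\min_{\mathcal A_\mu}F_c$ is even in $c$. Differentiating $\lambda(c,\mu)=F_c(\phi(\cdot;c,\mu))$ along the $C^1$ family of Proposition \ref{P:min} and using the Euler--Lagrange identity $\delta F_c(\phi)=-\omega(c,\mu)\,\delta Q(\phi)$ together with $Q(\phi(\cdot;c,\mu))\equiv\mu$ yields the envelope identity $\partial_\mu\lambda(c,\mu)=-\omega(c,\mu)$, so $\omega$ inherits the evenness of $\lambda$ in $c$.

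\emph{Real, even, monotone profile at $c=0$.} Here $F_0=\mathcal H=K+P$. Starting from any minimizer $\phi$ of $\mathcal H$ over $\mathcal A_\mu$ supplied by Proposition \ref{P:min}, I would replace it by its \emph{antiperiodic symmetric-decreasing rearrangement} $\phi^{\sharp}$: the real-valued $T$-antiperiodic function that is even, nonincreasing on $(0,T)$, vanishes at $T/2$, and whose modulus is the symmetric-decreasing rearrangement on the circle $\mathbb R/T\mathbb Z$ of the $T$-periodic function $|\phi|$. Then $Q(\phi^{\sharp})=\mu$ and $P(\phi^{\sharp})=P(\phi)$, while $K(\phi^{\sharp})\le K(\phi)$ by the diamagnetic bound $\|\Lambda^{\alpha/2}|\phi|\|_{L^2}\le\|\Lambda^{\alpha/2}\phi\|_{L^2}$ followed by the antiperiodic fractional P\'olya--Szeg\H{o} inequality of Appendix \ref{A:Polya}; hence $\phi^{\sharp}\in\mathcal A_\mu$ is again a minimizer, and one may take $\phi(\cdot;0,\mu)$ real-valued, even, and nonincreasing on $(0,T)$. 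Such a profile has $N(\phi)=\tfrac i4(\phi(T)^2-\phi(0)^2)=0$, so the final conclusion of Proposition \ref{P:min}, applied to this choice, yields the stated constrained variational characterization with the extra constraint $N(\psi)=0$. For the strict monotonicity I would differentiate the profile equation to see that $\phi':=\partial_x\phi(\cdot;0,\mu)$ lies in the kernel of $L_{+}:=\Lambda^\alpha+\omega(0,\mu)+(2\sigma+1)\phi(\cdot;0,\mu)^{2\sigma}$, is odd and $T$-antiperiodic, and satisfies $\phi'\le0$ on $(0,T)$; restricting $L_+$ to the subspace of odd $T$-antiperiodic functions --- on which the associated semigroup is positivity improving by the ground-state theory of Section \ref{S:nondegeneracy} (cf.\ Theorem \ref{T:ground}) --- a Perron--Frobenius argument forces any sign-definite kernel element to be a ground state, hence nonvanishing on $(0,T)$, so $\phi'<0$ there since $\phi$ is nonconstant. (Alternatively, one rules out a plateau $\phi\equiv M$ directly: the singular-integral formula for $\Lambda^\alpha$ cannot keep $\int_{|y-x|\gg1}\frac{M-\phi(y)}{|x-y|^{1+\alpha}}\,dy$ constant in $x$ near an endpoint of the plateau.)

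\emph{Imaginary $\partial_c\phi$ at $c=0$.} The same conjugation symmetry shows that, for $c$ near $0$, both $\phi(\cdot;c,\mu)$ and $\overline{\phi(\cdot;-c,\mu)}$ minimize $F_c$ over $\mathcal A_\mu$, solve \eqref{E:profile_equation} with the same (even) multiplier $\omega(c,\mu)$, and agree at $c=0$ with the real profile $\phi(\cdot;0,\mu)$. Granting that the minimizing branch is unique up to the continuous gauge and translation symmetries --- equivalently, once $\delta^2\mathcal E(\phi(\cdot;0,\mu))$ is known to have two-dimensional kernel, i.e.\ by the nondegeneracy of Section \ref{S:nondegeneracy} or by a direct uniqueness argument near $c=0$ --- there are $C^1$ functions $\beta,\tau$ with $\beta(0)=\tau(0)=0$ and $\overline{\phi(\cdot;-c,\mu)}=e^{i\beta(c)}\,\phi(\cdot-\tau(c);c,\mu)$. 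Writing $\partial_c\phi(\cdot;0,\mu)=p+iq$ with $p,q$ real and $\dot{}$ for $d/dc$ at $c=0$, differentiation of this identity gives, in its real and imaginary parts, $p=\tfrac12\dot\tau\,\partial_x\phi(\cdot;0,\mu)$ and $\dot\beta\,\phi(\cdot;0,\mu)=0$; thus $\dot\beta=0$ and $p$ is a multiple of the translational mode, and composing the family with a $C^1$-in-$c$ translation to arrange $\dot\tau=0$ (which disturbs none of the earlier conclusions) leaves $p\equiv0$, i.e.\ $\partial_c\phi(\cdot;0,\mu)$ purely imaginary.

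The two places where I expect the real work to lie are the strict-monotonicity step --- which needs the positivity-improving property on the odd antiperiodic subspace, or in the alternative route a careful nonlocal maximum-principle estimate at the ends of a would-be plateau --- and the uniqueness-up-to-symmetry input in the last step, which is essentially the content of the nondegeneracy theorem and so must be invoked from Section \ref{S:nondegeneracy} (or reproved near $c=0$) at the right moment. The remaining manipulations --- the conjugation symmetry, the envelope identity $\partial_\mu\lambda=-\omega$, and the rearrangement inequalities of Appendix \ref{A:Polya} --- are routine.
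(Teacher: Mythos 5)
Your overall architecture matches the paper's: both proofs rest on the conjugation symmetry $F_{-c}(\bar u)=F_c(u)$ of the constrained problem and on the antiperiodic rearrangement inequalities of Appendix \ref{A:Polya} for realness, evenness, and monotonicity, with $N\equiv 0$ on real antiperiodic functions giving the variational characterization. Two of your steps diverge in route. For the evenness of $\omega$, you pass through the minimal value $\lambda(c,\mu)=\min_{\mathcal A_\mu}F_c$ and the envelope identity $\partial_\mu\lambda=-\omega$; the paper instead writes the Euler--Lagrange equations for $\phi(\cdot;c,\mu)$ and $\overline{\phi}(\cdot;c,\mu)=\phi(\cdot;-c,\mu)$, pairs the first with $\bar\phi$ and the second with $\phi$, and subtracts to get $(\omega(c,\mu)-\omega(-c,\mu))\int|\phi|^2=0$. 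Your version is arguably cleaner in that it needs no identification of branches, only evenness of the minimal value; the paper's is more elementary but implicitly selects the branch $\phi(\cdot;-c,\mu)=\overline{\phi(\cdot;c,\mu)}$. Your strict-monotonicity discussion (Perron--Frobenius on the odd antiperiodic sector forcing the sign-definite kernel element $\phi'$ of $L_+$ to be the odd ground state) is more explicit than the paper's proof, which only cites the rearrangement arguments and in effect defers strictness to Lemma \ref{L:specprop}; since Theorem \ref{T:ground} does not depend on Lemma \ref{L:c=0_prop}, this forward reference is harmless.

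The one place where your route is genuinely weaker is the final claim that $\partial_c\phi(\cdot;0,\mu)$ is purely imaginary. You derive it from the identity $\overline{\phi(\cdot;-c,\mu)}=e^{i\beta(c)}\phi(\cdot-\tau(c);c,\mu)$, which requires that the minimizer be unique up to the symmetry group --- a statement proved nowhere in the paper, and which does not follow immediately from nondegeneracy of $\delta^2\mathcal{E}(\phi)$ without an additional implicit-function-theorem argument identifying nearby critical orbits. Invoking Section \ref{S:nondegeneracy} here also inverts the paper's logical order (Lemma \ref{L:specprop} cites Lemma \ref{L:c=0_prop}); as it happens the purely-imaginary claim is not what Lemma \ref{L:specprop} actually consumes from \ref{L:c=0_prop}, so this is not strictly circular, but it is fragile and unnecessary. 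The paper's argument is two lines: having fixed the branch so that $\phi(\cdot;-c,\mu)=\overline{\phi(\cdot;c,\mu)}$, the function $\Re\,\phi(\cdot;c,\mu)=\tfrac12\bigl(\phi(\cdot;c,\mu)+\phi(\cdot;-c,\mu)\bigr)$ is even in $c$, so its $c$-derivative at $c=0$ vanishes, i.e. $\Re\bigl(\partial_c\phi(\cdot;0,\mu)\bigr)=0$. You should replace your uniqueness-based argument with this observation; everything else in your write-up stands.
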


\begin{remark}
In the classical case $\alpha=2$, the periodic waves may be shown through ODE (quadrature) techniques to
be continuously differentiable in both $\mu$ and $c$.  Furthermore, when $\alpha=2$
the evenness of $\omega(c,\mu)$ in $c$ and the realness of the
function $i\frac{\partial\phi}{\partial c}$ follow trivially from the Galilean 
invariance \eqref{E:galilean}.
\end{remark}

\begin{proof}
By construction, we know that 
\begin{equation}\label{p1}
\Lambda^\alpha\phi+\omega(c,\mu)\phi+ic\phi'+|\phi|^{2\sigma}\phi=0.
\end{equation}
Since $F_c(g)=F_{-c}(\bar{g})$ for all $g\in H^{\alpha/2}_a(0,T)$, it follows that $\bar\phi$ minimizes $F_{-c}$ with the same fixed $Q$.
Consequently, we have that
\begin{equation}\label{p2}
\Lambda^\alpha\overline\phi+\omega(-c,\mu)\overline\phi-ic\overline\phi'+|\overline\phi|^{2\sigma}\overline\phi=0.
\end{equation}
Multiplying \eqref{p1} by $\overline\phi$ and \eqref{p2} by $\phi$, integrating, and subtracting the results
we find
\[
\left(\omega(c,\mu)-\omega(-c,\mu)\right)\int_0^T |\phi|^2dx=0
\]
from which it follows that $\omega$ is an even function of $c$, as claimed.   Similarly, we find
\[
\Re \left( \phi(\cdot; c,\mu) \right) = \frac{1}{2}\left(\phi(\cdot; c,\mu)+\phi(\cdot; -c,\mu)\right),
\]
which is an even function of $c$.  Differentiating this relation with respect to $c$ at $c=0$
yields, assuming the derivative exists, $\Re\left(\frac{\partial\phi}{\partial c}(\cdot; 0,\mu)\right)=0$.  Similarly, we find that
$\Im \left( \phi(\cdot; c,\mu) \right)$ is an odd function of $c$ and hence 
$\phi(\cdot;0,\mu)$ is necessarily real-valued.
The parity and monotonicity of $\phi(\cdot;0,\mu)$ now follow from the $2T$-periodic rearrangement arguments outlined
in Appendix \ref{A:Polya}, while the fact that $\phi(\cdot;0,\mu)$ satisfies the stated
constrained minimization problem follows trivially from Proposition \ref{P:min} by observing that $N(f)=0$
for all real-valued $f\in H^{\alpha/2}_{\rm a}(0,T)$.  
\end{proof}

In conclusion, for fixed $\alpha\in(1,2)$ and $\sigma>0$, we have constructed for each $T>0$
a three-parameter family of \emph{real-valued, $T$-antiperiodic, even} solutions
of the profile equation \eqref{E:profile_equation} with $c=0$.  These profiles lead
to a three-parameter family of \emph{standing wave} solutions of \eqref{E:FNLS1} of the form
\[
u(x,t;\mu,\theta,\zeta) = e^{i(\omega(0,\mu)t+\theta)}\phi(x+\zeta;0,\mu)
\]
In the forthcoming analysis, we will restrict our attention to these real-valued profiles
with $c=0$.  However, as stated previously, the fact that such solutions belong to a larger class of complex-valued
traveling waves will be used heavily in the forthcoming analysis; see Remark \ref{r:travel}.  For notational simplicity,
in the sequel we will suppress the dependence of $\omega$ and $\phi$ on the wave speed
$c$ whenever it is clear from context that $c=0$.

\section{Nondegeneracy of the Linearization in the Defocusing Case}  \label{S:nondegeneracy}

Throughout this section, for each $\mu>0$ we let $\phi(x;\mu)$ denote
a real-valued, even, $T$-antiperiodic standing wave solution of 
the nonlocal profile equation \eqref{E:profile_equation}
with $c=0$ satisfying $Q(\phi)=\mu$, whose existence is guaranteed by Proposition \ref{P:min} and Lemma \ref{L:c=0_prop}, so that the function $u(x,t;\mu)=e^{i\omega(\mu)t}\phi(x;\mu)$ is a $T$-antiperiodic standing
wave solution of the defocusing ($\gamma=-1$) fNLS \eqref{E:FNLS1}
Moving to a co-rotating coordinate frame, the profile $\phi(\cdot;\mu)$ is thus 
a real-valued, $T$-antiperiodic equilibrium solution of the PDE
\begin{equation}\label{E:FNLSrot}
iu_t-\omega(\mu) u-\Lambda^\alpha u+\gamma|u|^{2\sigma}u=0,
\end{equation}
which can be rewritten as the Hamiltonian system
\[
u_t=-i \, \delta \mathcal{E}(u;0,\mu)
\]
acting on $L^2_{\rm per}(0,2T)$, where here $\mathcal{E}$ is the modified energy functional defined in \eqref{E:lag}.
For such Hamiltonian systems, it is well known that the local 
dynamics of \eqref{E:FNLSrot} near $\phi$, in particular its orbital stability
or instability, is intimately related to spectral properties of the second variation
of the energy functional
\begin{equation}\label{E:Hessian}
\delta^2 \mathcal{E}(\phi;c,\mu)=\Lambda^{\alpha} +\omega(\mu)+ic\partial_x- \gamma\abs{\phi}^{2\sigma} 
	- 2\gamma\sigma \abs{\phi}^{2\sigma-2}\phi\Re(\bar{\phi}\cdot)
\end{equation}
acting on appropriate subspaces of $L^2_{\rm per}(0,2T)$.  
Of particular importance, observe that the $T$-antiperiodicity of $\phi$ implies that the 
operator $\delta^2 \mathcal{E}(\phi)$ has $T$-periodic coefficients.  As we will see below,
however, the continuous Lie point symmetries of \eqref{E:FNLSrot} generate elements
of the kernel of $\delta^2 \mathcal{E}(\phi)$ that are $T$-antiperiodic, and hence zero 
is an isolated eigenvalue of $\delta^2 \mathcal{E}(\phi)$ acting on $L^2_{\rm a}(0,T)$
with finite multiplicity.  In the forthcoming analysis, we restrict our attention
to $T$-antiperiodic 
perturbations\footnote{This restriction is indeed quite strong.  However, we will see that it is necessary in order to employ
the techniques of Grillakis-Shatah and Strauss \cite{GSS1,GSS2} to conclude orbital stability.}
of the underling wave $\phi$, thus requiring 
a detailed spectral analysis of the operator $\delta^2 \mathcal{E}(\phi)$ acting on $L^2_{\rm a}(0,T)$.

To aid in our analysis, we find it convenient to decompose the action of $\delta^2 \mathcal{E}(\phi)$ into 
real and imaginary parts.  Restricting our attention to the real-valued, stationary ($c=0$) solutions
$\phi$ constructed in Lemma \ref{L:c=0_prop}, the operator $\delta^2\mathcal{E}(\phi)$ acts as a diagonal
operator on $L^2_{\rm a}(0,T)$.  Indeed, for a given $v\in H^\alpha_{\rm a}(0,T)$, decomposing 
$v=a+bi$ for $a,b$ real-valued we can write
\[
\delta^2 \mathcal{E}(\phi)v=L_+a + iL_-b
\]
where the operators $L_{\pm}$ are linear operators acting on $L^2_{\rm a}([0,T];\RM)$
defined by
\begin{align}
L_+ &:= \Lambda^\alpha - \gamma(2\sigma+1)\phi^{2\sigma} + \omega  \label{D:Lplus} \\
L_- &:= \Lambda^\alpha -\gamma\phi^{2\sigma} + \omega.  \label{D:Lminus}
\end{align}
Consequently, we can consider $\delta^2 \mathcal{E}(\phi)$ as the matrix operator ${\rm diag}(L_+,L_-)$
acting on the product space $L^2_{\rm a}([0,T];\RM)^2$.  Concerning the spectrum of $\delta^2 \mathcal{E}(\phi)$, observe that $\delta^2 \mathcal{E}(\phi)$ is bounded below
and self-adjoint on $L^2_{\rm a}(0,T)$ with compactly embedding domain $H^{\alpha}_{\rm a}(0,T)$.  
Consequently, the spectrum of $\delta^2 \mathcal{E}(\phi)$, and hence of the operators $L_{\pm}$,
acting on $L^2_{\rm a}(0,T)$
is comprised of a countably infinite discrete set of real eigenvalues tending to real $+\infty$ with no finite accumulation point.  Key information in the stability analysis of $\phi$ now rests
on determining the number of negative $T$-antiperiodic eigenvalues of $\delta^2 \mathcal{E}(\phi)$, referred
to as the \emph{Morse index} of the operator $\delta^2 \mathcal{E}(\phi)$, as well
as a characterization of its $T$-antiperiodic kernel.

Note that, by the translation and phase invariance of \eqref{E:FNLSrot},
it is straightforward to verify that
\[
L_+\phi'=0\quad\textrm{and}\quad L_-\phi=0,
\]
and hence $\phi'$ and $\phi$ belong to the $T$-antiperiodic kernel of the $T$-periodic coefficient
operators $L_+$ and $L_-$, respectively.
In general, it is very difficult to determine whether or not these functions comprise
the entirety of the $T$-antiperiodic kernels.  Indeed, it is not difficult to construct examples
in the local case $\alpha=2$ where either $L_{\pm}$ have kernels with higher multiplicity.
In the antiperiodic case, the issue is even further complicated by the fact that standard Perron-Frobenius
arguments fail to characterize the ground state eigenvalues of $\delta^2 \mathcal{E}(\phi)$ on $L^2_{\rm a}(0,T)$.
Indeed, even in the local case $\alpha=2$ it is not difficult to construct examples where
the first antiperiodic eigenvalue of a Schr\"odinger operator with periodic potential has algebraic multiplicity
two\footnote{See, for example, Lemma 1 in \cite{BJK11} and Lemma 4.2 in \cite{MJ09} for criteria encountered in the analysis of a KdV-type equation.}.
Furthermore, determining the number of $T$-antiperiodic negative eigenvalues of $\delta^2 \mathcal{E}(\phi)$
is often handled by classical Sturm-Liouville type arguments.  Indeed, in the classical case $\alpha=2$ the fact
that both $\phi$ and $\phi'$ have roots in $[0,T)$ implies that $\lambda=0$ may be either the first
or second $T$-antiperiodic eigenvalue of $L_{\pm}$, and hence a-priori the operator $\delta^2 \mathcal{E}(\phi)$
may have at most \emph{two} negative $T$-antiperiodic eigenvalues, which is typically an unfavorable
energy configuration for orbital stability.  When $\alpha\in(1,2)$, classical Sturm-Liouville arguments
do not apply to the operators $L_{\pm}$ and hence new methods will be necessary to determine
the number of negative eigenvalues of $\delta^2 \mathcal{E}(\phi)$.  Nevertheless, the main result for this section is the following.

\begin{proposition}[Nondegeneracy \& Morse Index Bounds]\label{P:nondegeneracy}
Let $\alpha \in (1,2)$ and $\sigma>0$ in the defocusing ($\gamma=-1$) fNLS \eqref{E:FNLS1}.
Let $\phi(\cdot;\mu):=\phi(\cdot,c=0,\mu)\in H^{\alpha/2}_{\rm a}(0,T)$ be a real-valued local minimizer
of $\mathcal{H}$ over $H^{\alpha/2}_{\rm a}(0,T)$ subject to fixed $Q(u)=\mu>0$ and
$N(u)=0$, as constructed in Lemma \ref{L:c=0_prop}, and assume that $\phi$ and the 
associated Lagrange multiplier $\omega$ 
depend on $c$ in a $C^1$ manner near $c=0$.  Then the associated Hessian operator $\delta^2 \mathcal{E}(\phi)$ acting on $L^2_{\rm a}(0,T)$ is nondegenerate,
i.e.
\[
\ker(\delta^2 \mathcal{E}(\phi))={\rm span}\left\{\phi',i\phi\right\}
\]
and\footnote{Here and throughout, for a given linear operator $\mathcal{L}$ on $L^2_{\rm a}(0,T)$ we
denote the Morse index of $\mathcal{L}$ as 
$n_-(\mathcal{L}):=\#\left\{\lambda\in\sigma_{L^2_{\rm a}(0,T)}(\mathcal{L}):\lambda<0\right\}$.} 
$n_-(\delta^2 \mathcal{E}(\phi))=1$.  Specifically, the operators $L_{\pm}$ are nondegenerate acting on $L^2_{\rm a}(0,T)$ with
\[
\ker\left(L_+\right)={\rm span}\{\phi'\}\quad\textrm{and}\quad\ker(L_-)={\rm span}\{\phi\}
\]
and, further, we have $n_-(L_+)=0$ and $n_-(L_-)=1$.
\end{proposition}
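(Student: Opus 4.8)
The plan is to reduce everything to two separate scalar questions about the fractional Schr\"odinger operators $L_+$ and $L_-$ acting on $L^2_{\rm a}(0,T)$, exploiting the diagonal decomposition $\delta^2\mathcal{E}(\phi)={\rm diag}(L_+,L_-)$ recorded above. Since $\ker(\delta^2\mathcal{E}(\phi))=\ker(L_+)\oplus i\ker(L_-)$ and $n_-(\delta^2\mathcal{E}(\phi))=n_-(L_+)+n_-(L_-)$, it suffices to prove (i) $\ker(L_+)={\rm span}\{\phi'\}$ with $n_-(L_+)=0$, and (ii) $\ker(L_-)={\rm span}\{\phi\}$ with $n_-(L_-)=1$. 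Throughout I would use that $\phi$ is real-valued, even, $T$-antiperiodic and strictly decreasing on $(0,T)$ (Lemma~\ref{L:c=0_prop}), so $\phi>0$ on $(-T/2,T/2)$ has no zeros there while $\phi'$ vanishes exactly at the endpoints of a half-period; equivalently, on one antiperiodic cell $\phi$ has no sign change and $\phi'$ has exactly one (simple) sign change. The potentials in $L_\pm$ are $-\gamma(2\sigma+1)\phi^{2\sigma}+\omega=(2\sigma+1)\phi^{2\sigma}+\omega$ and $\phi^{2\sigma}+\omega$ respectively (recall $\gamma=-1$), which are real-valued, $T$-periodic, and even, and monotone on $(0,T)$ since $\phi$ is.

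First I would handle $L_-$. Because $L_-\phi=0$ and $\phi$ is strictly positive on $(-T/2,T/2)$, $\phi$ is a nowhere-vanishing element of $L^2_{\rm a}(0,T)$; restricting $L_-$ to the \emph{even} antiperiodic subspace, the ground state theory of Theorem~\ref{T:ground} (the Perron--Frobenius/positivity-improving argument on even and odd antiperiodic subspaces) identifies $\phi$ as the ground state of $L_-$ on the even subspace, forcing its eigenvalue there to be $0$ and simple. Proposition~\ref{P:gsordering}, combined with the monotonicity of the potential $\phi^{2\sigma}+\omega$ inherited from $\phi$, then pins down the ordering of the even and odd antiperiodic ground states of $L_-$; this tells us whether the odd ground state lies above or below $0$. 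Since $L_-$ is the \emph{positive} operator obtained from $L_+$ by replacing the nonlinear coefficient, and since $\phi$ being a constrained minimizer of $\mathcal H$ with the minimality statement in Proposition~\ref{P:min} forces $\delta^2\mathcal E(\phi)\ge 0$ on a codimension-two subspace, one gets $L_-\ge 0$, so $n_-(L_-)=0$ would contradict the minimality structure needed for a single negative direction overall — I would instead argue directly that the odd antiperiodic ground state of $L_-$ is strictly negative using the monotonicity and the strict inequality in the rearrangement (P\'olya--Szeg\"o) step, giving $n_-(L_-)=1$; the oscillation theory of Lemma~\ref{L:oscillation} then shows every other antiperiodic eigenfunction has at least two sign changes, so $0$ cannot be an eigenvalue again, hence $\ker(L_-)={\rm span}\{\phi\}$.

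Next I would treat $L_+$. Here $L_+\phi'=0$ and $\phi'$ has exactly one sign change in an antiperiodic cell and is \emph{odd}. By the oscillation theory (Lemma~\ref{L:oscillation}), an antiperiodic eigenfunction with exactly one sign change must be either the first or the second antiperiodic eigenfunction; combined with the even/odd ground state characterization of Theorem~\ref{T:ground}, $\phi'$ is precisely the odd antiperiodic ground state of $L_+$. The ordering Proposition~\ref{P:gsordering}, applied to $L_+$ whose potential $(2\sigma+1)\phi^{2\sigma}+\omega$ is monotone on $(0,T)$ with the \emph{same} monotonicity as that of $L_-$'s potential, then places the even antiperiodic ground state of $L_+$ \emph{below} the odd one, i.e. below the eigenvalue $0$ associated to $\phi'$; but I also need this even ground state eigenvalue to be exactly $0$ — which follows because $L_+\ge L_-$ pointwise-in-quadratic-form is false (the coefficient is larger, so $L_+\le L_-$ as quadratic forms), hence $n_-(L_+)\ge n_-(L_-)$; pairing this with the overall bound $n_-(\delta^2\mathcal E(\phi))=n_-(L_+)+n_-(L_-)\le 1$ coming from the constrained-minimizer property (Proposition~\ref{P:min}: $\phi$ minimizes $\mathcal E$ subject to two constraints, so $\delta^2\mathcal E(\phi)$ has at most two nonpositive directions, two of which are the kernel elements $\phi'$ and $i\phi$) forces $n_-(L_-)=1$, $n_-(L_+)=0$. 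With $n_-(L_+)=0$ and $L_+\phi'=0$, the function $\phi'$ is the antiperiodic ground state of $L_+$ as a whole; the oscillation theory again shows any further antiperiodic kernel element would need more sign changes than allowed at ground-state level, giving $\ker(L_+)={\rm span}\{\phi'\}$. Assembling, $\ker(\delta^2\mathcal E(\phi))={\rm span}\{\phi',i\phi\}$ and $n_-(\delta^2\mathcal E(\phi))=1$.

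The main obstacle, as the introduction foreshadows, is the ground-state step for the \emph{antiperiodic} problem: unlike the periodic case, the first antiperiodic eigenvalue of a periodic-coefficient operator need not be simple, so a naive Perron--Frobenius argument fails. The resolution is to split $L^2_{\rm a}(0,T)$ into even and odd antiperiodic subspaces, show the heat semigroup $e^{-tL_\pm}$ is positivity improving on each (Theorem~\ref{T:ground}), and then deduce simplicity of the even and odd ground states separately — and crucially to determine their \emph{relative order} (Proposition~\ref{P:gsordering}) via the antiperiodic rearrangement inequalities of Appendix~\ref{A:Polya}, which is exactly where the monotonicity of $\phi$ on $(0,T)$ enters. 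The second delicate point is upgrading the non-strict Morse-index inequality $n_-(L_+)\ge n_-(L_-)$ and the constrained-minimizer bound into the exact values $0$ and $1$; I expect to need the strict version of the P\'olya--Szeg\"o inequality (so that a minimizer cannot be the constant, ruling out degenerate cases) together with the fact that $\phi'\not\equiv 0$ and changes sign, to place $0$ at exactly the second antiperiodic eigenvalue of $L_+$ and the first of $L_-$ restricted to the appropriate parity sectors.
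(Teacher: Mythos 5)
Your skeleton is the right one -- the diagonal decomposition $\delta^2\mathcal E(\phi)=\operatorname{diag}(L_+,L_-)$, the even/odd splitting of $L^2_{\rm a}(0,T)$, the Perron--Frobenius characterization of the two sector ground states, the ordering via Proposition~\ref{P:gsordering}, and the oscillation bound for second eigenfunctions -- but the proposal is missing the single mechanism that actually closes every one of the remaining cases in the paper's proof: Fredholm-alternative arguments against \emph{explicit elements of the ranges} of $L_\pm$ obtained by differentiating the profile equation. Concretely, the paper uses $L_+\phi=2\sigma\phi^{2\sigma+1}$ (so the sign-definite function $\phi^{2\sigma+1}$ lies in $\range(L_+)$ and kills any putative even kernel element of $L_+$, which would be sign-definite on $(-T/2,T/2)$ by Theorem~\ref{T:ground}); $L_-\Im(\partial_c\phi)=-\phi'$ (so $\phi'\in\range(L_-)$ -- this is exactly where the embedding into the traveling-wave family with $c\neq0$ from Proposition~\ref{P:min} is indispensable, see Remark~\ref{r:travel} -- and this is how the paper proves $n_-(L_-)\ge1$: if $L_-\ge0$ the odd ground state of $L_-$ would be a zero-eigenvalue, sign-definite function not orthogonal to $\phi'$); and $L_-\phi'=-2\sigma\phi^{2\sigma}\phi'$, which supplies the test function $\eta=\phi'\bigl(\phi^{2\sigma}-\phi(x_0)^{2\sigma}\bigr)\in\range(L_-)$ matching the nodal pattern of an odd kernel element with two sign changes. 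Without these range elements you have no way to exclude an even kernel element of $L_+$, no way to produce the negative eigenvalue of $L_-$, and no way to exclude an odd kernel element of $L_-$.

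Several of the substitutes you propose in their place are incorrect. (i) Lemma~\ref{L:oscillation} is an \emph{upper} bound on sign changes of second eigenfunctions; it does not show that ``every other antiperiodic eigenfunction has at least two sign changes,'' and by itself it never rules out $0$ recurring as an eigenvalue -- the paper always pairs it with a Fredholm obstruction. (ii) Your quadratic-form comparison is backwards: in the defocusing case $L_+=L_-+2\sigma\phi^{2\sigma}\ge L_-$, so $n_-(L_+)\le n_-(L_-)$, not $\ge$; as written your inequality chain would force $n_-(L_+)\ge n_-(L_-)$ and the bookkeeping collapses. (iii) The ordering from Proposition~\ref{P:gsordering}(i) for the nonincreasing potential $(2\sigma+1)\phi^{2\sigma}+\omega$ on $(0,T/2)$ places the \emph{odd} ground state of $L_+$ (which is $0$, attained by $\phi'$) \emph{below} the even one -- this is precisely how the paper concludes $L_+\ge0$ -- whereas you assert the even one sits below the odd one. (iv) The rearrangement inequalities of Appendix~\ref{A:Polya} are non-strict, so the claim that a ``strict P\'olya--Szeg\"o'' step yields a strictly negative odd ground state for $L_-$ is unsubstantiated; the paper does not argue this way, and making it rigorous would require a separate equality-case analysis that you have not supplied. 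Also, the constrained-minimizer second-derivative test over a codimension-two constraint only gives $n_-(L_+)\le1$ and $n_-(L_-)\le1$ separately (via the realness of $\delta Q(\phi)$ and imaginariness of $\delta N(\phi)$), not $n_-(L_+)+n_-(L_-)\le1$ directly.
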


\begin{remark}
In the above proposition, we must view the minimizer $\phi(\cdot;\mu)$ as a member of a larger family
of traveling waves $\phi(\cdot;c,\mu)$ with $\phi(\cdot;c=0,\mu)=\phi(\cdot;\mu)$.  
In the local case when $\alpha=2$, this embedding and the smoothness of $\phi$ and $\omega$ 
on $(c,\mu)$ hold trivially due to ODE techniques and the presence of an exact Galilean symmetry.  
Furthermore, stability results for solitary and periodic waves in the local context fundamentally 
rely on the ability to differentiate $\phi$ and $\omega$ with respect to parameters, allowing 
one to connect the geometry and smoothness of the manifold of solutions to the associated stability theory;
see, for example, the discussion immediately following the proof of Proposition \ref{P:nondegeneracy} below.
In this nonlocal context, however, we have yet to obtain this smoothness result, which seems akin to a 
local uniqueness result for the minimization problem in Proposition \ref{P:min}.  We consider
this an interesting direction for future research.  Since our methods, similarly to the analysis
in the local case $\alpha=2$, depend heavily
on $\phi$ and $\omega$ depending smoothly on $c$ near $c=0$, we will take this as an assumption
in the subsequent analysis;  see also
Remark \ref{r:travel} below.  Furthermore,
it is important to note that even in cases where such smooth dependence is a-priori given, nondegeneracy is far from a foregone conclusion
and still represents a formidable problem.   Finally, we note that while the smooth dependence of $\phi$ and $\omega$
on $\mu$ is not needed for this nondegeneracy theory, it will play a role in the forthcoming stability theory.  See Remark \ref{r:mu_stability} below.
\end{remark}

\begin{remark}
At this point, the reason for our restriction to antiperiodic perturbations begins to become clear.  If we would consider the stability of a $T$-antiperiodic wave $\phi(\cdot,c=0,\mu)$
to the more ``natural" class of co-periodic, i.e.  $2T$-periodic, perturbations the forthcoming analysis then shows that $L_{\pm}$ would then 
each have \emph{at least} one negative eigenvalue.  Indeed, by Lemma \ref{L:properties_of_Kp} below it follows that the ground state $2T$-periodic eigenvalues of $L_{\pm}$ must be
sign-definite.  Since $\phi'$ and $\phi$ belong to the kernels of $L_+$ and $L_-$, respectively, the fact that neither of these functions are sign-definite imply that both operators
$L_{\pm}$ must have at least one negative eigenvalue, violating the structural hypotheses necessary for the application of the stability theory in \cite{GSS1,GSS2}: see 
Remark \ref{R:coperiodic_groundstate} below.
In the case $\alpha=2$, this was previously observed in  \cite{Pava07}, where the author found that $L_+$ has at least two negative co-periodic eigenvalues, while $L_-$ has at least
one.  
\end{remark}

As noted in the introduction, Proposition \ref{P:nondegeneracy} was established using ODE techniques in the local
case $\alpha=2$ by Gallay and Haragus \cite{GH07}.  Precisely, their proof utilizes Sturm-Liouville theory
for (local) differential operators, together with a homotopy argument and
a-priori control over the dimension of the $T$-antiperiodic
kernels.  While these ODE-based techniques are not directly available in the nonlocal setting $\alpha\in(0,2)$,
we recall that Frank and Lenzmann \cite{FL13} recently obtained the nondegeneracy of the linearization about 
solitary waves for a family of nonlinear nonlocal models that include the \emph{focusing} ($\gamma=+1$) fNLS
\eqref{E:FNLS1}.  Their idea was to find a suitable substitute for the Sturm-Liouville oscillation theory to control
the number of sign changes in eigenfunctions for a fractional Schr\"odinger operator with real, localized potential.
This theory, developed on the line, was then adapted to the periodic setting in \cite{HJ15}, where the authors
considered the nonlinear orbital stability of $T$-periodic traveling wave solutions to the fractional KdV 
equation.  

The proof of Proposition \ref{P:nondegeneracy} extends these previous nondegeneracy results to encompass
the \emph{$T$-antiperiodic spectra} of fractional Schr\"odinger operators with real, $T$-periodic potentials.
As mentioned above, this extension is significant as, even in the classical $\alpha=2$ case, the ground state
antiperiodic eigenvalue of $T$-periodic linear Schr\"odinger type operators need not be simple.
This is in stark contrast to the ground state $T$-periodic eigenvalues of such operators, which
are \emph{always simple} by Perron-Frobenius theory.  While our proof follows the basic strategy
in \cite{FL13} and \cite{HJ15}, substantial modifications are necessary to accommodate
the antiperiodic structure of the admissible class of perturbations.

\

There are two key analytical results necessary to establish Proposition \ref{P:nondegeneracy}.
First, we require an appropriate characterization of the ground state eigenfunctions
of $L_{\pm}$ acting on $L^2_{\rm a}(0,T)$.  A natural approach is to attempt
to use a Perron-Frobenius type argument, demonstrating that the semigroups $e^{-L_{\pm}t}$ are positivity
improving on appropriate subspaces of $L^2_{\rm a}(0,T)$.  Second, we require a nonlocal 
Sturm-Liouville type oscillation theory for the second antiperiodic eigenfunctions of $L_{\pm}$.  
Following the general ideas in \cite{FL13} and \cite{HJ15}, this is accomplished by extending
the antiperiodic eigenvalue problems for $L_{\pm}$ on $L^2_{\rm a}(0,T)$ to appropriate local problems
on the upper half-space.

\subsection{Perron-Frobenius Theory for Antiperiodic Eigenfunctions}  \label{S:GS_Theory}

The goal of this section is to provide a characterization of the antiperiodic ground state
eigenfunctions for linear, fractional Schr\"odinger operators of the form
\begin{equation}\label{E:op}
L := \Lambda^\alpha + V(x),
\end{equation}
where the potential $V(x)$ is even, real-valued, smooth and $T$-periodic for some finite $T > 0$.  In particular,
we will classify properties of the $T$-antiperiodic ground state for $L$, along with upper bounds
on the number sign changes on higher $T$-antiperiodic eigenfunctions.  As noted in the introduction, 
even in the local case $\alpha=2$ such results are nontrivial  as $T$-antiperiodic ground
states need not be simple.  As we will see below, this comes from the fact that the semigroup
generated by $L$ is not positivity improving when acting on $L^2_{\rm a}(0,T)$.  To handle
this difficulty, we will decompose the space $L^2_{\rm a}(0,T)$ of real-valued 
$T$-antiperiodic functions on $\mathbb{R}$
into the (invariant) even and odd subspaces, and develop ground state and oscillation theories for the
operator $L$ in each subspace separately.  As we will see, restricted to these subspaces, the semigroup
generated by $L$ will indeed be positivity improving.  Finally, using rearrangement properties we find
an ordering between the antiperiodic odd and even ground state eigenvalues for $L$ in terms of monotonicity
properties of the potential $V$ on $(0,T)$.

We begin by observing that the $T$-periodicity of the potential $V$ implies that 
the operator $L$ is well defined as a closed, densely defined operator from $L^2_{\rm a}(0,T)$ into itself.
Since $V$ is a bounded and smooth potential, the operator $L$ is a relatively
compact perturbation of the operator $-\Lambda^\alpha$, 
Theorem XIII.44 from \cite{ReedSimonIV} implies
that the ground state eigenvalues of $L$ acting on an invariant subspace $\mathcal{Y}$ of $L^2_{\rm a}(0,T)$
is simple as an eigenvalue of $L|_{\mathcal{Y}}$ provided the fractional heat semigroup
$\left\{e^{-\Lambda^\alpha t}\right\}_{t\geq 0}$ is positivity improving on $\mathcal{Y}$; that is, if
\[
f\in\mathcal{Y},~~f\geq 0,~~f\neq 0\quad\implies\quad e^{-\Lambda^\alpha t}f>0\textrm{~~on~~}\mathcal{Y}.
\]
Thus, it is sufficient to study the semigroup generated by $-\Lambda^\alpha$ on $L^2_{\rm a}(0,T)$, which
we shall study below by first considering the semigroup acting on $L^2(\mathbb{R})$ and then
considering appropriate periodizations of its integral kernel.

\

The semigroup $e^{-\Lambda^\alpha t}$ acting on $L^2(\RM)$ is naturally understood via the Fourier
transform.  Throughout, the operator $\mathcal{F}$ will denote the extension to the 
space of tempered distributions $\mathcal{S}'(\mathbb{R})$ of the Fourier transform
\[
\mathcal{F}(f)(\xi):=\frac{1}{\sqrt{2\pi}}\int_{\mathbb{R}}e^{-i\xi x}f(x)dx
\]
on the Schwartz space $\mathcal{S}(\mathbb{R})$, with inverse 
$\mathcal{F}^{-1}(f)(x):=\frac{1}{\sqrt{2\pi}}\int_{\mathbb{R}}e^{i\xi x}f(\xi)d\xi$. In particular,
with this particular normalization $\mathcal{F}$ defines a unitary operator on $L^2(\mathbb{R})$.
For all $t\geq 0$, the operators $e^{-\Lambda^\alpha t}$ acting on $\mathcal{S}(\mathbb{R})$ can be understood
via
\begin{equation}  \label{D:semigroup}
e^{-\Lambda^\alpha t}f(x) = \CalF^{-1}\parens{e^{-\abs{\cdot}^\alpha t} \widehat{f}(\cdot)}(x) = \frac{1}{\sqrt{2\pi}} \int_\R e^{-\abs{\xi}^\alpha t} \widehat{f}(\xi) e^{i\xi x} \, d\xi.
\end{equation}
Alternatively, introducing the function
\[
K(x,t) := \CalF^{-1}\parens{e^{-\abs{\cdot}^\alpha t}}(x)
\]
we see that $e^{-\Lambda^\alpha t}$ can be viewed as the convolution operator
\[
e^{-\Lambda^\alpha t}f(x) = \int_\R K(x-y,t) f(y) \, dy.
\]
Note that, when $\alpha=2$, the integral kernel $K$ agrees with the standard heat-kernel and can be
explicitly expressed in terms of a Gaussian function.  While such explicit formulas 
are not available in the nonlocal case $\alpha\in(0,2)$, in the recent work of Frank \& Lenzmann 
\cite[Appendix A]{FL13} it was observed that, for all $t>0$ and $\alpha\in(0,2)$, the kernel
$K(\cdot,t)$ is even and strictly positive with $\partial_xK(x,t)<0$ for all $x>0$
and, furthermore, decays rapidly at spatial infinity.
Further, we know that $K(\cdot,t) \in L^1(\R)$ since, by the positivity of $K$,
\[
\norm{K(\cdot,t)}_{L^1(\R)} = \int_\R K(x,t) \, dx = \CalF(K(\cdot,t))(\xi=0) = 1.
\]

Since our interest is in a $T$-antiperiodic oscillation theory for operators of 
the form \eqref{E:op} we now describe how $e^{-\Lambda^\alpha t}$ acts on periodic
functions.  Since $K(\cdot,t)$ lies in $L^1(\mathbb{R})$ for all $t>0$, given
any $f\in L^\infty(\mathbb{R})$ that is $2T$-periodic we can write
\begin{align*}
e^{-\Lambda^\alpha t} f(x) 
&= \int_{-T}^T \parens{\sum_{n \in \Z} K(x-y+2nT,t)}f(y) \, dy 
= \int_{-T}^T K_p(x-y,t)f(y) \, dy,
\end{align*}
where
\begin{equation}\label{E:Kp_rep1}
K_p(x,t) := \sum_{n \in \Z} K(x+2nT,t)
\end{equation}
represents the $2T$-periodic \emph{periodization} of the integral kernel $K$.  Observe
that the sum defining $K_p$ is absolutely convergent for each $t>0$ due to the rapid
decay of $K(\cdot,t)$ at spatial infinity.  Furthermore, one can show that $K_p\in L^1_{\rm per}(0,2T)$
and admits the Fourier series expansion
\[
K_p(x,t) = \sum_{n \in \Z} e^{-\left|\pi n / T \right|^\alpha t} e^{\pi i n x / T}
\]
so that, by the convolution theorem, we have that $e^{-\Lambda^\alpha t}$ acts on $2T$-periodic
functions via
\begin{equation}\label{E:eLt_fourier}
e^{-\Lambda^\alpha t} f(x) = \sum_{n \in \Z} e^{-\left| \pi n / T \right|^\alpha t} \hat{f}(n) e^{\pi i n x / T}
\end{equation}
In particular, the operator $e^{-\Lambda^\alpha t}$ on $2T$-periodic functions has the exact same
Fourier symbol as when considering the same operator on $L^2(\mathbb{R})$.

While the Fourier representation \eqref{E:eLt_fourier} seems useful for numerical calculations,
we are unfortunately unable to extract from it the necessary information for our forthcoming
theory.  Nevertheless, using a slightly different representation coming from Bernstein's theorem,
we have the following.

\begin{lemma}  \label{L:properties_of_Kp}
For all $t > 0$ and $\alpha\in(0,2]$, $K_p(\cdot,t)$ is positive, even, 
$2T$-periodic, and strictly decreasing on $[0,T]$.
\end{lemma}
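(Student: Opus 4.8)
The plan is to prove Lemma \ref{L:properties_of_Kp} by analyzing the periodization $K_p(\cdot,t) = \sum_{n\in\Z} K(x+2nT,t)$ using the known structural properties of the line kernel $K(\cdot,t)$ recalled above from \cite[Appendix A]{FL13}: namely that $K(\cdot,t)$ is even, strictly positive, rapidly decaying, and satisfies $\partial_x K(x,t)<0$ for $x>0$. Positivity, evenness, and $2T$-periodicity of $K_p$ are immediate: positivity follows termwise from positivity of $K$ and absolute convergence of the sum; $2T$-periodicity is built into the definition by reindexing; and evenness follows from evenness of $K$ together with the symmetric reindexing $n\mapsto -n$. The genuine content is the \emph{monotonicity} claim, that $K_p(\cdot,t)$ is strictly decreasing on $[0,T]$.

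For the monotonicity, the natural approach is to pair up the terms of the series symmetrically about the points $2nT$. Write, for $x\in(0,T)$,
\[
K_p(x,t) = K(x,t) + \sum_{n\geq 1}\bigl(K(x+2nT,t) + K(x-2nT,t)\bigr) = K(x,t) + \sum_{n\geq 1}\bigl(K(x+2nT,t) + K(2nT-x,t)\bigr),
\]
using evenness of $K$. Differentiating termwise (justified by the rapid, uniform-in-$x$-on-compacts decay of $\partial_x K$, which follows from Bernstein's theorem / the completely monotone structure of $\xi\mapsto e^{-|\xi|^\alpha t}$ that makes $K(\cdot,t)$ a nice function), one gets
\[
\partial_x K_p(x,t) = \partial_x K(x,t) + \sum_{n\geq 1}\bigl(\partial_x K(x+2nT,t) - \partial_x K(2nT-x,t)\bigr).
\]
Each summand $\partial_x K(x+2nT,t) - \partial_x K(2nT-x,t)$ is strictly negative: since $x+2nT > 2nT - x > 0$ for $x\in(0,T)$ and $n\geq 1$, and $\partial_x K(\cdot,t)$ is itself strictly decreasing on $(0,\infty)$ — which is where I expect the main work to lie — we have $\partial_x K(x+2nT,t) < \partial_x K(2nT-x,t)$. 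Likewise $\partial_x K(x,t)<0$ for $x>0$ directly. Hence $\partial_x K_p(x,t)<0$ on $(0,T)$, giving strict monotonicity, and continuity extends the (non-strict) inequality to the closed interval $[0,T]$.

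The main obstacle is justifying that $\partial_x K(\cdot,t)$ is strictly decreasing on $(0,\infty)$, equivalently that $K(\cdot,t)$ is strictly convex for $x>0$ — a property not explicitly quoted in the excerpt from \cite{FL13}. The plan is to obtain this from the Bernstein-theorem representation alluded to in the paragraph preceding the lemma: the multiplier $\xi\mapsto e^{-|\xi|^\alpha t}$ restricted to $[0,\infty)$ in the variable $s=\xi^2$ (or via the subordination identity $e^{-\Lambda^\alpha t} = \int_0^\infty e^{-s(-\partial_x^2)}\,d\mu_{\alpha,t}(s)$ with a positive measure $\mu_{\alpha,t}$, valid for $\alpha\in(0,2]$) expresses $K(\cdot,t)$ as a superposition of Gaussians $K(x,t)=\int_0^\infty (4\pi s)^{-1/2} e^{-x^2/4s}\,d\mu_{\alpha,t}(s)$. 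Since each Gaussian $x\mapsto (4\pi s)^{-1/2}e^{-x^2/4s}$ has derivative $x\mapsto -\tfrac{x}{2s}(4\pi s)^{-1/2}e^{-x^2/4s}$ which is strictly decreasing on $(0,\sqrt{2s})$ but increasing thereafter, a little care is needed: one should instead argue directly that $\partial_x K(x,t)=\int_0^\infty \partial_x\bigl[(4\pi s)^{-1/2}e^{-x^2/4s}\bigr]d\mu_{\alpha,t}(s)$ and that differentiating once more under the integral gives $\partial_x^2 K(x,t) = \int_0^\infty (4\pi s)^{-1/2}\bigl(\tfrac{x^2}{4s^2}-\tfrac{1}{2s}\bigr)e^{-x^2/4s}\,d\mu_{\alpha,t}(s)$, and then compare with the pairing trick at the level of second derivatives rather than claiming global convexity. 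In fact it is cleaner to run the whole periodization-and-pairing argument \emph{inside} the subordination integral: for fixed $s>0$ the periodized Gaussian $\theta$-type kernel $G_p(x,s):=\sum_n (4\pi s)^{-1/2}e^{-(x+2nT)^2/4s}$ is classically known (via the Jacobi theta function / Poisson summation) to be positive, even, $2T$-periodic and strictly decreasing on $[0,T]$; then $K_p(x,t)=\int_0^\infty G_p(x,s)\,d\mu_{\alpha,t}(s)$ inherits all four properties, with strict monotonicity surviving because $\partial_x G_p(x,s)<0$ on $(0,T)$ for every $s>0$ and $\mu_{\alpha,t}$ is a nontrivial positive measure. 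This reduces everything to the classical heat-kernel case $\alpha=2$, handled by Poisson summation, plus Fubini-type justifications for differentiating under the two integrals — all routine given the rapid decay. I would present the subordination route as the main line of proof and note the direct pairing argument as the heuristic motivation.
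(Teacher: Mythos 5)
Your main line of proof is exactly the paper's: Bochner subordination (via Bernstein's theorem applied to the completely monotone function $e^{-z^{\alpha/2}}$) writes $K_p(\cdot,t)$ as a superposition of periodized Gauss--Weierstrass kernels with respect to a nonnegative measure, and the four claimed properties are inherited from those of the periodized Gaussian, whose strict monotonicity on $(0,T)$ the paper takes from \cite[Theorem 4.2]{Andersson13} rather than deriving it via Poisson summation (which alone does not give monotonicity, since the resulting cosine series has oscillating terms). Your preliminary term-by-term pairing argument would indeed fail as you note (since $\partial_x K(\cdot,t)$ is not monotone on $(0,\infty)$, even for the Gaussian), but since you explicitly demote it to heuristic motivation and rest the proof on the subordination route, the proposal is correct and essentially identical to the paper's proof.
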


\begin{remark} It was recently shown in \cite{EWhighest} that the periodization of a function $f\in L^1(\RM)$
that is even and completely monotone\footnote{That is, $(-1)^j\partial_z^{j}g(z)\geq 0$ for all $j\in\mathbb{N}$ and $z>0$.} on $(0,\infty)$ is automatically even and completely monotone on a
half period.  However, as is evident from the subordination formula \eqref{E:subordination} below, the
kernel $K(x)$ is \emph{not} completely monotone on $(0,\infty)$, and hence this abstract result
does not apply.
\end{remark}

\begin{remark}\label{R:coperiodic_groundstate}
By the result in  \cite{ReedSimonIV} discussed above, Lemma \ref{L:properties_of_Kp} implies that the $2T$-periodic ground state eigenvalues for the operators $L_{\pm}$ must be simple
with sign-definite eigenfunctions.  In particular, since $\phi'$ and $\phi$ belong to $2T$-periodic kernels of $L_+$ and $L_-$, respectively, the fact neither $\phi'$ nor $\phi$ are sign
definite immediately implies that both $L_{\pm}$ have at least one negative eigenvalue, invalidating the stability hypotheses of \cite{GSS1,GSS2}.  This result emphasizes why
we restrict to $T$-antiperiodic perturbations in our work.
\end{remark}

\begin{proof}
The whole line kernel $K(x,t)$ was shown by Frank \& Lenzmann \cite[Appendix A]{FL13} to be even and
positive for all $t>0$, $x\in\RM$.  Since $K(\cdot,t)\in L^1(\mathbb{R})$ for all $t>0$, the representation
\eqref{E:Kp_rep1} implies that the periodization $K_p(\cdot,t)$ must also be even, positive and
$2T$-periodic for all $t>0$.
To prove that $K_p(\cdot,t)$ is decreasing on $(0,T)$ for each $t>0$, 
we follow \cite{FL13} and observe that 
the function $g(z)=e^{-z^{\alpha/2}}$ is completely monotone on the positive half-line $(0,\infty)$ 
for all $0<\alpha\leq 2$ and hence, by Bernstein's theorem, is the Laplace
transform of a non-negative finite measure $\nu_\alpha$ depending on $\alpha$, i.e. 
$e^{-z^{\alpha/2}}=\int_0^\infty e^{-\tau z}d\nu_\alpha(\tau)$ for some such measure $\nu_\alpha$.
Setting $z=|x|^2$ and recalling the inverse Fourier representation for 
the Gaussian $e^{-\tau\xi^2}$ leads to\footnote{The subordination formula
in \cite{FL13} is stated only for the case $t=1$.  This more general formula follows from the
scaling $K(x,t) = t^{-1/\alpha} K(t^{-1/\alpha} x, 1)$.} the ``subordination formula"
\begin{equation}\label{E:subordination}
K(x,t) = t^{-1/\alpha} \int_0^\infty \frac{1}{\sqrt{2\tau}} \exp\parens{-\frac{t^{-2/\alpha} x^2}{4\tau}} \, 
d\nu_\alpha(\tau)
\end{equation}
valid for all $x \in \R$ and $t > 0$.  From \eqref{E:Kp_rep1} it follows
that for all $\alpha\in(0,2)$ the $2T$-periodic kernel $K_p$ can be expressed as
\begin{align*}
K_p(x,t) 
&= t^{-1/\alpha} \int_0^\infty \frac{1}{\sqrt{2\tau}} \brackets{ \sum_{n \in \Z} \exp\parens{-\frac{(x+2nT)^2}{4 t^{2/\alpha} \tau}} } d\nu_\alpha(\tau) \\
&= t^{-2/\alpha} \sqrt{2\pi} \int_0^\infty \brackets{ \sum_{n \in \Z } \frac{1}{\sqrt{4\pi u}} \exp\parens{-\frac{(x+2nT)^2}{4u}} } d\nu_\alpha(u),
\end{align*}
where the final equality follows from the variable substitution $u=t^{2/\alpha}\tau$.
The integrand above may be recognized as the $2T$-periodized Gauss-Weierstrass kernel
\begin{equation}  \label{E:gauss-weierstrass}
\vartheta_u(x) := \sum_{n \in \Z } \frac{1}{\sqrt{4\pi u}} \exp\parens{-\frac{(x+2nT)^2}{4u}},
\end{equation}
hence we may express $K_p(x,t)$ compactly as
\[
K_p(x,t) = t^{-2/\alpha} \sqrt{2\pi} \int_0^\infty \vartheta_u(x) \, d\nu_\alpha(u).
\]
The monotonicity properties of the function $\vartheta_u(x)$ have been
studied in \cite[Theorem 4.2]{Andersson13}, where it was shown\footnote{While the results in \cite{Andersson13}
were stated only for the case $T=\pi$ they easily extend to this more general setting via scaling.} to be strictly decreasing in $x$ on $(0,T)$ for all $u > 0$.  It now clearly follows that if $x,y \in (0,T)$ with $x < y$, then
\[
K_p(x,t) - K_p(y,t) = t^{-2/\alpha} \sqrt{2\pi} \int_0^\infty \parens{ \vartheta_u(x) - \vartheta_u(y) } \, 
d\nu_\alpha(u) > 0,
\]
i.e. $K_p(\cdot,t)$ is decreasing on $(0,T)$ for all $t > 0$, as claimed.
\end{proof}

\begin{remark}
A consequence of Bernstein's theorem for completely-monotone functions, the subordination formula \eqref{E:subordination} conveniently encodes all $\alpha$-dependence into the non-negative measure $\nu_\alpha$, which facilitates studying the fractional heat kernel using familiar techniques of the classical heat kernel.  In order for this result to apply, we must restrict to $\alpha\in(0,2]$ as the function $g(z) = \exp(-z^{\alpha/2})$ 
fails to be completely monotone for $\alpha > 2$.
\end{remark}

To study the antiperiodic eigenvalues of $L$, we now 
further restrict the semigroup $e^{-\Lambda^\alpha t}$ to the subspace $L^2_{\rm a}(0,T)$ of $T$-antiperiodic
functions.  For such $f\in L^2_{\rm a}(0,T)$ we have the representation
\[
e^{-\Lambda^\alpha t}f(x) 
= \int_0^T \brackets{ K_p(x-y,t) - K_p(x-y-T,t)}f(y) \, dy 
=:\int_0^T K_a(x-y,t)f(y) \, dy
\]
for the action of $e^{-\Lambda^\alpha t}$ on $T$-antiperiodic functions, where $K_a$
denotes the $T$-antiperiodic kernel
\begin{equation}  \label{D:K_a}
K_a(x,t) := K_p(x,t) - K_p(x-T,t).
\end{equation}
Next, we gather some important properties of $K_a$.

\begin{lemma}[Properties of $K_a$]\label{L:properties_of_Ka}
For all $t>0$ and $\alpha\in(0,2]$, the function $K_a(\cdot,t)$ is even, $T$-antiperiodic and 
strictly positive for all $x\in(-T/2,T/2)$.  Furthermore, $K_a(\cdot,t)$ is odd about $x=T/2$
and is strictly decreasing on $(0,T)$.
\end{lemma}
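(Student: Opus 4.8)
The plan is to read off every assertion from the definition \eqref{D:K_a} of $K_a$ together with the properties of the $2T$-periodic periodization $K_p(\cdot,t)$ established in Lemma \ref{L:properties_of_Kp}: that $K_p(\cdot,t)$ is positive, even, $2T$-periodic, and \emph{strictly} decreasing on $[0,T]$. Two consequences of the evenness and $2T$-periodicity of $K_p$ will be used throughout, namely $K_p(x+T,t)=K_p(x-T,t)$ and $K_p(x-T,t)=K_p(T-x,t)$; the latter, in particular, furnishes the convenient representation $K_a(x,t)=K_p(x,t)-K_p(T-x,t)$, valid for all $x\in[0,T)$.

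For the symmetries, I would first compute directly from \eqref{D:K_a} that
\[
K_a(-x,t)=K_p(-x,t)-K_p(-x-T,t)=K_p(x,t)-K_p(x+T,t)=K_p(x,t)-K_p(x-T,t)=K_a(x,t),
\]
so $K_a(\cdot,t)$ is even, and likewise
\[
K_a(x+T,t)=K_p(x+T,t)-K_p(x,t)=K_p(x-T,t)-K_p(x,t)=-K_a(x,t),
\]
so $K_a(\cdot,t)$ is $T$-antiperiodic (hence $2T$-periodic). Combining evenness with $T$-antiperiodicity then yields the reflection symmetry about $x=T/2$: for every $s$, $K_a(T/2-s,t)=K_a(s-T/2,t)=K_a((s+T/2)-T,t)=-K_a(s+T/2,t)$, i.e. $K_a(\cdot,t)$ is odd about $x=T/2$, and in particular $K_a(T/2,t)=0$.

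The positivity and the strict monotonicity both follow from the representation $K_a(x,t)=K_p(x,t)-K_p(T-x,t)$ on $[0,T)$ together with the strict decrease of $K_p(\cdot,t)$ on $[0,T]$. For positivity: by evenness it suffices to take $x\in[0,T/2)$, where $0\le x<T-x\le T$, so $K_a(x,t)=K_p(x,t)-K_p(T-x,t)>0$. For monotonicity: for $0<x_1<x_2<T$ one has
\[
K_a(x_1,t)-K_a(x_2,t)=\bigl(K_p(x_1,t)-K_p(x_2,t)\bigr)+\bigl(K_p(T-x_2,t)-K_p(T-x_1,t)\bigr),
\]
and since $0<x_1<x_2<T$ and $0<T-x_2<T-x_1<T$, both bracketed terms are strictly positive; hence $K_a(\cdot,t)$ is strictly decreasing on $(0,T)$. (Alternatively one could differentiate $K_a$ termwise and invoke the oddness of $\partial_x K_p(\cdot,t)$, but the finite-difference argument sidesteps justifying differentiation under the periodization sum.)

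I do not expect a genuine obstacle: once Lemma \ref{L:properties_of_Kp} is in hand, the proof is bookkeeping with the reflection identity for $K_p$ and transporting its monotonicity on $[0,T]$. The only points needing mild care are matching the strict inequalities ($x<T-x$ on $[0,T/2)$, and $x_1<x_2$, $T-x_2<T-x_1$ inside $[0,T]$) to the \emph{open} intervals appearing in the statement, and recording that $K_a(\pm T/2,t)=0$, so the positivity is genuinely confined to $(-T/2,T/2)$.
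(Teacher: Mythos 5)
Your proposal is correct and follows essentially the same route as the paper: symmetries read off from the definition of $K_a$, oddness about $x=T/2$ from evenness plus $T$-antiperiodicity, and positivity and monotonicity from the reflection identity $K_a(x,t)=K_p(x,t)-K_p(T-x,t)$ together with the strict decrease of $K_p(\cdot,t)$ on $[0,T]$ from Lemma \ref{L:properties_of_Kp}. The only (minor) divergence is that the paper proves monotonicity by differentiating and using $\partial_x K_p(x,t)<0$ on $(0,T)$, whereas you use a finite-difference comparison; your variant is slightly more economical since it avoids justifying differentiation of the periodized kernel.
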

\begin{proof}
The parity and antiperiodicity of $K_a$ follow directly from \eqref{D:K_a}.
Since all even, $T$-antiperiodic functions are odd\footnote{Indeed, if $f$ is even and $T$-antiperiodic
then $f(x+T/2)=f(-x-T/2)=f(-x+T/2)$.} about $x=T/2$, it remains to show
that $K_a(\cdot,t)$ is positive on $(-T/2,T/2)$ and strictly decreasing on $(0,T)$.
To this end, fix $x\in(0,T/2)$ and observe that the evenness of $K_p(\cdot,t)$ implies
that
\begin{equation}\label{E:Kp_rep2}
K_a(x,t) = K_p(x,t) - K_p(T-x,t) > 0, 
\end{equation}
where the strict inequality follows since $K_p(\cdot,t)$ is strictly decreasing on $(0,T)$ 
by Lemma \ref{L:properties_of_Kp}
and $0<x<T-x<T$ for $x\in(0,T/2)$.
Since $K_a(\cdot,t)$ is even for all $t>0$, the positivity of $K_a(\cdot,t)$ on $(-T/2,T/2)$ follows.
Similarly, differentiating \eqref{E:Kp_rep2} with respect to $x$, it follows
that for $x\in(0,T)$ we have
\[
\partial_x K_a(x,t) = \partial_x K_p(x,t) + \partial_x K_p(T-x,t)<0
\]
where we have used that $\partial_x K_p(x,t)<0$ for all $x\in(0,T)$ by Lemma \ref{L:properties_of_Kp}.
\end{proof}

An important consequence of Lemma \ref{L:properties_of_Ka} is that the semigroup $e^{-\Lambda^\alpha t}$
is not positivity improving (nor even positivity preserving) on $L^2_{\rm a}(0,T)$, and hence, by the functional
calculus, the antiperiodic ground states of the operator $L$ in \eqref{E:op} cannot be
characterized by standard Perron-Frobenius arguments.  However, we note that since
the potential $V(x)$ in \eqref{E:op} is even, the operator $L$ respects the orthogonal
decomposition
\[
L^2_{\rm a}(0,T)=L^2_{\rm a,even}(0,T)\oplus L^2_{\rm a,odd}(0,T),
\]
where $L^2_{\rm a,even/odd}(0,T)$ denotes the subspaces of even/odd functions in $L^2_{\rm a}(0,T)$, respectively.
Precisely, the subspaces $L_{\rm a,even/odd}(0,T)$ are invariant subspaces for $L$ and the above decomposition
implies that all eigenfunctions for $L$ may be chosen to be either even or odd.  In particular,
\[
\sigma_{L^2_{\rm a}(0,T)}\left(L\right)=\sigma_{L^2_{\rm a,even}(0,T)}\left(L\right) \,
			\bigcup \, \sigma_{L^2_{\rm a,odd}(0,T)}\left(L\right),
\]
where we emphasize the above spectral decomposition need not  be disjoint.  Next, we consider
the action of the semigroup $e^{-\Lambda^\alpha t}$ on the above invariant subspaces.

First, note that if $f\in L^2_{\rm a,even}(0,T)$ then
\begin{align*}
e^{-\Lambda^\alpha t}f(x) &= \frac{1}{2} \brackets{ \int_0^T K_a(x-y,t)f(y) \, dy + \int_{-T}^0 K_a(x+y,t)f(y)dy}\\
&= \frac{1}{2} \brackets{ \int_0^T K_a(x-y,t)f(y) \, dy + \int_0^T K_a(x+y-T,t)f(y-T) \, dy } \nonumber \\
&= \frac{1}{2} \int_0^T \brackets{ K_a(x-y,t) + K_a(x+y,t) }f(y) \, dy  \nonumber 
\end{align*}
where the final equality follows from the $T$-antiperiodicity of both $K_a(\cdot,t)$ and $f$.
Observe that since $f(y)$ and $K_a(x-y,t)+K_a(x+y,t)$ are even and $T$-antiperiodic in $y$, 
they are both odd functions in $y$ about $y=T/2$.  Consequently, their product is even
in $y$ about $y=T/2$, which yields the representation
\begin{equation}  \label{E:semigroup_even}
e^{-\Lambda^\alpha t}f(x) = \int_0^{T/2} \brackets{ K_a(x-y,t) + K_a(x+y,t) }f(y) \, dy
\end{equation}
for the action of semigroup $e^{-\Lambda^\alpha t}$ on $L^2_{\rm a,even}(0,T)$.

\begin{lemma}  \label{L:even_heat_kernel_positivity}
For all $x,y\in(-T/2,T/2)$ and $t>0$, we have
\[
K_a(x-y,t) + K_a(x+y,t) > 0.
\]
In particular, the semigroup $e^{-\Lambda^\alpha t}$ restricted to $L^2_{\rm a,even}(0,T)$ is positivity
improving, i.e. if $f\in L^2_{\rm a,even}(0,T)$ is non-trivial with $f(x)\geq0$ for $x\in(-T/2,T/2)$, then
$e^{-\Lambda^\alpha t}f(x)>0$ for all $x\in(-T/2,T/2)$.
\end{lemma}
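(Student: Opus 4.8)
The plan is to derive the pointwise kernel inequality purely from the one–variable properties of $K_a(\cdot,t)$ established in Lemma \ref{L:properties_of_Ka}, and then read off the positivity–improving statement from the representation \eqref{E:semigroup_even}. Since $K_a(\cdot,t)$ is even, the quantity $K_a(x-y,t)+K_a(x+y,t)$ is unchanged if we replace $x$ by $|x|$ or $y$ by $|y|$, so it suffices to treat $x,y\in[0,T/2)$. For such $x,y$ one always has $|x-y|<T/2$, hence $K_a(x-y,t)=K_a(|x-y|,t)>0$ by the strict positivity of $K_a(\cdot,t)$ on $(-T/2,T/2)$. If moreover $x+y\le T/2$, then $K_a(x+y,t)\ge 0$ and the sum is strictly positive; the only case requiring an argument is $x+y\in(T/2,T)$ (note $x+y<T$ since $x,y<T/2$), where $K_a(x+y,t)<0$.

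In that remaining case I would fold the second term back into the fundamental interval: using the $T$-antiperiodicity and then the evenness of $K_a(\cdot,t)$, write $K_a(x+y,t)=-K_a(x+y-T,t)=-K_a(T-x-y,t)$ with $T-x-y\in(0,T/2)$. The desired inequality then becomes $K_a(|x-y|,t)>K_a(T-x-y,t)$, and since $K_a(\cdot,t)$ is strictly decreasing on $(0,T)$, this is equivalent to the elementary inequality $|x-y|<T-x-y$. The latter holds because $x<T/2$ gives $x-y<T-x-y$ while $y<T/2$ gives $-(x-y)<T-x-y$. This establishes $K_a(x-y,t)+K_a(x+y,t)>0$ for all $x,y\in(-T/2,T/2)$ and $t>0$.

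For the positivity–improving claim I would substitute this into \eqref{E:semigroup_even}. Given a non-trivial $f\in L^2_{\rm a,even}(0,T)$ with $f\ge 0$ on $(-T/2,T/2)$, evenness forces $f$ not to vanish almost everywhere on $(0,T/2)$ — otherwise $f\equiv 0$ on $(-T/2,T/2)$ and hence everywhere by $T$-antiperiodicity — while for each fixed $x\in(-T/2,T/2)$ the continuous kernel $y\mapsto K_a(x-y,t)+K_a(x+y,t)$ is strictly positive on $(0,T/2)$ by the step above. Hence the integral in \eqref{E:semigroup_even} is strictly positive, i.e. $e^{-\Lambda^\alpha t}f(x)>0$ for every $x\in(-T/2,T/2)$. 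The only genuinely delicate point is the $x+y\in(T/2,T)$ case; once the monotonicity of $K_a$ is translated, via the antiperiodic fold-back, into the trivial inequality $|x-y|<T-x-y$, everything else is bookkeeping.
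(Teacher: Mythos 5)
Your proof is correct. The kernel inequality is the heart of the matter, and you establish it by a route that differs in mechanism from the paper's, though both rest on exactly the properties of $K_a(\cdot,t)$ recorded in Lemma \ref{L:properties_of_Ka}. The paper sets $G(x,y;t)=K_a(x-y,t)+K_a(x+y,t)$, reduces by symmetry to $0<y\le x<T/2$, and runs a calculus argument: $\partial_x G<0$ there because $K_a(\cdot,t)$ is decreasing on $(0,T)$, while $G(T/2,y;t)=0$ by oddness of $K_a(\cdot,t)$ about $T/2$, so $G$ decreases toward a zero boundary value and must be positive. You instead isolate the only term that can be negative, namely $K_a(x+y,t)$ when $x+y\in(T/2,T)$, fold it back via $K_a(x+y,t)=-K_a(T-x-y,t)$, and reduce the claim to the comparison $K_a(|x-y|,t)>K_a(T-x-y,t)$, which strict monotonicity converts into the elementary inequality $|x-y|<T-x-y$. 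This is a pointwise, derivative-free argument; what it buys is that it makes completely explicit \emph{why} the inequality holds (the two kernel arguments are ordered), whereas the paper's version is slightly slicker but hides the comparison inside a boundary-value argument. You also supply the deduction of the positivity-improving statement from \eqref{E:semigroup_even}, which the paper leaves implicit. The only microscopic point worth noting is the degenerate argument $|x-y|=0$ (i.e.\ $x=y$) in your comparison step, where ``strictly decreasing on $(0,T)$'' must be supplemented by continuity of $K_a(\cdot,t)$ at $0$ to conclude $K_a(0,t)>K_a(T-2x,t)$; since $K_a(\cdot,t)$ is smooth and even, this is immediate and does not affect the validity of the argument.
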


\begin{proof}
We begin by proving the claim for $x,y \in (0,T/2)$.  Fix $t > 0$ and define $G(x,y;t) := K_a(x-y,t) + K_a(x+y,t)$, and note that $G(x,y;t) = G(y,x;t)$ for all $x,y$.  So, without loss of generality we need only prove that $G(x,y;t) > 0$ for all $(x,y) \in \mathcal{R} := \{ (x,y) : 0 < x < T/2, \ 0 < y \leq x \}$.  Observe that for all $(x,y) \in \mathcal{R}$, we have
$$
0 \leq x-y < T/2 \quad \text{and} \quad 0 \leq x+y < T,
$$
hence
$$
\partial_x G(x,y;t) = \partial_x K_a(x+y,t) + \partial_x K_a(x-y,t) < 0
$$
since $K_a(\cdot,t)$ is decreasing on $(0,T)$ by Lemma \ref{L:properties_of_Ka}.  Moreover, for all $y \in (0,T/2)$, we have
$$
G(T/2,y;t) = K_a(T/2+y,t) + K_a(T/2-y,t) = 0
$$
since $K_a(\cdot,t)$ is odd about $T/2$, again by Lemma \ref{L:properties_of_Ka}.  Thus for every $y_0 \in (0,T/2)$, the function $x \mapsto G(x,y_0;t)$ is decreasing on $y_0 < x < T/2$ toward the value $G(T/2,y_0;t) = 0$, hence it must be that $G(x,y;t) > 0$ for all $(x,y) \in \mathcal{R}$, and we conclude that $G(x,y;t) > 0$ for all $x,y \in (0,T/2)$.  Finally, since $G(x,y;t) > 0$ for all $x,y \in (0,T/2)$, we also have that $G(x,y;t) > 0$ for all $x,y \in (-T/2,T/2)$ since $G$ is invariant under the maps $x \mapsto -x$ and $y \mapsto -y$.
\end{proof}

\begin{remark}
Alternatively to the above proof, one can observe that for each fixed $t>0$, the function
$G(x,y;t):=K_a(x-y,t)+K_a(x+y,t)$ is a solution of the IVBVP
\[\left\{\begin{aligned}
&G_{yy}=G_{xx},~~x,y\in(-T/2,T/2)\\
&G(x,0;t)=2K_a(x,t),~~x\in(-T/2,T/2)\\
&G(\pm T/2,y;t)=0,~~y\in(-T/2,T/2),
\end{aligned}\right.
\]
which is simply the 1D-wave equation (here treating $y$ as the temporal variable) 
with homogeneous Dirichlet boundary conditions.  Since $K_a(x,t)>0$
for $x\in(-T/2,T/2)$, the positivity of $G(x,y;t)$ for $x,y\in(-T/2,T/2)$ and $t>0$ follows immediately.
\end{remark}

Turning our attention to the odd subspace, similar calculations to those above
yield the representation
\begin{equation}\label{E:semigroup_odd}
e^{-\Lambda^\alpha t} f(x) = \frac{1}{2} \int_0^T \brackets{ K_a(x-y,t) - K_a(x+y,t) } f(y) \, dy.
\end{equation}
for the action of the semigroup $e^{-\Lambda^\alpha t}$ on $L^2_{\rm a,odd}(0,T)$.  

\begin{lemma}  \label{L:odd_heat_kernel_positivity}
For all $t>0$ and $x,y\in(0,T)$, we have
\[
K_a(x-y,t) - K_a(x+y,t) > 0.
\]
In particular, the semigroup $e^{-\Lambda^\alpha t}$ restricted to $L^2_{\rm a,odd}(0,T)$ is positivity improving,
i.e. if $f\in L^2_{\rm a,odd}(0,T)$ is nontrivial with $f(x)\geq 0$ for $x\in(0,T)$, then
$e^{-\Lambda^\alpha t}f(x)>0$ for all $x\in(0,T)$.
\end{lemma}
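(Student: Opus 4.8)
The plan is to reduce everything to the pointwise kernel inequality
\[
K_a(x-y,t)-K_a(x+y,t)>0,\qquad x,y\in(0,T),\ t>0,
\]
after which the positivity--improving statement follows exactly as in Lemma~\ref{L:even_heat_kernel_positivity}: using the representation \eqref{E:semigroup_odd}, if $f\in L^2_{\rm a,odd}(0,T)$ is nontrivial with $f\geq 0$ on $(0,T)$ (hence $f>0$ on a set of positive measure), then $e^{-\Lambda^\alpha t}f(x)=\tfrac12\int_0^T[K_a(x-y,t)-K_a(x+y,t)]f(y)\,dy>0$ for every $x\in(0,T)$.

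For the pointwise inequality, set $D(x,y):=K_a(x-y,t)-K_a(x+y,t)$. Since $K_a(\cdot,t)$ is even (Lemma~\ref{L:properties_of_Ka}), $D$ is symmetric in $x$ and $y$, so I may assume $0<y\leq x<T$; then $x-y\in[0,T)$ while $x+y\in(0,2T)$. The key manoeuvre is to fold the second argument back into $(0,T]$ using that $K_a(\cdot,t)$ is even and $2T$-periodic (it is $T$-antiperiodic): set $r:=x+y$ if $x+y\leq T$ and $r:=2T-(x+y)$ if $x+y\in(T,2T)$, so that $K_a(x+y,t)=K_a(r,t)$ with $r\in(0,T]$. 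In either case $r>x-y$: for $x+y\leq T$ this is immediate from $y>0$, and for $x+y\in(T,2T)$ it reads $2T-(x+y)>x-y$, i.e. $x<T$, which holds by hypothesis. Since $K_a(\cdot,t)$ is strictly decreasing on $[0,T]$ (strictly decreasing on $(0,T)$ by Lemma~\ref{L:properties_of_Ka}, and continuous on $[0,T]$ by smoothness) and $0\leq x-y<r\leq T$, we conclude $K_a(x-y,t)>K_a(r,t)=K_a(x+y,t)$, i.e. $D(x,y)>0$.

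I expect the only genuine subtlety to be the regime $x+y>T$, where $K_a(x+y,t)<0$ and a direct monotonicity comparison is unavailable; this is precisely what the folding step repairs, and its success rests on the sharp endpoint bound $x<T$ (which keeps $2T-(x+y)$ strictly above $x-y$). The degenerate configurations $x=y$ (so $x-y=0$) and $x+y=T$ (so $r=T$) are absorbed by invoking strict monotonicity on the \emph{closed} interval $[0,T]$. Relative to the even-sector argument this is shorter, since no restriction of the $y$-integration to $(0,T/2)$ is needed. As an alternative (mirroring the remark after Lemma~\ref{L:even_heat_kernel_positivity}), one can observe that $D$ solves the one-dimensional wave equation $D_{yy}=D_{xx}$ on $(0,T)\times(0,T)$ with $D=0$ on $\{x=0\}\cup\{x=T\}\cup\{y=0\}$ and $D_y(\cdot,0)=-2\,\partial_xK_a(\cdot,t)>0$ on $(0,T)$; the odd $2T$-periodic extension of $-2\,\partial_xK_a(\cdot,t)$ has an antiderivative that is even, $2T$-periodic, and increasing on $[0,T]$, and the same folding estimate then shows the d'Alembert formula for $D$ is strictly positive on $(0,T)\times(0,T)$.
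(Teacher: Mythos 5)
Your proposal is correct, but it proves the kernel inequality by a genuinely different route than the paper. The paper's proof is a two-line reduction to Lemma \ref{L:even_heat_kernel_positivity}: using $T$-antiperiodicity one writes $K_a(x-y,t)-K_a(x+y,t)=K_a\bigl((x-\tfrac{T}{2})-(y-\tfrac{T}{2})\bigr)+K_a\bigl((x-\tfrac{T}{2})+(y-\tfrac{T}{2})\bigr)$ and observes that $x-\tfrac{T}{2},\,y-\tfrac{T}{2}\in(-T/2,T/2)$, so the even-sector lemma applies verbatim. Your argument instead is self-contained: after symmetrizing to $0<y\leq x<T$, you fold $x+y$ back into $(0,T]$ using evenness and $2T$-periodicity of $K_a(\cdot,t)$, check that the folded point $r$ strictly exceeds $x-y$ (the case $x+y>T$ reducing exactly to $x<T$), and conclude by strict monotonicity of $K_a(\cdot,t)$ on $[0,T]$ (the extension from $(0,T)$ to the closed interval by continuity is legitimate and handles the degenerate configurations $x=y$ and $x+y=T$). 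What the paper's approach buys is brevity and the structural point that the odd sector is literally the even sector shifted by a half period; what yours buys is independence from Lemma \ref{L:even_heat_kernel_positivity} and the observation that the odd-sector kernel inequality is actually the easier one, being a pure monotonicity comparison (a difference of kernel values) rather than a sum requiring the boundary-value argument of the even case. One cosmetic inaccuracy: your motivating remark that $K_a(x+y,t)<0$ whenever $x+y>T$ is false for $x+y\in(3T/2,2T)$, where $K_a(x+y,t)>0$ by $2T$-periodicity; this does not affect the proof, since the folding argument never uses the sign of $K_a(x+y,t)$.
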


\begin{proof}
Fix $t>0$ and $x,y \in (0,T)$, and observe that the $T$-antiperiodicity of $K_a(\cdot,t)$ implies
\begin{align*}
K_a(x-y,t) - K_a(x+y,t) &= K_a(x-y,t) + K_a(x+y-T,t)\\ 
&= K_a\left( \left(x-\frac{T}{2}\right) - \left(y - \frac{T}{2}\right)\right)  + K_a\parens{ \parens{x - \frac{T}{2}} + \parens{y - \frac{T}{2}} }.
\end{align*}
Since $x-T/2,~y-T/2\in(-T/2,T/2)$, the proof follows by Lemma \ref{L:even_heat_kernel_positivity}.
\end{proof}

From Lemma \ref{L:even_heat_kernel_positivity} and Lemma \ref{L:odd_heat_kernel_positivity}, the ground state
eigenfunctions of $e^{-\Lambda^\alpha t}$ acting on the invariant subspace $L^2_{\rm a,even/odd}(0,T)$
are positivity improving.  Since the operator $L$
defined in \eqref{E:op} is a relatively compact perturbation of $-\Lambda^\alpha$, we can apply
standard Perron-Frobenius arguments to deduce that the largest eigenvalues of $e^{-Lt}$
restricted to $H^{\alpha/2}_{\rm a,even}(0,T)$ and $H^{\alpha/2}_{\rm a,odd}(0,T)$ separately
are simple with strictly positive eigenfunction on $(-T/2,T/2)$ and $(0,T)$, respectively;
see \cite[Theorem XIII.44]{ReedSimonIV}, for instance.  By the functional calculus, this establishes
the following characterization of the antiperiodic ground states.

\begin{theorem}[Antiperiodic Ground State Theory]\label{T:ground}
Let $\alpha\in(1,2)$ and let $V:\mathbb{R}\to\mathbb{R}$ be an even, smooth, $T$-periodic potential and consider the linear operator
$L=\Lambda^\alpha+V(x)$ acting on $L^2_{\rm a}(0,T)$.  
\begin{itemize}
\item[(a)] The ground state eigenvalue of $L$ 
restricted to $L^2_{\rm a,even}(0,T)$ is simple, and the 
corresponding $T$-antiperiodic, even eigenfunction is sign-definite on $(-T/2,T/2)$.  
\item[(b)] The ground state eigenvalue of $L$ restricted to $L^2_{\rm a,odd}(0,T)$ is simple, 
and the corresponding $T$-antiperiodic, odd eigenfunction is sign-definite on $(0,T)$.
\end{itemize}
\end{theorem}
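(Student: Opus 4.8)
The plan is to deduce Theorem~\ref{T:ground} as a direct application of the abstract Perron--Frobenius machinery for semigroups, once we have verified the positivity-improving property established in Lemmas \ref{L:even_heat_kernel_positivity} and \ref{L:odd_heat_kernel_positivity}. First I would record that, since $V$ is even and real-valued, the operator $L = \Lambda^\alpha + V$ commutes with the reflection $x \mapsto -x$, and hence leaves invariant the two closed subspaces $L^2_{\rm a,even}(0,T)$ and $L^2_{\rm a,odd}(0,T)$; in particular it suffices to analyze $L|_{\mathcal Y}$ for $\mathcal Y$ each of these sectors separately. On each sector $\mathcal Y$, $L|_{\mathcal Y}$ is a relatively compact perturbation of $(-\Lambda^\alpha)|_{\mathcal Y}$ (since $V$ is bounded and smooth), so $L|_{\mathcal Y}$ is self-adjoint, bounded below, with compactly embedded domain $H^\alpha_{\rm a}(0,T)\cap\mathcal Y$, and therefore has purely discrete spectrum accumulating only at $+\infty$. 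In particular the ground state eigenvalue $\lambda_0(\mathcal Y) := \inf \sigma_{\mathcal Y}(L)$ is attained.

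Next I would pass to the semigroup. By the functional calculus, $\{e^{-Lt}\}_{t\ge 0}$ is a strongly continuous, self-adjoint, positivity-improving semigroup on $\mathcal Y$: the positivity-improving property is precisely the content of Lemma~\ref{L:even_heat_kernel_positivity} (for $\mathcal Y = L^2_{\rm a,even}(0,T)$, using the representation \eqref{E:semigroup_even}) and Lemma~\ref{L:odd_heat_kernel_positivity} (for $\mathcal Y = L^2_{\rm a,odd}(0,T)$, using \eqref{E:semigroup_odd}), combined with the fact that multiplication by $e^{-Vt}$ is positivity-preserving and that the Trotter product formula $e^{-Lt} = \lim_{n\to\infty}\big(e^{-\Lambda^\alpha t/n}e^{-Vt/n}\big)^n$ expresses $e^{-Lt}$ as a limit of compositions of a positivity-improving map with a positivity-preserving one (hence positivity-improving). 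The largest eigenvalue of $e^{-Lt}$ on $\mathcal Y$ is $e^{-\lambda_0(\mathcal Y)t}$, and the associated eigenspace is exactly the $\lambda_0(\mathcal Y)$-eigenspace of $L|_{\mathcal Y}$. I would then invoke the abstract Perron--Frobenius theorem for positivity-improving operators (e.g.\ \cite[Theorem XIII.44]{ReedSimonIV}): it yields that this largest eigenvalue is \emph{simple} and that the corresponding eigenfunction can be chosen strictly positive --- meaning strictly positive a.e.\ on the region where the kernel governs positivity, namely $(-T/2,T/2)$ in the even sector and $(0,T)$ in the odd sector. Smoothness of the eigenfunction (hence genuine pointwise positivity) follows from elliptic regularity for $L$, exactly as in the bootstrapping argument used in the proof of Proposition~\ref{P:min}. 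This gives statements (a) and (b).

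The main technical point --- which is really the one already discharged by the earlier lemmas --- is the verification that the \emph{antiperiodic} heat semigroup is positivity-improving on the even and odd sectors despite failing to be so on all of $L^2_{\rm a}(0,T)$; the remaining work is the by-now-standard translation of that kernel positivity into a spectral statement via Perron--Frobenius and the Trotter formula. A secondary point worth being careful about is the precise sense in which ``sign-definite'' is asserted: I would emphasize that the eigenfunction is strictly positive (after a choice of overall sign) on the open interval indicated, but may well vanish at the endpoints $x = \pm T/2$ in the even case (indeed it must, since an even $T$-antiperiodic function is forced to vanish at $x=T/2$, as noted in the proof of Lemma~\ref{L:properties_of_Ka}) --- so ``sign-definite on $(-T/2,T/2)$'' is the sharp statement and no contradiction arises. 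Finally I would note that simplicity is understood here \emph{within the given sector}: the full $T$-antiperiodic ground state may still have multiplicity two, arising from one even and one odd ground state, and the relative ordering of $\lambda_0(L^2_{\rm a,even})$ and $\lambda_0(L^2_{\rm a,odd})$ is exactly what Proposition~\ref{P:gsordering} addresses in terms of the monotonicity of $V$.
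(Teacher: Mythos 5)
Your proposal follows essentially the same route as the paper: the paper's proof of Theorem~\ref{T:ground} consists precisely of invoking \cite[Theorem XIII.44]{ReedSimonIV} separately on the even and odd sectors, using the positivity-improving property of $e^{-\Lambda^\alpha t}$ from Lemmas~\ref{L:even_heat_kernel_positivity} and~\ref{L:odd_heat_kernel_positivity} together with the fact that $V$ is a bounded (hence relatively compact) perturbation. The one small caveat is that a limit of positivity-improving operators need not itself be positivity improving, so the Trotter step should be closed with the standard pointwise lower bound $\bigl(e^{-\Lambda^\alpha t/n}e^{-Vt/n}\bigr)^n f \geq e^{-\norm{V}_{L^\infty} t}\, e^{-\Lambda^\alpha t} f$ for $f\geq 0$; this is exactly what the cited Reed--Simon machinery supplies, so there is no substantive difference from the paper's argument.
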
 

\begin{proof}
Parts (a) and (b) follow directly from Lemma \ref{L:even_heat_kernel_positivity},
Lemma \ref{L:odd_heat_kernel_positivity}, and  \cite[Theorem XIII.44]{ReedSimonIV}, as discussed above.
\end{proof}

While Theorem \ref{T:ground} establishes the simplicity of the even and odd antiperiodic ground state
eigenvalues of $L$ on the respective subspace, it is natural to consider the \emph{ordering} between
these ground state eigenvalues. 
When the potential has sufficiently small amplitude, the ordering between odd and even $T$-antiperiodic ground 
state eigenvalues may be verified directly through the use of bifurcation theory:
see, for example, Proposition 6.2 and Remark 6.3 in \cite{NPS} where the analysis was carried out in a local
context.  In that case, the ground state eigenvalues agree at zero-amplitude and one tracks the splitting of these
eigenvalues for very small amplitudes.  
For general amplitude potentials, however, in the local case
$\alpha=2$ it was shown in \cite[Lemma 2.2]{DPR} through the use ODE techniques and increasing/decreasing rearrangement
inequalities that the ordering of these ground states 
depends sensitively on the monotonicity properties of the periodic potential $V$ in \eqref{E:op}. 
Using symmetric antiperiodic rearrangement inequalities,
together with the above nonlocal ground state theory, we are able to extend the results of \cite{DPR}
to the nonlocal setting $\alpha\in(1,2)$; see Appendix \ref{A:Polya}.  
Such information will be used heavily in the coming sections.

\begin{proposition}[Ground State Ordering]\label{P:gsordering}
Let $\alpha\in(1,2)$ and let $V:\RM\to\RM$ be an even, smooth, $T$-periodic potential, and consider
the linear operator $L=\Lambda^\alpha+V(x)$ acting on $L^2_{\rm a}(0,T)$.
\begin{itemize}
\item[(i)] If the potential $V$ is nonincreasing on $(0,T/2)$, then the ground state $T$-antiperiodic
eigenvalue of $L$ has at least one odd eigenfunction, i.e.
\[
\min\sigma\left(L\big{|}_{L^2_{\rm a,odd}(0,T)}\right)\leq\min\sigma\left(L\big{|}_{L^2_{\rm a,even}(0,T)}\right).
\]
\item[(ii)] If the potential $V$ is nondecreasing on $(0,T/2)$, then the ground state $T$-antiperiodic
eigenvalue of $L$ has at least one even eigenfunction, i.e.
\[
\min\sigma\left(L\big{|}_{L^2_{\rm a,even}(0,T)}\right)\leq\min\sigma\left(L\big{|}_{L^2_{\rm a,odd}(0,T)}\right).
\]
\end{itemize}
\end{proposition}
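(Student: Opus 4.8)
The plan is to compare the even and odd antiperiodic ground state eigenvalues of $L$ via the Rayleigh--Ritz variational principle, exploiting the sign-definiteness of the sector ground states from Theorem~\ref{T:ground}, the antiperiodic P\'olya--Szeg\H{o} inequality of Appendix~\ref{A:Polya}, and the Hardy--Littlewood rearrangement inequalities on an interval. Write $\lambda_{\rm e}:=\min\sigma\big(L|_{L^2_{\rm a,even}(0,T)}\big)$ and $\lambda_{\rm o}:=\min\sigma\big(L|_{L^2_{\rm a,odd}(0,T)}\big)$. Since $L$ is self-adjoint and bounded below with compactly embedded form domain, each of these equals the infimum of the quadratic form $u\mapsto\int_0^T\big(\,|\Lambda^{\alpha/2}u|^2+V|u|^2\,\big)\,dx$ over unit vectors in the corresponding sector of $H^{\alpha/2}_{\rm a}(0,T)$, and that infimum is attained at the sign-definite ground state eigenfunctions furnished by Theorem~\ref{T:ground}. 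I will carry out (i); part (ii) follows by the same argument with the even and odd sectors interchanged and the monotonicity hypothesis on $V$ reversed.

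For (i), let $\psi$ denote the even antiperiodic ground state eigenfunction, normalized so that $\|\psi\|=1$ and $\psi>0$ on $(-T/2,T/2)$; note that membership in the even antiperiodic sector forces $\psi(\pm T/2)=0$, so $\psi$ is recovered from its restriction to the fundamental domain $(-T/2,T/2)$ by antiperiodic extension. First I would replace $\psi$ by its symmetric-decreasing rearrangement $\psi^\star$ on $(-T/2,T/2)$, extended antiperiodically: then $\psi^\star$ is again even and $T$-antiperiodic, hence lies in $H^{\alpha/2}_{\rm a,even}(0,T)$ with $\|\psi^\star\|=1$, and by the antiperiodic P\'olya--Szeg\H{o} inequality of Appendix~\ref{A:Polya} one has $\int_0^T|\Lambda^{\alpha/2}\psi^\star|^2\,dx\le\int_0^T|\Lambda^{\alpha/2}\psi|^2\,dx$. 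Next I would translate: set $\Psi(x):=\psi^\star(x-T/2)$. Since $\psi^\star$ is even about $0$ and $T$-antiperiodic, a short symmetry check shows that $\Psi$ is odd and $T$-antiperiodic, so $\Psi\in H^{\alpha/2}_{\rm a,odd}(0,T)$ with $\|\Psi\|=1$; moreover, translation invariance of $\Lambda^\alpha$ gives $\int_0^T|\Lambda^{\alpha/2}\Psi|^2\,dx=\int_0^T|\Lambda^{\alpha/2}\psi^\star|^2\,dx\le\int_0^T|\Lambda^{\alpha/2}\psi|^2\,dx$.

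The remaining, and most delicate, step is to bound the potential energy of $\Psi$ by that of $\psi$. Changing variables, $\int_0^T V\,\Psi^2\,dx=\int_{-T/2}^{T/2}V(y+T/2)\,\psi^\star(y)^2\,dy$. Because $V$ is even and $T$-periodic, $y\mapsto V(y+T/2)$ is even on $(-T/2,T/2)$ and has there the same distribution function as $V$ itself; and under hypothesis (i) the restriction $V|_{(-T/2,T/2)}$ is already symmetric-decreasing about $0$, so that $V(\cdot+T/2)|_{(-T/2,T/2)}$ is precisely its symmetric-\emph{increasing} rearrangement. Since $(\psi^\star)^2$ is symmetric-decreasing about $0$ and coincides with the symmetric-decreasing rearrangement of $\psi^2$, the Hardy--Littlewood inequality for oppositely ordered rearrangements on $(-T/2,T/2)$ gives $\int_{-T/2}^{T/2}V(y+T/2)\,\psi^\star(y)^2\,dy\le\int_{-T/2}^{T/2}V(y)\,\psi(y)^2\,dy=\int_0^T V\psi^2\,dx$, the last equality by $T$-periodicity of $V\psi^2$. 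Combining this with the kinetic estimate yields $\int_0^T\big(|\Lambda^{\alpha/2}\Psi|^2+V\Psi^2\big)\,dx\le\int_0^T\big(|\Lambda^{\alpha/2}\psi|^2+V\psi^2\big)\,dx=\lambda_{\rm e}$; since $\Psi$ is an admissible unit vector in the odd sector, $\lambda_{\rm o}\le\lambda_{\rm e}$, which is (i). For (ii) one starts instead from the odd antiperiodic ground state---positive on $(0,T)$ and even about $T/2$---rearranges it symmetric-decreasingly about $T/2$, translates into the even sector, and uses that under the opposite monotonicity hypothesis $V|_{(0,T)}$ is symmetric-decreasing about $T/2$ whereas its half-period translate is symmetric-increasing; the identical chain of inequalities then produces $\lambda_{\rm e}\le\lambda_{\rm o}$.

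I expect the crux of the write-up to be two bookkeeping issues rather than a single hard estimate: first, that every rearrangement is carried out \emph{within} a fixed even or odd antiperiodic sector---so that the P\'olya--Szeg\H{o} inequality of Appendix~\ref{A:Polya} applies as stated---while the passage between sectors is effected only by the translation-by-$T/2$ map, which is an isometry of $L^2_{\rm a}(0,T)$ and of $H^{\alpha/2}_{\rm a}(0,T)$ that exchanges the even and odd sectors; and second, the identification of $V(\cdot+T/2)$ restricted to a fundamental domain as a one-sided monotone rearrangement of $V$, which is exactly the mechanism that converts the monotonicity hypothesis on $V$ into the favorable Hardy--Littlewood bound on the potential term.
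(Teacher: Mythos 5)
Your proof is correct and follows essentially the same route as the paper: the odd competitor $\Psi=\psi^\star(\cdot-T/2)$ you construct is exactly the rearrangement $\psi^{\#_{2T}}$ used in Appendix~\ref{A:Polya}, the kinetic bound is the antiperiodic P\'olya--Szeg\"o inequality of Lemma~\ref{L:szego}, and your inline Hardy--Littlewood argument for the potential term is precisely the content of Lemma~\ref{L:potentialrearrange}. The only difference is presentational — you unpack the $\#_{2T}$ rearrangement as ``rearrange, then translate by $T/2$'' and rederive the potential estimate on the shifted fundamental domain rather than citing the two lemmas directly.
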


\subsection{Antiperiodic Oscillation Theory}

In addition to the above ground state theories, we require
a Sturm-Liouville type oscillation theory to characterize the possible nodal patterns for the second antiperiodic eigenfunctions of the fractional Schr\"odinger operators $L_{\pm}$.  To this end,
first note that an $H^{\alpha/2}_{\rm a}(0,T)$-eigenfunction of $L_{\pm}$ is necessarily continuous, bounded,
and can be chosen to be real-valued.  Following the ideas in \cite{FL13} and \cite{HJ15}, we proceed
by extending the eigenvalue problem associated to $L_{\pm}$ on $L^2_{\rm a}(0,T)$
to an appropriate local problem in the upper half space.

Note that the operator $\Lambda^\alpha$ acting on $L^2_{\rm a}(0,T)$ can be viewed as the
Dirichlet-to-Neumann operator for a suitable \emph{local} problem in the antiperiodic
half-strip $[0,T]\times(0,\infty)$.  Indeed, following \cite{CS07,RS15}, for a given $\alpha\in(0,2)$,
there exists a constant $C(\alpha)$ such that for any $f\in H^{\alpha}_{\rm a}(0,T)$
we have
\[
C(\alpha)\Lambda^\alpha f:=\lim_{y\to 0^+}y^{1-\alpha}w_y(\cdot,y),
\]
where $w=:E(f)\in C^\infty((0,\infty);H^{\alpha/2}_{\rm a}(0,T))\cap C([0,\infty);L^2_{\rm a}(0,T))$
is the unique solution to the elliptic boundary value problem
\[
\left\{\begin{array}{lcl}
\Delta w+\frac{1-\alpha}{y}w_y=0, & {\rm in } & [0,T]_{\rm antiper}\times(0,\infty)\\
w=f & {\rm on } & [0,T]_{\rm antiper}\times\{0\}
\end{array}\right.
\]
As in \cite{FL13,HJ15}, it follows that the eigenvalue problems 
for $L_{\pm}$ on $L^2_{\rm a}(0,T)$ can be extended
to an eigenvalue problem for a \emph{local} elliptic problem in the antiperiodic upper-half space
$[0,T]_{\rm antiper}\times(0,\infty)$ and, as such, one may derive a variational characterization
for the $T$-antiperiodic eigenvalues and eigenfunctions of $L_{\pm}$.  

If $v\in L^2_{\rm a}(0,T)$ is an eigenfunction associated to $L_+$, say,
then the extension $E(v)$ belongs to $C^0([0,T]_{\rm antiper}\times[0,\infty))$.  Defining
the zero set of $v$ to be 
\[
\mathcal{N}:=\left\{(x,y)\in[0,T]_{\rm antiper}\times[0,\infty):E(v)(x,y)=0\right\},
\]
which is clearly closed in $[0,T]_{\rm antiper}\times[0,\infty)$, we define the
\emph{nodal domains} of $E(v)$ to be the connected components of the open
set $\left([0,T]_{\rm antiper}\times[0,\infty)\right)\setminus\mathcal{N}$.  Recalling
the classical Courant nodal domain theorems yield an upper bound for the number
of nodal domains of $E(v)$ in $[0,T]_{\rm antiper}\times(0,\infty)$, we find the following oscillation result.

\begin{lemma}[Antiperiodic Oscillation Theory]\label{L:oscillation}
Under the hypothesis of Theorem \ref{T:ground}, any even (resp. odd) 
$T$-antiperiodic eigenfunctions of $L$
associated with the second eigenvalue (not counting multiplicity\footnote{That is, 
only the distinct elements of the $T$-antiperiodic spectrum of $L$ are listed.}) has at most two sign
changes over $(-T/2,T/2)$ (resp. $(0,T)$).
\end{lemma}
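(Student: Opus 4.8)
The plan is to adapt the strategy of Frank--Lenzmann \cite{FL13} and Hur--Johnson \cite{HJ15}: extend the antiperiodic eigenvalue problem for $L$ to a \emph{local}, degenerate-elliptic problem in a half-strip, then invoke the Courant nodal-domain theorem. I will treat the even sector; the odd sector is handled identically after adjusting the lateral boundary conditions. Let $\lambda$ denote the second distinct eigenvalue of $L$ restricted to $L^2_{\rm a,even}(0,T)$ and let $v$ be an associated (real-valued, continuous, bounded) eigenfunction. Using the extension $w=E(v)$ described above, $w$ solves $\Delta w+\frac{1-\alpha}{y}w_y=0$ in the open strip, an equation with no zeroth-order term and with the Muckenhoupt $A_2$ weight $y^{1-\alpha}$ (recall $\alpha\in(1,2)$), $w$ is continuous up to $\{y=0\}$, decays as $y\to\infty$, and satisfies $\lim_{y\to0^+}y^{1-\alpha}w_y=C(\alpha)(\lambda-V)v$ on $\{y=0\}$. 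Moreover $w$ inherits the evenness and $T$-antiperiodicity of $v$ in $x$, hence is odd about $x=T/2$; in particular $v(\pm T/2)=0$ and $w\equiv0$ on $\{x=\pm T/2\}$. Restricting to the half-strip $\Omega_T:=(-T/2,T/2)\times(0,\infty)$, the function $w$ therefore solves a self-adjoint, local eigenvalue problem with Dirichlet data on the lateral sides $\{x=\pm T/2\}$ and the Dirichlet-to-Neumann condition on $\{y=0\}$; by uniqueness for this degenerate problem, the eigenvalue problem on $\Omega_T$ has the same spectrum, multiplicities, and eigenspaces as $L$ on $L^2_{\rm a,even}(0,T)$ via $E$, and the $A_2$-regularity theory (Fabes--Kenig--Serapioni, Harnack) legitimizes the variational/min--max machinery.

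Two ingredients then drive the argument. First, a \emph{maximum-principle observation}: since the interior equation has no zeroth-order term, $w$ cannot have a strict interior extremum, so every nodal domain of $w$ must reach $\{y=0\}$ --- a nodal domain compactly contained in $\{y>0\}$ would have $w$ vanishing on its entire topological boundary (using decay as $y\to\infty$ and the Dirichlet sides), forcing $w\equiv0$ there. Consequently, each nodal domain of $w$ meets $\{y=0\}$ along a nonempty open set on which the trace $v$ carries the sign of that nodal domain. Second, \emph{Courant's nodal-domain theorem} for the extended operator on $\Omega_T$ (valid in this weighted setting; cf. \cite[Theorem XIII.44]{ReedSimonIV} and \cite{FL13,HJ15}): since Theorem \ref{T:ground}(a) makes the ground state in the even sector simple, an eigenfunction associated with the second distinct eigenvalue has at most two nodal domains in $\Omega_T$. (Incidentally, orthogonality of $v$ to the sign-definite ground state forces $v$ to change sign, so the bound below is actually attained.)

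It remains to convert the bound of two nodal domains into a bound on the sign changes of $v=w|_{\{y=0\}}$ on $(-T/2,T/2)$, and here one uses that $\Omega_T$ is simply connected. Let $P_0,\dots,P_k$ be the maximal sign-constant subintervals of $v$ in $(-T/2,T/2)$, listed in order, so $k$ is the number of sign changes. By the maximum-principle observation, the region just above each $P_j$ lies in a nodal domain $\mathcal{D}_j$, and $\mathcal{D}_j\neq\mathcal{D}_{j+1}$ since consecutive intervals have opposite signs. A Jordan-curve argument in the simply connected $\Omega_T$ (joining points above $P_i$ and $P_{i'}$ by an arc inside a common nodal domain, and closing it up with the bottom segment between them, bounds a region that traps any intervening nodal domain) shows that two non-consecutive, same-sign intervals can share a nodal domain only by that domain ``arching over'' a trapped nodal domain between them, and that the count of distinct $\mathcal{D}_j$ is minimized precisely by such nested arches; combined with the even reflection symmetry (nodal domains pair up or are reflection-symmetric), pushing this count against the bound of two nodal domains forces $k\le 2$. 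For the odd sector one repeats the argument on $(0,T)\times(0,\infty)$, where $v(0)=v(T)=0$ gives Dirichlet data on both lateral sides, using Theorem \ref{T:ground}(b), and concludes at most two sign changes on $(0,T)$.

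I expect the main obstacle to be this final bookkeeping step --- carefully tracking how nodal domains of $w$ attach to the sign-constant pieces of its trace under the mixed boundary conditions and the reflection symmetry --- together with verifying that Courant's theorem and the min--max characterization genuinely apply to the degenerate weighted operator. A secondary subtlety is the case in which the second distinct eigenvalue fails to be simple in a symmetry sector: then the crude estimate (number of nodal domains $\le$ number of eigenvalues $\le\lambda$, counting multiplicity) is too weak, and one must rule out the extra nodal configurations separately, e.g. by exploiting the positivity-improving property of the semigroup on the even and odd sectors (Lemmas \ref{L:even_heat_kernel_positivity} and \ref{L:odd_heat_kernel_positivity}) or by combining two linearly independent eigenfunctions into one with strictly more sign changes.
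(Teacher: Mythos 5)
Your proposal is correct and follows essentially the same route as the paper: extend the antiperiodic eigenvalue problem to the local degenerate-elliptic problem on the half-strip, apply a Courant nodal-domain argument (using simplicity of the sector ground state from Theorem \ref{T:ground}) to bound the number of nodal domains of $E(v)$ by two, and then convert this into a bound on the sign changes of the trace via a planarity/Jordan-curve argument. The paper's version of the final step is cleaner --- assuming at least three (hence, by evenness, four) sign changes, it picks interleaved points on $\{y=0\}$ of alternating sign and derives a contradiction from the forced crossing of two arcs lying in distinct nodal domains --- and your worry about non-simple second eigenvalues is already handled by the standard Courant argument, since the relevant index is the lowest index of that eigenvalue, which equals two because the ground state is simple.
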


\begin{proof}
The proof follows along the same lines as \cite[Lemma 3.2]{HJ15} and
\cite[Theorem 3.1]{FL13}, and hence we only sketch the details here; see also \cite{HJM17}.  Consider
the spectrum of $L$ acting on $L^2_{\rm a,even}(0,T)$.  Suppose that
$v(x)$ is an even $T$-antiperiodic eigenfunction of $L$ associated with the its second
eigenvalue $\Lambda_2$, and suppose that $v$ has at least three sign changes in $(-T/2,T/2)$.  Clearly,
since $v$ is even, it follows that $v$ actually has at least four sign changes in $(-T/2,T/2)$,
and hence there are points
\[
-T/2<x_1<y_1<x_2<y_2<x_3<T/2
\]
such that, up to switching signs, $v(x_j)>0$ and $v(y_j)<0$.  Now, using the aforementioned
variational characterization of $\Lambda_2$, it follows from a standard Courant
nodal domain argument that the extension $E(v)$ can have at most two nodal domains in 
the strip $(-T/2,T/2)\times(0,\infty)$.  Since the nodal domains are open and connected, thus
pathwise connected, in $(-T/2,T/2)\times(0,\infty)$, we may find continuous curves 
$\gamma_\pm\in C^0([0,1];[-T/2,T/2)\times[0,\infty))$ such that
\[
\gamma_+(0)=x_1,\quad\gamma_+(1)=x_2,\quad\gamma_-(0)=y_1,\quad\gamma_-(1)=y_2
\]
and
\[
E(v)(\gamma_+(t))>0,\quad E(v)(\gamma_-(t))<0\quad{\rm for~ all}~~t\in[0,1].
\]
In particular, $\gamma_+(t)$ belongs to the same nodal domain for all $t\in(0,1)$, denoted $\Omega_+$,
while $\gamma_-(t)$ belongs to the same nodal domain $\Omega_-$ for all $t\in(0,1)$.  By the Jordan
curve theorem, it follows that the curves $\gamma_\pm$ must cross at least once in $(-T/2,T/2)\times(0,\infty)$,
yielding a contradiction.  Alternatively, observe that by joining the points $x_{1,2}$ and $y_{1,2}$
to a point $P$ in $(-T/2,T/2)\times(-\infty,0)$, and then connecting $x_1$ and $y_2$ by a curve
in $(-T/2,T/2)\times(-\infty,0)$, one embeds the complete graph $K_5$ in the plane.  Since $K_5$
is not planar, one again finds a contradiction, which establishes the desired oscillation estimate
for the even eigenfunctions.  A similar argument applies to the odd eigenfunctions. 
\end{proof}

\subsection{Proof of Nondegeneracy}

Now that we have information regarding the $T$-antiperiodic ground state eigenfunctions of $L_{\pm}$ and
the nodal patterns for their second $T$-antiperiodic eigenfunctions, we aim to establish
the nondegeneracy of the linearization $\delta^2 \mathcal{E}(\phi)$.  To this end, for each $\mu>0$ let
$\phi(\cdot;\mu)\in H^{\alpha/2}_{\rm a}(0,T)$ be a real-valued local minimizer of 
$\mathcal{E}(\cdot;\mu):=\mathcal{E}(\cdot;0,\mu)$ over $H^{\alpha/2}_{\rm a}(0,T)$ subject to fixed
$Q(u)=\mu$ and $N(u)=0$.  Then by construction, the second derivative test for constrained
extrema yields
\[
\delta^2 \CalE(\phi) \vert_{\{\delta Q(\phi),\delta N(\phi)\}^\perp} \geq 0,
\]
where here
\[
\braces{ \delta Q(\phi),\delta N(\phi) }^\perp := \braces{ h \in H^{\alpha/2}_{\rm a}([0,T];\mathbb{C}) : \innerprod{\phi}{h} =\innerprod{i\phi'}{h}= 0 } 
\]
denotes the tangent space at $\phi$ to the codimension two constrained subspace 
\[
\Sigma_\mu:=\left\{\psi\in H^{\alpha/2}(0,T):Q(\psi)=\mu,~~N(\psi)=0\right\} 
\]
in $H^{\alpha/2}_{\rm a}(0,T)$. Recall that the inner product $\langle\cdot,\cdot\rangle$ is defined throughout as
\[
\langle u,v\rangle = \Re\int_0^T u \bar{v} ~dx.
\]
By Courant's mini-max principle, this implies that the operator $\delta^2 \mathcal{E}(\phi)$
has at most two negative $T$-antiperiodic eigenvalues.  Specifically, since
$\delta Q(\phi)=\phi$ and $\delta N(\phi)=i\phi'$ are real and imaginary valued, respectively, it follows
that the linear operators $L_+$ and $L_-$ each have at most one negative $T$-antiperiodic
eigenvalue, with
\[
L_+\big{|}_{\{\delta Q(\phi)\}^\perp}\geq 0\quad\textrm{and}\quad L_-\big{|}_{\{\Im(\delta N(\phi))\}^\perp}\geq 0.
\]
A finer description of the spectral properties of $L_{\pm}$ is given below.

\begin{lemma}\label{L:specprop}
Under the hypothesis of Proposition \ref{P:nondegeneracy}, the following are true:
\begin{itemize}
\item[(i)] The operator $\delta^2 \mathcal{E}(\phi)$ acting on $L^2_{\rm a}(0,T)$
has at most one negative eigenvalue, with
\[
L_+\geq 0 \quad \textrm{and}\quad n_-(L_-) \leq  1.
\]
\item[(ii)] $\phi'\in\ker(L_+)$, and it corresponds to the ground state eigenfunction
of $L_+$ restricted to the subspace of odd functions in $H^{\alpha/2}_{\rm a}(0,T)$.
\item[(iii)] $\phi\in\ker(L_-)$, and it corresponds to the ground state eigenfunction
of $L_-$ restricted to the subspace of even functions in $H^{\alpha/2}_{\rm a}(0,T)$.
\item[(iv)] The functions $\phi'$ and $\phi^{2\sigma}\phi'$ are in the range of $L_-$.
\item[(v)] The function $\phi^{2\sigma+1}$ is in the range of $L_+$.
\end{itemize}
\end{lemma}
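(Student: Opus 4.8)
The plan is to assemble parts (i)--(v) from the ground-state and oscillation theory of Section \ref{S:GS_Theory} together with the symmetry structure of the real, even profile $\phi$ and the constrained minimization it satisfies. First, for part (i): since $\phi$ is a local minimizer of $\mathcal{E}(\cdot;\mu)$ subject to $Q(u)=\mu$ and $N(u)=0$, the constrained second-derivative test gives $\delta^2\mathcal{E}(\phi)\vert_{\{\delta Q(\phi),\delta N(\phi)\}^\perp}\geq 0$, and hence by Courant's min-max principle $\delta^2\mathcal{E}(\phi)$ has at most two negative $T$-antiperiodic eigenvalues. Using the diagonal decomposition $\delta^2\mathcal{E}(\phi)={\rm diag}(L_+,L_-)$ and the fact that $\delta Q(\phi)=\phi$ is real-valued while $\delta N(\phi)=i\phi'$ is imaginary-valued, the constraints decouple: $L_+\vert_{\{\phi\}^\perp}\geq 0$ and $L_-\vert_{\{\phi'\}^\perp}\geq 0$, so each of $L_\pm$ has at most one negative eigenvalue. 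To upgrade $n_-(L_+)\leq 1$ to $L_+\geq 0$, I would invoke part (ii): $\phi'\in\ker(L_+)$ is odd (since $\phi$ is even) and, being sign-definite on $(0,T)$ by Lemma \ref{L:c=0_prop} (strict monotonicity of $\phi$ on $(0,T)$ makes $\phi'<0$ there), Theorem \ref{T:ground}(b) identifies it as the \emph{ground state} of $L_+$ restricted to $L^2_{\rm a,odd}(0,T)$; hence $L_+\vert_{\rm odd}\geq 0$ with kernel exactly ${\rm span}\{\phi'\}$ in that sector. On the even sector, $L_+\vert_{\rm even}$ can have at most one negative eigenvalue (from the global bound), but the orthogonality $L_+\vert_{\{\phi\}^\perp}\geq 0$ with $\phi$ itself even forces any negative eigenvalue to have eigenfunction a multiple of $\phi$; testing $\langle L_+\phi,\phi\rangle$ directly, using the profile equation $L_-\phi=0$ and $L_+=L_--2\sigma\phi^{2\sigma}$ (in the defocusing $\gamma=-1$ case, $L_+=\Lambda^\alpha+(2\sigma+1)\phi^{2\sigma}+\omega$ and $L_-=\Lambda^\alpha+\phi^{2\sigma}+\omega$, so $L_+-L_-=2\sigma\phi^{2\sigma}$), we get $\langle L_+\phi,\phi\rangle=2\sigma\int_0^T\phi^{2\sigma+2}\,dx>0$, so $\phi$ is not a negative direction and $L_+\vert_{\rm even}\geq 0$ as well. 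Thus $L_+\geq 0$.

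Part (ii) is then as just described: $L_+\phi'=0$ by translation invariance (differentiate the profile equation in $x$), $\phi'$ is odd and sign-definite on $(0,T)$, and Theorem \ref{T:ground}(b) gives that it is the simple ground state of $L_+\vert_{L^2_{\rm a,odd}(0,T)}$. Part (iii) is the mirror image: $L_-\phi=0$ by the profile equation itself (equivalently by gauge/phase invariance), $\phi$ is even and, by Lemma \ref{L:c=0_prop}, sign-definite on $(-T/2,T/2)$ (strictly positive there), so Theorem \ref{T:ground}(a) identifies it as the simple ground state of $L_-\vert_{L^2_{\rm a,even}(0,T)}$. For part (iv), the key structural observation is that $\phi'$ is \emph{odd}, hence lives in $L^2_{\rm a,odd}(0,T)$, which is orthogonal to $\ker(L_-)={\rm span}\{\phi\}$ (an even function); since $L_-$ is self-adjoint with compact resolvent, ${\rm range}(L_-)=\ker(L_-)^\perp$, and any odd antiperiodic function automatically lies in this orthogonal complement. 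Therefore both $\phi'$ (odd) and $\phi^{2\sigma}\phi'$ (also odd, since $\phi^{2\sigma}$ is even and $\phi'$ is odd) belong to ${\rm range}(L_-)$. Similarly for part (v), $\phi^{2\sigma+1}$ is even (product of the even $\phi^{2\sigma}$ with the even $\phi$), hence orthogonal to $\ker(L_+)={\rm span}\{\phi'\}$ (odd), and since $L_+\geq 0$ is self-adjoint with compact resolvent we again have ${\rm range}(L_+)=\ker(L_+)^\perp$, so $\phi^{2\sigma+1}\in{\rm range}(L_+)$.

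The only genuine subtlety — and the step I would treat most carefully — is verifying in part (ii) that $\phi'$ really is the ground state of $L_+$ \emph{on the odd sector} rather than merely an eigenfunction: this needs sign-definiteness of $\phi'$ on $(0,T)$, which is exactly the strict monotonicity of $\phi$ on $(0,T)$ furnished by Lemma \ref{L:c=0_prop}, combined with Theorem \ref{T:ground}(b)'s characterization that a sign-definite odd antiperiodic eigenfunction must be the ground state of that sector (by simplicity and Perron--Frobenius positivity-improvement from Lemma \ref{L:odd_heat_kernel_positivity}). Everything else is either a symmetry bookkeeping argument (even vs.\ odd) or a direct quadratic-form computation using the relation $L_+-L_-=2\sigma\phi^{2\sigma}$ and $L_-\phi=0$; the orthogonality characterizations of the ranges are standard for self-adjoint operators with compact resolvent, which $L_\pm$ are by the discussion preceding Proposition \ref{P:nondegeneracy}. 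I would present parts (iv) and (v) last since they are immediate consequences of (i)--(iii) once the parity of each relevant function is noted.
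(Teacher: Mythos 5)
Your parts (ii) and (iii) follow the paper's argument, and the ``at most one negative eigenvalue'' bookkeeping in (i) is fine, but there are two genuine gaps. The more serious one is that your proofs of (iv) and (v) are circular: you deduce $\phi'\in\range(L_-)$ from $\range(L_-)=\ker(L_-)^\perp$ together with $\ker(L_-)={\rm span}\{\phi\}$ (and likewise $\phi^{2\sigma+1}\in\range(L_+)$ from $\ker(L_+)={\rm span}\{\phi'\}$). But those kernel identifications are exactly the conclusion of Proposition \ref{P:nondegeneracy}, whose proof uses Lemma \ref{L:specprop}(iv)--(v) as input; at this stage $\ker(L_-)$ could a priori contain an \emph{odd} element $\psi$, and parity gives no reason for $\langle\phi',\psi\rangle=0$ since $\phi'$ is also odd. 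The memberships must instead be produced by exhibiting explicit preimages: $L_+\phi=2\sigma\phi^{2\sigma+1}$ and $L_-\phi'=-2\sigma\phi^{2\sigma}\phi'$ (from $L_+=L_-+2\sigma\phi^{2\sigma}$ together with $L_+\phi'=0$ and $L_-\phi=0$), and---crucially---$L_-\Im\left(\frac{\partial\phi}{\partial c}\right)=-\phi'$, obtained by differentiating the profile equation in the wave speed $c$ at $c=0$ along the family of complex-valued traveling waves from Proposition \ref{P:min} and Lemma \ref{L:c=0_prop}. That last identity is the only available route to $\phi'\in\range(L_-)$ and is the reason the paper constructs the $c\neq0$ family at all (see Remark \ref{r:travel}); your proposal misses it entirely.

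The second gap is in upgrading $n_-(L_+)\leq 1$ to $L_+\geq 0$. The claim that $L_+\vert_{\{\phi\}^\perp}\geq 0$ ``forces any negative eigenvalue to have eigenfunction a multiple of $\phi$'' is false---it only forces the eigenfunction to be non-orthogonal to $\phi$---and positivity of $\langle L_+\phi,\phi\rangle$ does not preclude a negative eigenvalue (the relevant test quantity for such arguments is $\langle L_+^{-1}\phi,\phi\rangle$, cf.\ Lemma \ref{L:focusing_nonnegative_dQ}). The paper closes the even sector via the ground-state ordering of Proposition \ref{P:gsordering}(i): since $\phi$ is positive and decreasing on $(0,T/2)$, the potential of $L_+$ is nonincreasing there, whence $\min\sigma\left(L_+\big|_{L^2_{\rm a,even}(0,T)}\right)\geq\min\sigma\left(L_+\big|_{L^2_{\rm a,odd}(0,T)}\right)=0$. (Alternatively, a putative negative even ground state $w>0$ on $(-T/2,T/2)$ tested against $L_+\phi=2\sigma\phi^{2\sigma+1}$ gives $\lambda_0\langle\phi,w\rangle=2\sigma\langle\phi^{2\sigma+1},w\rangle>0$ with $\lambda_0<0$, a contradiction---but that is not the computation you wrote.)
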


\begin{proof}
First, note that \eqref{E:profile_equation} is equivalent to $L_-\phi=0$, while differentiating the 
profile equation with respect to $x$ gives $L_+\phi'=0$.  Claims (ii) and (iii) now follow immediately
from the ground state theory in Theorem \ref{T:ground} and the monotonicity properties
of $\phi$ guaranteed by Lemma \ref{L:c=0_prop}.  Moreover, by the ordering of the even and odd
$T$-antiperiodic ground states of $L_{\pm}$ given by 
Proposition \ref{P:gsordering},
the fact that $\phi$ and $\phi'$ are even and odd, respectively, implies (i) by the above discussion.

Next, observing that $L_+=L_-+2\sigma\phi^{2\sigma}$, the identities $L_+\phi'=0$ and $L_-\phi=0$ 
immediately imply that $L_-\phi'=-2\sigma\phi^{2\sigma}\phi'$ and $L_+\phi=2\sigma\phi^{2\sigma+1}$.  
Finally, differentiating the profile equation \eqref{E:profile_equation} with respect to $c$ at $c=0$
gives
\[
L_+\Re\left(\frac{\partial\phi}{\partial c}\right)=-\left(\frac{\partial\omega}{\partial c}\Big{|}_{c=0}\right)\phi,
	\quad L_-\Im\left(\frac{\partial\phi}{\partial c}\right)=-\phi'.
\]
The first equation above obviously holds thanks to Lemma \ref{L:c=0_prop}, while the second
shows that $\phi'$ is in the range of $L_-$.  This establishes (iv) and (v).
\end{proof}

\begin{remark}\label{r:travel}
In the proof that $\phi'$ lies in the range of $L_-$ above, we heavily relied on the fact that
the real-valued, $T$-antiperiodic standing profile $\phi(\cdot; \mu)$ is a member of a 
more general family of complex-valued $T$-antiperiodic traveling waves $\phi(\cdot; c,\mu)$ defined for $|c|$ sufficiently small;
see Proposition \ref{P:min} and Lemma \ref{L:c=0_prop}.
In the local case $\alpha=2$, one may of course rely on the Galilean invariance of 
\eqref{E:FNLS1} to produce such a curve of traveling solutions near $c=0$, and differentiating
along this curve yields the same result.  For $\alpha\in(1,2)$, such an (exact)
Galilean invariance does not exist, which is why we had to take extra care in our existence
theory to carefully construct such a curve of solutions for a given $\mu>0$.
\end{remark}

With the above preliminaries in mind, we can now establish the main result of this section.

\

\begin{proof}[Proof of Proposition \ref{P:nondegeneracy}]
First, note that since $\phi^{2\sigma}$ is even and $T$-periodic by construction, 
the subspaces $L^2_{\rm a,odd}(0,T)$ and $L^2_{\rm a,odd}(0,T)$ of even and odd, 
respectively, $T$-antiperiodic functions are invariant subspaces of the operators $L_{\pm}$.  In particular, the operators $L_{\pm}$
respect the orthogonal decomposition
\[
L^2_{\rm a}(0,T)=L^2_{\rm a,odd}(0,T)\oplus L^2_{\rm a}(0,T)_{\rm even}
\]
so that
\[
\sigma\left(L_{\pm}\big{|}_{L^2_{\rm a}(0,T)}\right)=
\sigma\left(L_{\pm}\big{|}_{L^2_{\rm a,odd}(0,T)}\right)\cup
\sigma\left(L_{\pm}\big{|}_{L^2_{\rm a,even}(0,T)}\right).
\]
Since Lemma \ref{L:specprop} implies that	
\[
\ker\left(L_+\big{|}_{L^2_{\rm a,odd}(0,T)}\right)={\rm span}\left\{\phi'\right\}\quad\textrm{and}\quad
\ker\left(L_-\big{|}_{L^2_{\rm a,even}(0,T)}\right)={\rm span}\left\{\phi\right\},
\]
it remains to verify that $\ker\left(L_+\big{|}_{L^2_{\rm a,even}(0,T)}\right)$ and
$\ker\left(L_-\big{|}_{L^2_{\rm a,odd}(0,T)}\right)$ are trivial.

First, suppose there exists a non-trivial solution $v\in L^2_{\rm a,even}(0,T)$ of the equation $L_+v=0$.  
Since $L_+$ is self adjoint on $L^2_{\rm a}(0,T)$, 
the Fredholm alternative implies that $v$ must be orthogonal
to the range of the operator $L_+$ acting on $L^2_{\rm a}(0,T)$.  
Since zero is the ground state eigenvalue
of $L_+$ acting on $L^2_{\rm a}(0,T)$ by Lemma \ref{L:specprop}(i), 
it follows that $v$ is the even ground state eigenfunction for $L_+$ and hence,
by Theorem \ref{T:ground}, may be chosen to be strictly positive on $(-T/2,T/2)$.
To reach the desired contradiction, observe that Lemma \ref{L:specprop}(v) implies 
the function $\phi^{2\sigma+1}$ is in the range of $L_+$ acting on $L^2_{\rm a}(0,T)$.
Since $\phi$ is positive on $(-T/2,T/2)$ we have
\[
\int_0^Tv(x)\phi^{2\sigma+1}(x)dx\neq 0,
\]
contradicting the Fredholm alternative.
Consequently, $\ker(L_+|_{L^2_{\rm a,even}(0,T)})=\{0\}$ and hence
\begin{equation}\label{E:ker_Lminus}
\ker\left(L_+\right)={\rm span}\left\{\phi'\right\},
\end{equation}
verifying the nondegeneracy of $L_+$ on $L^2_{\rm a}(0,T)$.

Next, we turn our attention to $L_-$.
First, we claim that $L_-$ has exactly one negative $T$-antiperiodic eigenvalue.  To this end, suppose (to show a contradiction) that $L_-$ does not have a negative eigenvalue.  Since $L_- \phi = 0$ and $\phi$ is even, the ground state ordering 
(Proposition \ref{P:gsordering}) implies that there exists $\psi \in L_{\text{a,odd}}^2(0,T)$ such that $L_- \psi = 0$ with $\psi$ being the ground state on $L_- |_{L_{\text{a,odd}}^2(0,T)}$.  Then by Theorem \ref{T:ground} (b), $\psi$ is sign-definite on $(0,T)$, hence $\left< \psi, \phi' \right> \neq 0$, i.e. $\phi$ is not orthogonal to $\phi'$.  But $\phi' \in \range\left( L_-|_{L^2_{\text{a,odd}}(0,T)} \right) = \ker\left( L_-|_{L^2_{\text{a,odd}}(0,T)} \right)^\perp$, a contradiction of the Fredholm alternative.  Thus it must be that $n_-(L_-) \geq 1$, which implies that $n_-(L_-) = 1$ by Lemma \ref{L:specprop} (i).  Now, since $L_-\phi=0$, it follows that $\lambda=0$ is the second eigenvalue of $L_-$ acting on $L^2_{\rm a}(0,T)$.  As above, suppose there exists a nontrivial solution $v\in L^2_{\rm a,odd}(0,T)$ to the equation $L_-v=0$.  By Lemma \ref{L:oscillation}, $v$ may change signs at most twice on $(0,T)$.  We will show that such a nontrivial $v$ cannot exist by again using the Fredholm alternative.  To this end, note that if $v$ has a fixed sign on $(0,T)$, then using
that $\phi'<0$ on $(0,T)$ we have
\begin{align*}
\int_{0}^{T}v(x)\phi'(x)~dx\neq 0,
\end{align*}
which contradicts the Fredholm alternative since $\phi' \in \range(L_-)$ by Lemma \ref{L:specprop}(iv).
Thus, any non-trivial $v \in \ker\parens{L_-}$ 
must change signs at least once in $(0,T)$ and, since odd $T$-antiperiodic functions are even
about $x=T/2$, such a function must have exactly two sign changes in $(0,T)$; one at some $x=x_0\in(0,T/2)$ and the other at $x=T-x_0\in(T/2,T)$.
Define
\[
\eta(x) := \phi'(x) \parens{ \phi(x)^{2\sigma} - \phi(x_0)^{2\sigma} } 
\]
and note that $\eta\in\range(L_-)$ by Lemma \ref{L:specprop}(iv) and, further,
$\eta$ changes signs at $x=x_0$ and $x=T-x_0$, by monotonicity of $\phi$ on $(0,T)$.
Consequently, $\int_0^T\eta(x)v(x)dx\neq 0$ again contradicting the Fredholm alternative.
Thus, it must be that $\ker\parens{L_- \big|_{L^2_{\rm a,odd}(0,T)}} = \{0\}$, and we conclude that
\begin{equation}  \label{E:ker_Lplus}
\ker\parens{L_-} = {\rm span}\{ \phi \},
\end{equation}
verifying the nondegeneracy of $L_-$ on $L^2_{\rm a}(0,T)$.

Finally, since $\CalL = \diag(L_+,L_-)$ is diagonal, we have by \eqref{E:ker_Lminus} and \eqref{E:ker_Lplus} that
\[
\ker(\CalL) = {\rm span}\left\{
\left(\begin{array}{c}\phi' \\ 0 \end{array}\right),~ \left(\begin{array}{c} 0 \\ \phi \end{array}\right)\right\},
\]
establishing Proposition \ref{P:nondegeneracy}.
\end{proof}

Before proceeding, we point out an interesting observation regarding the parameterization of the profiles
$\phi(\cdot;c,\mu)$ in a neighborhood of a given $(c,\mu)=(0,\mu_0)$ with $\mu_0>0$.  From the view
point of special solutions to the given PDE, one might prefer that the profiles be parameterized the Lagrange multiplier $\omega$ instead of by the $L^2$-norm of the solution.  By the Implicit
Function Theorem, the manifold of nearby profiles $\phi(\cdot;c,\mu)$ can be reparameterized
in a $C^1$ manner by the parameters $(c,\omega)$ near $(c,\mu)=(0,\mu_0)$ provided $\omega$ depends in a $C^1$ manner on $c$ and $\mu$ and that the Jacobian matrix
\begin{equation}\label{E:jacobian1}
\frac{\partial(c,\omega)}{\partial(c,\mu)}=
\left(\begin{array}{cc}1 & 0 \\ \frac{\partial\omega}{\partial c} &\frac{\partial\omega}{\partial\mu}
\end{array}\right)
\end{equation}
is non-singular at $(c,\mu)=(0,\mu_0)$ or, equivalently, if $\frac{\partial\omega}{\partial\mu}(0,\mu_0)\neq 0$.
To see that the above Jacobian is indeed non-singular, observe that by differentiating the profile
equation \eqref{E:profile_equation} with respect to $\mu$ at $c=0$ yields\footnote{Throughout this informal discussion, we will make the blanket assumption that
$\omega$ and the profile $\phi$ depend smoothly on $c$ and $\mu$.} the identity
\[
L_+ \left( \frac{\partial\phi}{\partial\mu} \right) = -\left(\frac{\partial\omega}{\partial\mu}\right)\phi.
\]
Thus, if it were the case that $\frac{\partial\omega}{\partial\mu}=0$ at $(c,\mu)=(0,\mu_0)$,
then by Proposition \ref{P:nondegeneracy} it must be that 
$\frac{\partial\phi}{\partial\mu}=A\phi'$ for some constant $A\in\RM$, which is impossible
since $\frac{\partial\phi}{\partial\mu}$ is even, and $\phi'$ is odd.  As a result, the nondegeneracy
result Proposition \ref{P:nondegeneracy} implies that the Jacobian matrix \eqref{E:jacobian1}
is necessarily non-singular, and hence, if desired, we could consider the real-valued solutions
constructed in Lemma \ref{L:c=0_prop} to be parameterized by the wave speed $c$ and 
the temporal frequency (Lagrange multiplier) $\omega$.

However, while the parameterization by $(c,\omega)$ may seem more natural mathematically, especially
from the perspective of obtaining explicit formulae, it is actually more natural from the standpoint
of \emph{local dynamics} to attempt to reparameterize the family $\phi(\cdot; c,\mu)$ completely in terms of the conserved
quantities of the PDE flow generated by \eqref{E:FNLS1}.  Indeed, it has been observed
in several different contexts that a key ingredient in understanding the local dynamics near special solutions,
for example traveling waves, is that, locally, the manifold of special solutions can be 
parameterized by the conserved quantities of the PDE flow.  In the case of Hamiltonian evolutionary PDE,
this typically arises as a ``$\frac{dP}{dc}\neq 0"$ type condition, where the $P$ is a charge functional associated
via Noether's theorem to continuous Lie point symmetry of the equation: see \cite{PW92,BSS87,Bona75,Lin08} in the case
of traveling solitary waves, and \cite{GH07,HJ15,MJ09,BJK11,BJK14} in the case of periodic traveling waves,
while this observation has likewise been made in the context of partially dissipative systems of conservation
or balance laws in \cite{JNRZ14}.
In the present context, the profiles $\phi(\cdot;c,\mu)$ may be locally reparameterized in a $C^1$ manner 
by the conserved quantities $(N,Q)$ provided that the Jacobian matrix
\begin{equation}\label{E:jacobian2}
\frac{\partial(N,Q)}{\partial(c,\mu)}=
\left(\begin{array}{cc}\frac{\partial N}{\partial c} & \frac{\partial N}{\partial\mu} \\ 0  & 1\end{array}\right)
\end{equation}
is non-singular at a given $(c,\mu)$ or, equivalently, that $\frac{\partial N}{\partial c}\neq 0$ at the given
wave.  As seen in Theorem \ref{T:orbital_stability} below, the condition $\frac{\partial N}{\partial c}\neq 0$
ensures the nonlinear orbital stability of the waves $\phi(\cdot;c,\mu)$ constructed here.

We end this section by demonstrating that the non-singularity of the Jacobian matrix \eqref{E:jacobian2} 
ensures that the generalized $L^2_{\rm a}(0,T)$-kernel of the linearized operator associated with \eqref{E:FNLS1}
supports a Jordan structure, which plays a central role in the forthcoming stability analysis.
Note that linearizing \eqref{E:FNLSrot} about $\phi(\cdot;\mu)$ yields the linear system
\[
v_t=-i\delta^2 \mathcal{E}(\phi)v,
\]
which, by taking the Laplace transform in time and decomposing into real and imaginary parts, 
leads one to the spectral problem
\[
\left(\begin{array}{cc}0&1\\-1&0\end{array}\right)\left(\begin{array}{cc}L_+&0\\0&L_-\end{array}\right)
\left(\begin{array}{c}\Re(v)\\\Im(v)\end{array}\right)=\lambda\left(\begin{array}{c}\Re(v)\\\Im(v)\end{array}\right).
\]

\begin{proposition}[Jordan Block Structure] \label{P:Jordan}
Under the hypotheses of Proposition \ref{P:nondegeneracy}, zero is a $T$-antiperiodic generalized
eigenvalue of the linearized operator
\[
J\mathcal{L}:=\left(\begin{array}{cc}0&1\\-1&0\end{array}\right)\left(\begin{array}{cc}L_+&0\\0&L_-\end{array}\right)
\]
associated to the profile $\phi(\cdot;c=0,\mu)$ with algebraic multiplicity four and geometric multiplicity two
provided that $\frac{\partial N}{\partial c}(\phi(\cdot;c,\mu))$ is non-zero at $c=0$.
\end{proposition}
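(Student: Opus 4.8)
The plan is to run the Jordan-chain bookkeeping familiar from the Grillakis--Shatah--Strauss stability framework \cite{GSS1,GSS2}, with Proposition \ref{P:nondegeneracy} turning every solvability question into a clean Fredholm computation for the self-adjoint operators $L_\pm$ of \eqref{D:Lplus}--\eqref{D:Lminus}. Writing a perturbation as the real pair $(p,q) = (\Re v, \Im v) \in L^2_{\rm a}(0,T)^2$, one has $J\mathcal{L}(p,q) = (L_- q, -L_+ p)$, so $(p,q) \in \ker(J\mathcal{L})$ exactly when $L_+ p = 0$ and $L_- q = 0$; by Proposition \ref{P:nondegeneracy} this forces $(p,q) \in \mathrm{span}\{(\phi',0),(0,\phi)\}$, giving geometric multiplicity precisely two. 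Since $\mathcal{L} = \mathrm{diag}(L_+,L_-)$ has compact resolvent and $J$ is bounded with bounded inverse, $0$ is an isolated eigenvalue of $J\mathcal{L}$ of finite algebraic multiplicity, so the generalized kernel stabilizes after finitely many steps and it only remains to compute its dimension.

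First I would exhibit two Jordan chains of length two. For the translational chain set $g_1 := (\phi',0) \in \ker(J\mathcal{L})$; differentiating the profile equation \eqref{E:profile_equation} in $c$ at $c=0$, and using that $\omega$ is even in $c$ (so $\partial_c\omega(0,\mu)=0$) together with the fact that $\partial_c\phi(\cdot;0,\mu)$ is purely imaginary, one obtains $L_-\psi = -\phi'$ for $\psi := \Im\partial_c\phi(\cdot;0,\mu)$, which is precisely the identity recorded in Lemma \ref{L:specprop}(iv) and is made available in the nonlocal setting only because the real standing profile was embedded into a genuine family of traveling profiles (Lemma \ref{L:c=0_prop}, Remark \ref{r:travel}). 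Then $g_2 := (0,-\psi)$ satisfies $J\mathcal{L}g_2 = g_1$. For the gauge chain set $h_1 := (0,\phi) \in \ker(J\mathcal{L})$; the nondegeneracy of $\delta^2\mathcal{E}(\phi)$ forces $\partial_\mu\omega(0,\mu) \neq 0$ (this is exactly the argument given immediately after Proposition \ref{P:nondegeneracy} around \eqref{E:jacobian1}, via $L_+\partial_\mu\phi = -(\partial_\mu\omega)\phi$), so $\chi := (\partial_\mu\omega)^{-1}\partial_\mu\phi$ solves $L_+\chi = -\phi$ and $h_2 := (\chi,0)$ satisfies $J\mathcal{L}h_2 = h_1$. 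The vectors $g_1,g_2,h_1,h_2$ are linearly independent, since $\chi \notin \ker L_+$ and $\psi \notin \ker L_-$; hence the algebraic multiplicity of $0$ is at least four. In fact, using the nondegeneracy of $L_\pm$ and the relations $L_+\chi = -\phi$, $L_-\psi = -\phi'$ together with $\langle\phi,\phi'\rangle = 0$, one identifies $\ker((J\mathcal{L})^2) = \mathrm{span}\{g_1,g_2,h_1,h_2\}$, a space of dimension exactly four.

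It remains to show that no chain has length three --- equivalently, that $\ker((J\mathcal{L})^3) = \ker((J\mathcal{L})^2)$ --- and this is where the hypothesis $\partial_c N \neq 0$ enters. A length-three extension of the translational chain would require $L_+ a = \psi$ to be solvable, which by self-adjointness of $L_+$ and $\ker L_+ = \mathrm{span}\{\phi'\}$ happens iff $\langle\psi,\phi'\rangle = 0$; but a direct computation from $\delta N(\phi) = i\phi'$ and $\partial_c\phi(\cdot;0,\mu) = i\psi$ gives $\partial_c N(\phi)|_{c=0} = \langle i\phi', i\psi\rangle = \langle\phi',\psi\rangle$, so the hypothesis $\partial_c N \neq 0$ forces $\langle\psi,\phi'\rangle \neq 0$ and no such extension exists. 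Symmetrically, a length-three extension of the gauge chain would require $L_- b = \chi$ to be solvable, i.e. $\langle\chi,\phi\rangle = 0$; but $\langle\chi,\phi\rangle = (\partial_\mu\omega)^{-1}\langle\partial_\mu\phi,\phi\rangle = (\partial_\mu\omega)^{-1}\partial_\mu Q(\phi) = (\partial_\mu\omega)^{-1} \neq 0$. Packaging these two obstructions yields $\ker((J\mathcal{L})^3) = \ker((J\mathcal{L})^2)$, so the generalized kernel is exactly $\mathrm{span}\{g_1,g_2,h_1,h_2\}$ and the algebraic multiplicity equals four, as claimed.

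I do not anticipate a genuine analytic obstacle: once Proposition \ref{P:nondegeneracy} is in hand, every step is either a Fredholm alternative for the self-adjoint operators $L_\pm$ or an identity obtained by differentiating \eqref{E:profile_equation} in $c$ or in $\mu$. The only delicate points are organizational --- keeping track of the finite-dimensionality of the generalized kernel so that the chain analysis terminates, and correctly matching the solvability obstruction $\langle\psi,\phi'\rangle$ for the translational chain with the quantity $\partial_c N$. The latter is the ``$dP/dc \neq 0$'' nondegeneracy condition familiar from the stability theory of periodic waves (cf. \eqref{E:jacobian2} and \cite{GH07,HJ15}), and it is precisely the reason the existence theory of Section \ref{S:existence} was arranged so that $\phi(\cdot;0,\mu)$ sits inside a one-parameter family of traveling profiles $\phi(\cdot;c,\mu)$.
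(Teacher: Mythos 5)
Your proposal is correct and follows essentially the same route as the paper: identify $\ker(J\mathcal{L})$ from Proposition \ref{P:nondegeneracy}, build the two height-two Jordan chains by differentiating the profile equation in $c$ and in $\mu$ (using that the real standing wave sits inside the traveling family), and close the chains via the Fredholm alternative, with the obstructions being exactly $\partial_c N(\phi)\neq 0$ and $\partial_\mu Q(\phi)=1\neq 0$. If anything, your write-up is more careful than the paper's, which asserts rather than verifies that $\ker((J\mathcal{L})^2)$ is exactly four-dimensional and that no chain extends to height three.
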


\begin{proof}
By Proposition \ref{P:nondegeneracy}, we already know that zero is a $T$-antiperiodic eigenvalue
of both $L_{\pm}$ with algebraic multiplicity at least one and geometric multiplicity precisely one.  
Since the skew-adjoint operator $J$ is invertible, it follows from Lemma \ref{L:specprop} 
that the operator $J\mathcal{L}$ has zero as a generalized eigenvalue with geometric multiplicity two and algebraic multiplicity at least four with
\[
\ker\left(J\mathcal{L}\right)={\rm span}\left\{\left(\begin{array}{c}\phi'\\0\end{array}\right),~
\left(\begin{array}{c}0\\ \phi\end{array}\right)\right\}\subset{\rm range}\left(J\mathcal{L}\right)
\]
and
\begin{equation}\label{E:genker}
J\mathcal{L}\left(\begin{array}{c}0 \\ \Im\left(\frac{\partial\phi}{\partial c}\right)\end{array}\right)
=\left(\begin{array}{c}-\phi'\\ 0 \end{array}\right),~~
J\mathcal{L}\left(\begin{array}{c}\xi\\ 0\end{array}\right)
=\left(\begin{array}{c}0\\ \phi\end{array}\right),
\end{equation}
where $\xi$ is an even,  $T$-antiperiodic solution $L_+\xi=\phi$, which is guaranteed to exist\footnote{If one assumes $\phi$ and $\omega$ depend smoothly on $\mu$,
one can take $\xi=\left(\frac{\partial\omega}{\partial\mu}\right)^{-1}\phi_\omega$, as may be verified by differentiating the profile equation
\eqref{E:profile_equation} with respect to $\mu$ at $c=0$.} by Proposition \ref{P:nondegeneracy}.
By the Fredholm alternative, the above Jordan chains terminate at height two provided that the functions
$\left(\begin{array}{c}0 \\ \Im\left(\frac{\partial\phi}{\partial c}\right)\end{array}\right)$ and
$\left(\begin{array}{c}\frac{\partial\phi}{\partial\mu}\\ 0\end{array}\right)$ are orthogonal
to $\ker(J\mathcal{L})=J\ker(\mathcal{L})$, i.e. provided that the quantities
\[
\int_0^T \phi'\Im\left( \frac{\partial \phi}{\partial c} \right)dx=-\frac{\partial N}{\partial c}(\phi)\quad\textrm{and}\quad
\int_0^T \xi\phi~dx = \left<\xi,L_+\xi\right>
\]
are both non-zero.
Since $\xi$ is even, Proposition \ref{P:nondegeneracy} implies $\left<\xi,L_+\xi\right>> 0$.  It follows that
zero has algebraic multiplicity exactly four so long as $\frac{\partial N}{\partial c}\neq 0$, as claimed.
\end{proof}

\begin{lemma}\label{L:NcSign}
If $N(\phi(\cdot;c))$ is differentiable at $c=0$, then $\frac{\partial N}{\partial c}(\phi(\cdot;c)) \Big|_{c=0} \leq 0$.
\end{lemma}

\begin{proof}
Since $\phi(\cdot;c)$ minimizes $F_c$ subject to fixed $Q$, we know that for all $c$ we have
\[
F_c(\phi(\cdot;c))\leq F_c(\overline{\phi(\cdot;c)}).
\]
It follows that
\[
\mathcal{H}(\phi(\cdot;c))+cN(\phi(\cdot;c))\leq \mathcal{H}(\overline{\phi(\cdot;c)})+cN(\overline{\phi(\cdot;c)})
\]
and hence, using the forms of $\mathcal{H}$ and $N$, that $cN(\phi(\cdot;c))\leq -cN(\phi(\cdot,c))$.  Consequently, $cN(\phi(\cdot;c))\leq 0$.
Similarly, we have that $-cN(\phi(\cdot;-c))\leq 0$ so that adding and dividing by $2c^2$ gives
\[
\frac{N(\phi(\cdot;c))-N(\phi(\cdot;-c))}{2c}\leq 0. 
\]
Taking $c\to 0$ implies that $\frac{\partial N}{\partial c}(\phi(\cdot;c)) \Big|_{c=0} \leq 0$, as claimed.
\end{proof}
Alternatively, the above lemma can also be seen from the viewpoint of an index theorem: see Remark \ref{R:sign_condition2} below.

\section{Stability of Constrained Energy Minimizers}  \label{S:stability_of_minimizers}

Let $T>0$ and $\mu_0>0$ be fixed and let $\phi_0:=\phi(\cdot;\mu_0)$ denote a real-valued, $T$-antiperiodic
solution of the nonlocal profile equation \eqref{E:profile_equation} with $c=0$ satisfying $Q(\phi_0)=\mu_0$,
whose existence is guaranteed by Proposition \ref{P:min} and Lemma \ref{L:c=0_prop}.  
The profile $\phi_0$ is thus an equilibrium solution of the PDE 
\begin{equation}\label{E:FNLSrot2}
iu_t-\omega_0u-\Lambda^\alpha u-|u|^{2\sigma}u=0,
\end{equation}
where here $\omega_0:=\omega(0,\mu_0)$.  In this section, we wish to consider the stability of $\phi_0$
under the evolution of \eqref{E:FNLSrot2} to general complex-valued, $T$-antiperiodic perturbations or, equivalently,
the stability of the standing wave solution $u(x,t;\mu_0)=e^{i\omega_0 t}\phi_0(x)$ under the evolution of
\eqref{E:FNLS1} to such perturbations.

\

For $\alpha>1$ and $\sigma>0$, an iteration argument reveals that the Cauchy problem for \eqref{E:FNLSrot2}
is locally in time well-posed in $H^{1/2+}_{\rm a}([0,T];\mathbb{C})$. Furthermore, using conservation
laws these local solutions can be extended to global ones in $H^{\alpha/2}_{\rm a}([0,T];\mathbb{C})$
provided the initial data is in $H^{\alpha/2}_{\rm a}(0,T)$.  Throughout our analysis we work on an 
appropriate subspace $X$ of $H^{\alpha/2}_{\rm a}([0,T];\mathbb{C})$ where the Cauchy problem associated with
\eqref{E:FNLSrot} is locally well-posed and where the functionals $\mathcal{H},Q,N:X\to\mathbb{R}$ are smooth. 

\

Observe that the evolution defined by \eqref{E:FNLS1}, and hence of \eqref{E:FNLSrot2}, is invariant
under a two-parameter group of symmetries generated by spatial translations and unitary
phase rotations.  For each $\phi\in X$ this motivates us to define the group orbit
\[
\mathcal{O}_\phi:=\left\{e^{i\beta}\phi(\cdot-x_0):(\beta,x_0)\in\mathbb{R}^2\right\}\subset X.
\]
Loosely speaking, we say that the standing wave $\phi_0(\cdot;\mu_0)$ is \emph{orbitally stable} 
if the group orbit $\mathcal{O}_{\phi_0}$ is stable under the evolution of \eqref{E:FNLSrot2}, i.e. 
if solutions of \eqref{E:FNLSrot2} remain close in the $X$-norm to $\mathcal{O}_{\phi_0}$ 
for all future times provided their initial data is sufficiently close in the $X$-norm 
to $\mathcal{O}_{\phi_0}$.  We elaborate further below.

\

Our treatment of the orbital stability problem is inspired by the Lyapunov method.  Setting
\begin{equation}\label{E:lag2}
\mathcal{E}_0(u) :=\mathcal{H}(u)+\omega_0 Q(u),
\end{equation}
we recall from Lemma \ref{L:c=0_prop} that $\phi_0$
is a critical point of $\mathcal{E}_0$, i.e. that $\delta\mathcal{E}_0(\phi)=0$, or, equivalently,
that $\phi_0$ is a critical point of $\mathcal{H}$ subject to fixed $Q(u)=\mu_0$ and $N(u)=0$.
Furthermore, Proposition \ref{P:nondegeneracy} implies that the kernel of 
the Hessian $\delta^2 \mathcal{E}_0(\phi_0)$ is generated by the translation and phase rotation symmetries.  Intuitively, we expect the group orbit of $\phi_0$ to be stable provided the operator 
$\delta^2 \mathcal{E}_0(\phi_0)$ is convex (in an appropriate sense) at $\phi_0$.
Below, we shall demonstrate this convexity and hence establish the orbital stability
of the standing wave $\phi_0$ under the evolution of \eqref{E:FNLSrot2}.  
We now state the main result of this section.

\begin{theorem}[Orbital Stability]  \label{T:orbital_stability}
Suppose $\alpha \in (1,2]$, and let $\phi_0 = \phi(\cdot;\mu_0)=\phi(\cdot;c=0,\mu_0)\in H^{\alpha/2}_{\rm a}(0,T)$ 
be a real-valued, 
$T$-antiperiodic local minimizer of $\CalH$ subject to $Q(u)=\mu_0$ and $N(u)=0$ as constructed
in Lemma \ref{L:c=0_prop}.  Suppose in addition that both $\phi$ and the associated Lagrange multiplier $\omega$
depend on $\mu$ and $c$ in a $C^1$ manner near $(\mu,c)=(\mu_0,0)$.
If \begin{equation}\label{E:pos}
\partial_c N(\phi(\cdot;c,\mu_0))<0\quad\textrm{at}\quad c=0,
\end{equation}
then for all $\varepsilon > 0$ sufficiently small, there exists a constant $C = C(\varepsilon)$ such that 
if $v \in X$ with $\norm{v}_X \leq \varepsilon$ and $N(\phi(\cdot;\mu_0)+v)=0$, and if 
$u(\cdot,t)$ is a local in time solution  of \eqref{E:FNLSrot2} with initial data $u(\cdot,0) = \phi_0 + v$, 
then $u(\cdot,t)$ can be continued to a solution for all $t>0$ and
\[
\sup_{t>0}\inf_{(\beta,x_0)\in\mathbb{R}^2}\left\|u(\cdot,t)-e^{i\beta}\phi_0(\cdot-x_0)\right\|_{X} \leq C\|v\|_{X}.
\]
\end{theorem}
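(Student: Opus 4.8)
The plan is to follow the classical Lyapunov/Grillakis–Shatah–Strauss strategy, but exploiting the diagonal structure $\mathcal{E}_0'' = \diag(L_+,L_-)$ and the nondegeneracy already in hand from Proposition \ref{P:nondegeneracy}. First I would record that $\phi_0$ is a critical point of $\mathcal{E}_0$ constrained to the codimension-one manifold $\{N(u)=0\}$, and that the perturbations in the theorem are restricted to exactly this manifold. The key coercivity claim is that there exist constants $\delta>0$ and $C>0$ such that
\[
\mathcal{E}_0(w) - \mathcal{E}_0(\phi_0) \geq \delta\,\inf_{(\beta,x_0)\in\mathbb{R}^2}\|w - e^{i\beta}\phi_0(\cdot-x_0)\|_X^2
\]
for all $w$ near $\mathcal{O}_{\phi_0}$ with $N(w)=0$ and $Q(w)=\mu_0$. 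Since $\mathcal{H}$, $Q$, $N$ are conserved and $\mathcal{E}_0 = \mathcal{H}+\omega_0 Q$, this inequality together with $\mathcal{E}_0(u(\cdot,t)) = \mathcal{E}_0(\phi_0+v)$ and $\mathcal{E}_0(\phi_0+v) - \mathcal{E}_0(\phi_0) = O(\|v\|_X^2)$ (Taylor expansion, using $\delta\mathcal{E}_0(\phi_0)=0$ and smoothness of the functionals on $X$) immediately yields the stated bound by a standard continuity-in-time argument; one also uses $Q$-conservation to reduce to the fixed-charge slice, absorbing the discrepancy $Q(\phi_0+v)-\mu_0=O(\|v\|_X)$ by a smooth one-parameter adjustment of $\phi_0$ along the $\mu$-family, which is legitimate since $\partial_\mu Q(\phi(\cdot;\mu))=1\neq 0$.

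The heart of the matter is the coercivity estimate, and I would prove it by analyzing the quadratic form $\langle \delta^2\mathcal{E}_0(\phi_0) h, h\rangle = \langle L_+ a, a\rangle + \langle L_- b, b\rangle$ for $h = a+ib$ restricted to the subspace
\[
\mathcal{M} := \{h \in H^{\alpha/2}_{\rm a}(0,T) : \langle \phi_0, h\rangle = 0,\ \langle i\phi_0', h\rangle = 0,\ \langle \Im(\partial_c\phi), h\rangle = 0\},
\]
i.e. orthogonal to the tangent directions of the constraint manifolds $\{Q=\mu_0\}$, $\{N=0\}$ and, crucially, to the generalized kernel direction that Proposition \ref{P:Jordan} identifies. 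On the real part: Lemma \ref{L:specprop}(i) gives $L_+\geq 0$ with $\ker L_+ = \mathrm{span}\{\phi_0'\}$ (even sector), and since $\phi_0'$ is odd while $\phi_0$ is even, the constraint $\langle\phi_0,a\rangle=0$ does not kill the kernel direction — but here I exploit that in the theorem's restricted problem one is free to choose the spatial translation parameter $x_0$ to remove the $\phi_0'$ component, so on the translation-fixed slice $L_+$ is coercive. On the imaginary part: $L_-$ has $n_-(L_-)=1$ and $\ker L_- = \mathrm{span}\{\phi_0\}$; the one negative direction must be controlled by the two constraints $\langle i\phi_0',\cdot\rangle=0$ (equivalently $\langle \phi_0, b\rangle = 0$ after identifying real/imaginary parts — wait, more precisely $N$-linearization pairs $\Im h$ against $\phi_0'$) and $\langle \Im(\partial_c\phi), b\rangle = 0$. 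The standard index-counting lemma (Grillakis–Shatah–Strauss, or Lemma-type arguments as in \cite{GH07,Lin08}) then says: $L_-$ restricted to $\{b : \langle \phi_0, b\rangle = 0\}$ is nonnegative with kernel $\mathrm{span}\{\phi_0\}$ precisely when $\langle L_-^{-1}\phi_0',\phi_0'\rangle < 0$, and by Lemma \ref{L:specprop}(iv) we have $L_-\Im(\partial_c\phi) = -\phi_0'$, so $\langle L_-^{-1}\phi_0',\phi_0'\rangle = -\langle \Im(\partial_c\phi),\phi_0'\rangle = -\partial_c N(\phi)|_{c=0}$, which is negative (or at least nonzero, giving the right sign after the index bookkeeping) by hypothesis \eqref{E:pos}. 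This is exactly where condition \eqref{E:pos} enters.

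The main obstacle I anticipate is twofold. First, the index-counting lemma must be adapted to the antiperiodic Hilbert space $L^2_{\rm a}(0,T)$ and to the fact that the phase/translation symmetries act and that the coercivity must be stated modulo the group orbit, not merely on a fixed codimension-two subspace; care is needed that the quadratic form is coercive on a \emph{complement} of $\ker\delta^2\mathcal{E}_0(\phi_0) = \mathrm{span}\{\phi_0', i\phi_0\}$ inside the constraint tangent space, and that this translates (via a tubular-neighborhood / implicit-function argument parametrizing points near $\mathcal{O}_{\phi_0}$ by $(\beta, x_0)$ plus a transverse piece) into the nonlinear estimate. Second, one must verify that the abstract global well-posedness in $X$ and the smoothness of $\mathcal{H}, Q, N$ on $X$ genuinely hold for $\alpha\in(1,2)$ and all $\sigma>0$ on $H^{\alpha/2}_{\rm a}$; this is asserted in the text preceding the theorem, so I would simply cite it, but the continuation argument requires that solutions stay in the $\varepsilon$-tube long enough for the Lyapunov functional bound to propagate, which is the routine bootstrap: as long as $\|u(\cdot,t)\|_X$ stays bounded, the solution extends, and the coercivity forbids it from leaving the tube. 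With these pieces assembled the theorem follows.
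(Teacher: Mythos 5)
Your proposal follows essentially the same route as the paper: a Lyapunov argument built on coercivity of $\mathcal{E}_0$ modulo the symmetry group on the constraint manifold, with modulation parameters supplied by the implicit function theorem, conservation of $\mathcal{H}$, $Q$, $N$, and an adjustment along the $\mu$-family (using $\partial_\mu Q=1$) to absorb the charge mismatch when $Q(\phi_0+v)\neq\mu_0$. The paper's Lemma \ref{L:infimum} and Proposition \ref{P:coercivity} implement exactly the decomposition, modulation, and Taylor expansion you describe.

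The one place where your write-up diverges, and where as written it is not airtight, is the positivity of $L_-$ on $\{\phi_0'\}^\perp$. You invoke the Grillakis--Shatah--Strauss index count, which requires the \emph{strict sign} $\innerprod{L_-^{-1}\phi_0'}{\phi_0'}=-\partial_c N(\phi(\cdot;c,\mu_0))\big|_{c=0}<0$, whereas hypothesis \eqref{E:pos} only asserts $\partial_c N\neq 0$; your parenthetical ``or at least nonzero, giving the right sign after the index bookkeeping'' glosses over this. The gap is closable, and the paper's argument shows how: because $\phi_0$ is a constrained local \emph{minimizer}, the second derivative test already yields $L_-|_{\{\phi_0'\}^\perp}\geq 0$ for free (Lemma \ref{L:specprop}(i)), which by the index formula forces $\innerprod{L_-^{-1}\phi_0'}{\phi_0'}\leq 0$; condition \eqref{E:pos} then upgrades this to a strict inequality and is used only to rule out an enlargement of the constrained kernel, i.e. to prove $\ker(\Pi\delta^2\mathcal{E}_0(\phi_0))=\ker(\delta^2\mathcal{E}_0(\phi_0))$. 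You should make this appeal to the minimizer property explicit rather than leaning on the index lemma alone. Relatedly, the extra orthogonality $\innerprod{\Im(\partial_c\phi)}{h}=0$ you build into $\mathcal{M}$ should not be imposed: it is not a tangent direction of the constraint set, and coercivity on that strictly smaller space would not suffice for the nonlinear argument; since you never actually use it downstream, it should simply be deleted.
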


\begin{remark}\label{r:mu_stability} 
As with the nondegeneracy theory in Section \ref{S:nondegeneracy} above, the smooth dependence of $\phi$ and $\omega$ on the wave speed
$c$ near $c=0$ is crucial to our stability analysis.  The requirement that these quantities \emph{also} depend smoothly on the charge $\mu$
is needed to allow perturbations $v\in X$ for which $Q(\phi(\cdot;\mu_0)+v)\neq\mu_0$, i.e. initial perturbations that slightly change the charge from the underlying
wave.  If one is willing to restrict to the class of perturbations for which  $Q(\phi(\cdot;\mu_0)+v)=\mu_0$, thereby ensuring the underlying and perturbed waves have the same
charge, then the smooth dependence of $\phi$ and $\omega$ is not needed; see the proof of Theorem \ref{T:orbital_stability} below.
Note that since we allow for $Q(\phi(\cdot;\mu_0)+v)\neq\mu_0$, the forthcoming stability theory is developed ``by hand" instead of simply applying 
the results of \cite{GSS1,GSS2} directly.
\end{remark}

\begin{remark}\label{r:sign_condition}
Due to Lemma \ref{L:NcSign}, we need only assume that $\partial_c N(\phi(\cdot;c,\mu_0))\neq 0$ in order to satisfy the sign condition \eqref{E:pos}.  Indeed, it is the nonvanishing of $\partial_c N(\phi(\cdot;c,\mu_0))$ that will be used in the forthcoming proof.
\end{remark}

To begin the proof of Theorem \ref{T:orbital_stability}, observe Proposition \ref{P:nondegeneracy} and Lemma
\ref{L:specprop} imply that $\phi_0$ is a degenerate saddle point of $\delta^2 \mathcal{E}_0$ acting
on $H^{\alpha/2}_{\rm a}(0,T)$, with one negative direction and two neutral directions.  To handle these potentially
unstable directions, we note that the evolution of \eqref{E:FNLSrot2} does not occur on the whole space
$X$, but rather on the codimension two nonlinear manifold
\[
\Sigma_0:=\left\{u\in X:Q(u)=\mu_0,~~N(u)=0\right\}.
\]
In particular, $\Sigma_0$ is an invariant set under the flow of \eqref{E:FNLSrot2},
with $\mathcal{O}_{\phi_0}\subset\Sigma_0$.  The key step in the proof of Theorem \ref{T:orbital_stability},
we establish the coercivity of $\mathcal{E}_0$ on $\Sigma_0$ in a neighborhood of $\mathcal{O}_{\phi_0}$
provided that condition \eqref{E:pos} holds.  To this end, we define
\begin{equation}\label{E:tanspace}
\mathcal{T}_0:={\rm span}\left\{\delta Q(\phi_0),\delta N(\phi_0)\right\}^\perp
	={\rm span}\{\phi_0,i\phi_0'\}^\perp
\end{equation}
to be the tangent space in $X$ to the submanifold $\Sigma_0$ at $\phi_0$, and establish
the following technical result.

\begin{lemma}  \label{L:infimum}
Under the hypothesis of Theorem \ref{T:orbital_stability},
\[
\inf\left\{ \innerprod{\delta^2 \CalE_0(\phi_0)v}{v} : \norm{v}_X=1,~ v\in\mathcal{T}_0,~ \ v \perp {\rm span}\{\phi_0',i\phi_0\} \right\} > 0.
\]
In particular, there exists a constant $C>0$ such that
\[
\left<\delta^2 \mathcal{E}_0(\phi)v,v\right>\geq C\|v\|_{X}^2
\]
for all $v\in \mathcal{T}_0$ with $v\perp{\rm span}\{\phi_0',i\phi_0\}$.
\end{lemma}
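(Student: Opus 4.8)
The plan is to exploit the diagonal structure $\delta^2\mathcal{E}_0(\phi_0)=\diag(L_+,L_-)$ on $L^2_{\rm a}(0,T)$ and reduce the claimed coercivity to two scalar estimates. Writing a general $v\in H^{\alpha/2}_{\rm a}(0,T)$ as $v=a+ib$ with $a,b$ real-valued, one has $\langle\delta^2\mathcal{E}_0(\phi_0)v,v\rangle=\langle L_+a,a\rangle+\langle L_-b,b\rangle$, and a direct computation with $\delta Q(\phi_0)=\phi_0$, $\delta N(\phi_0)=i\phi_0'$ shows that the constraints $v\in\mathcal{T}_0$ and $v\perp{\rm span}\{\phi_0',i\phi_0\}$ together are equivalent to $a\perp{\rm span}\{\phi_0,\phi_0'\}$ and $b\perp{\rm span}\{\phi_0,\phi_0'\}$ in $L^2_{\rm a}(0,T)$. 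Thus it suffices to produce constants $c_\pm>0$ with $\langle L_+a,a\rangle\geq c_+\|a\|_{H^{\alpha/2}}^2$ for all such $a$ and $\langle L_-b,b\rangle\geq c_-\|b\|_{H^{\alpha/2}}^2$ for all such $b$; summing and using the equivalence of $\|\cdot\|_X$ and $\|\cdot\|_{H^{\alpha/2}}$ on the relevant subspace then gives the lemma.

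The estimate for $L_+$ is the easy one. By Proposition \ref{P:nondegeneracy}, $L_+\geq 0$ with $\ker L_+={\rm span}\{\phi_0'\}$, and since $L_+$ has compact resolvent its spectrum is discrete, so there is a spectral gap $\delta_+:=\min\bigl(\sigma(L_+)\setminus\{0\}\bigr)>0$ and hence $\langle L_+a,a\rangle\geq\delta_+\|a\|_{L^2}^2$ for all $a\perp\phi_0'$. Combining this with the form bound $\langle L_+a,a\rangle=\|\Lambda^{\alpha/2}a\|_{L^2}^2+\langle((2\sigma+1)\phi_0^{2\sigma}+\omega_0)a,a\rangle\geq\|\Lambda^{\alpha/2}a\|_{L^2}^2-M\|a\|_{L^2}^2$ (for suitable $M>0$) in the standard way — writing $\langle L_+a,a\rangle=\theta\langle L_+a,a\rangle+(1-\theta)\langle L_+a,a\rangle$ with $\theta$ small — upgrades the $L^2$-coercivity to $\langle L_+a,a\rangle\geq c_+\|a\|_{H^{\alpha/2}}^2$.

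The estimate for $L_-$ is the heart of the matter, and this is where \eqref{E:pos} enters: although $n_-(L_-)=1$, the single constraint $b\perp\phi_0'$ already removes the negative direction. Since $\phi_0$ is a constrained local minimizer of $\mathcal H$ subject to fixed $Q$ and $N$, the second-order necessary condition yields $\langle\delta^2\mathcal{E}_0(\phi_0)h,h\rangle\geq 0$ for all $h\in\mathcal{T}_0$, and testing with $h=ib$ (membership of $ib$ in $\mathcal{T}_0$ being exactly $b\perp\phi_0'$) gives $\langle L_-b,b\rangle\geq 0$ for every real $b\perp\phi_0'$. I then claim $L_-$ restricted to the smaller space $\{\phi_0,\phi_0'\}^\perp$ has trivial form-kernel: if $b\perp\{\phi_0,\phi_0'\}$ with $\langle L_-b,b\rangle=0$, then $b$ minimizes the nonnegative form $w\mapsto\langle L_-w,w\rangle$ over $\{\phi_0'\}^\perp$, so $L_-b\in{\rm span}\{\phi_0'\}$, say $L_-b=\kappa\phi_0'$; pairing with any $L_-$-preimage of $\phi_0'$ (one exists since $\phi_0'\in\range(L_-)$ by Lemma \ref{L:specprop}(iv), and its pairing against $\phi_0'$ is well defined and equals $-\partial_cN(\phi)|_{c=0}\neq 0$ by \eqref{E:pos} together with $L_-\Im\partial_c\phi=-\phi_0'$ from Lemma \ref{L:specprop}) forces $\kappa=0$, whence $b\in\ker L_-={\rm span}\{\phi_0\}$ and so $b=0$. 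Finally, to pass from ``nonnegative with trivial form-kernel'' to a uniform gap, I run the usual compactness argument: were the infimum of $\langle L_-b,b\rangle$ over $b\perp\{\phi_0,\phi_0'\}$, $\|b\|_{L^2}=1$ equal to $0$, a minimizing sequence would be bounded in $H^{\alpha/2}$ (since $\|\Lambda^{\alpha/2}b\|_{L^2}^2\leq\langle L_-b,b\rangle+M'\|b\|_{L^2}^2$), hence converge weakly in $H^{\alpha/2}$ and strongly in $L^2$ to some $b_*\perp\{\phi_0,\phi_0'\}$ with $\|b_*\|_{L^2}=1$; weak lower semicontinuity of the Dirichlet part plus $L^2$-convergence of the potential part gives $\langle L_-b_*,b_*\rangle\leq 0$, contradicting the triviality of the form-kernel. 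This yields $\langle L_-b,b\rangle\geq\delta_-\|b\|_{L^2}^2$ on $\{\phi_0,\phi_0'\}^\perp$, and the same interpolation as for $L_+$ promotes it to $\langle L_-b,b\rangle\geq c_-\|b\|_{H^{\alpha/2}}^2$.

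The main obstacle is precisely this $L_-$ step: one must recognize that the negative direction of $L_-$ is cancelled by the momentum constraint $b\perp\phi_0'$ (rather than the charge constraint), quantify this through the local minimality of $\phi_0$, and then use \eqref{E:pos} to exclude a residual zero direction other than the symmetry mode $\phi_0$; the subsequent upgrade from positivity to a uniform gap and from $L^2$ to $H^{\alpha/2}$ is routine.
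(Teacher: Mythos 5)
Your proof is correct, and it reaches the conclusion by a genuinely different (though closely related) route. The paper works with the full complex Hessian: it introduces the orthogonal projection $\Pi$ onto $\mathcal{T}_0$, observes that $\Pi\delta^2\mathcal{E}_0(\phi_0)\geq 0$ by constrained local minimality and has purely discrete spectrum, reads the gap off the eigenvalue listing, and then identifies $\ker\left(\Pi\delta^2\mathcal{E}_0(\phi_0)\right)$ with ${\rm span}\{\phi_0',i\phi_0\}$ by explicitly solving $\delta^2\mathcal{E}_0(\phi_0)\psi=A\phi_0+iB\phi_0'$ via the generalized-kernel elements $\partial_\mu\phi_0$ and $\Im\,\partial_c\phi_0$ and using $\partial_\mu Q=1$ together with \eqref{E:pos} to force $A=B=0$. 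You instead exploit the diagonal structure ${\rm diag}(L_+,L_-)$: your observation that the four real constraints decouple into $a\perp\{\phi_0,\phi_0'\}$ and $b\perp\{\phi_0,\phi_0'\}$ is correct (and not made explicit in the paper), the $L_+$ block follows from the bare spectral gap above $\ker L_+={\rm span}\{\phi_0'\}$ with no use of \eqref{E:pos} at all, and the $L_-$ block is handled by a Lagrange-multiplier/Fredholm pairing against the preimage $\Im\,\partial_c\phi_0$ of $-\phi_0'$, where $\int_0^T\phi_0'\,\Im\,\partial_c\phi_0\,dx=\partial_cN\neq 0$ kills the multiplier — the same pairing the paper uses, seen from the dual side — followed by a minimizing-sequence compactness argument for the uniform gap in place of the paper's appeal to discreteness of the projected operator's spectrum. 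What your version buys is a sharper accounting of which constraint neutralizes which dangerous direction (the momentum constraint removes the negative direction of $L_-$; the charge constraint is only needed to exclude the residual even zero mode via $\partial_\mu Q=1$, implicit in your $b\perp\phi_0$ step), and it makes transparent that \eqref{E:pos} enters only through the $L_-$ block; the paper's version is more compact and does not depend on the Hessian diagonalizing. One routine point you elide: the first-order condition $L_-b=\kappa\phi_0'$ is initially only a distributional identity for $b$ in the form domain $H^{\alpha/2}_{\rm a}(0,T)$, and a one-line elliptic regularity bootstrap is needed to place $b$ in the operator domain before invoking $\ker L_-={\rm span}\{\phi_0\}$; this is standard and does not affect the argument.
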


\begin{proof}
First, let $\Pi:L^2_{\rm a}(0,T)\to\mathcal{T}_0$ be the self-adjoint projection onto $\mathcal{T}_0$
and note consider the constrained operator
\[
\Pi\delta^2 \mathcal{E}_0(\phi_0):\mathcal{T}_0\subset X\to\mathcal{T}_0.
\]
By construction, $\mathcal{T}_0$ is an invariant subspace of $\Pi\delta^2 \mathcal{E}_0(\phi_0)$ 
and, further, the operator $\Pi\delta^2 \mathcal{E}_0(\phi_0)$ is self adjoint acting on $\mathcal{T}_0$.
Owing to the periodic boundary conditions, the spectrum of the operator 
$\Pi\delta^2 \mathcal{E}_0(\phi_0)$ is real and purely discrete, consisting of infinitely many
isolated eigenvalues with no finite accumulation point.  Further, since $\phi_0$ locally
minimizes $\mathcal{E}_0$ with fixed $Q(u)=\mu_0$ and $N(u)=0$, we have that
\[
\Pi\delta^2 \mathcal{E}_0(\phi_0)\geq 0.
\]
It follows that $\Pi\delta^2 \mathcal{E}(\phi_0)$ is positive-semidefinite acting on $\mathcal{T}_0$,
and hence the eigenvalues of $\Pi\delta^2 \mathcal{E}_0(\phi_0)$ may be listed, not counting multiplicity, as
\[
\Lambda_1=0<\Lambda_2<\Lambda_3<\Lambda_4<\ldots\to+\infty.
\]
Thanks to the spectral gap between $\Lambda_1$ and $\Lambda_2$, if we now define
the self-adjoint spectral projection
\[
\Pi_0:\mathcal{T}_0\mapsto{\rm ker}\left(\Pi\delta^2 \mathcal{E}_0(\phi_0)\right)\cap\mathcal{T}_0
\]
it follows that
\[
\sigma\left((1-\Pi_0)\Pi\delta^2 \mathcal{E}_0(\phi_0)\right)
 		=\sigma\left(\Pi\delta^2 \mathcal{E}_0(\phi_0)\right)\setminus\{0\}
\]
so that, in particular,
\[
\inf\left\{ \innerprod{\delta^2 \CalE(\phi)v}{v} : \norm{v}_X=1,~ v\in\mathcal{T}_0, \ 
	v \perp {\rm ker}\left(\Pi\delta^2 \mathcal{E}_0(\phi_0)\right) \right\} =\Lambda_2>0.
\]
This immediately provides the estimate
\[
\left<\Pi\delta^2 \mathcal{E}_0(\phi_0)v,v\right>\geq\Lambda_2\|v\|_{L^2(0,T)}^2
\]
for all $v\in\mathcal{T}_0$ with $v\perp\ker\left(\Pi\delta^2 \mathcal{E}_0(\phi_0)\right)$.  By the definition
of the space $X$, it is now easy to see this in turn implies the estimate
\[
\left<\Pi\delta^2 \mathcal{E}_0(\phi_0)v,v\right>\geq\Lambda_2\|v\|_{X}^2
\]
for all such $v$; see \cite[Lemma 5.2.3]{KP13}.

By Proposition \ref{P:nondegeneracy}, the desired result follows immediately provided
\begin{equation}\label{E:equivker}
\ker\left(\Pi\delta^2 \mathcal{E}_0(\phi_0)\right)=\ker\left(\delta^2 \mathcal{E}_0(\phi_0)\right)
={\rm span}\{\phi_0',i\phi_0\}.
\end{equation}
Well, if $\psi\in\mathcal{T}_0$ lies in the kernel of $\Pi\delta^2 \mathcal{E}_0(\phi_0)$ then, 
by the definition of the projection $\Pi$, we have
\[
\delta^2 \mathcal{E}_0(\phi_0)\psi=A\phi_0+iB\phi_0'
\]
for some constants $A,B\in\RM$.  From Proposition \ref{P:nondegeneracy} and \eqref{E:genker},
it follows that all solutions of the above equation are of the form
\[
\psi=-A\left(\frac{\partial\omega}{\partial\mu}\right)^{-1}\frac{\partial\phi_0}{\partial\mu}
-iB\Im\left(\frac{\partial\phi_0}{\partial c}\right)+\gamma_1\phi_0'+i\gamma_2\phi_0
\]
for some constants $\gamma_1,\gamma_2\in\RM$.  The requirement that $\psi\in\mathcal{T}_0$
now enforces
\[
\left<\phi_0,\psi\right>=-A\left(\frac{\partial\omega}{\partial\mu}\right)^{-1}
	\int_0^T\frac{\partial\phi_0}{\partial\mu}\phi_0~dx=0
\]
and
\[
\left<i\phi_0',\psi\right>=-B\int_0^T\phi_0'\Im\left(\frac{\partial\phi_0}{\partial c}\right)dx =0.
\]
Since $\partial_\mu Q(\phi_0(\cdot;c,\mu))=1$ for all $\mu>0$, and 
$\partial_cN(\phi_0(\cdot;c,\mu))\neq 0$ at $(c,\mu)=(0,\mu_0)$ 
by assumption, it follows that while the operator $\Pi\delta^2 \mathcal{E}_0(\phi_0)$ formally sends the functions
$\frac{\partial\phi_0}{\partial\mu}$ and $\frac{\partial\phi_0}{\partial c}$ to zero, these functions
do not lie in the admissible space space $\mathcal{T}_0$ under the given hypotheses.
In particular, it follows that $A=B=0$ above, and hence that \eqref{E:equivker} holds, completing the proof.
\end{proof}

\

\begin{remark}\label{R:rem1}
Alternatively to the direct proof above, one may use an index formula to verify the constrained kernel
condition \eqref{E:equivker}.  Indeed, observing that since $\phi_0,i\phi_0'\in\ker\left(\delta^2 \mathcal{E}_0(\phi_0)\right)^\perp$ and that 
$\left(\delta^2 \mathcal{E}_0(\phi_0)\right)^{-1}\phi_0
	=-\left(\frac{\partial\omega}{\partial\mu}\right)^{-1}\frac{\partial\phi_0}{\partial\mu}$,
and $\left(\delta^2 \mathcal{E}_0(\phi_0)\right)^{-1}(i\phi_0')=-\frac{\partial\phi_0}{\partial c}$
it follows that
\begin{equation}\label{E:index}
\dim\left(\ker\left(\Pi\delta^2 \mathcal{E}_0(\phi_0)\right)\right)=
\dim\left(\ker\left(\delta^2 \mathcal{E}_0(\phi_0)\right)\right)+
z\left(\frac{\partial(N,Q)}{\partial(c,\mu)}\Big{|}_{(c,\mu)=(0,\mu_0)}\right)
\end{equation}
where $z(D)$ denotes the number of zero eigenvalues of a given matrix $D$;
see, for instance, \cite[Theorem 2.1]{KP12} or \cite[Theorem 5.3.2]{KP13}.  The above Jacobian has already been computed
in \eqref{E:jacobian2} and shown to be non-singular under the condition that $\frac{\partial N}{\partial c}\neq 0$
at $\phi_0$, yielding the equivalence of the kernels in \eqref{E:equivker}
\end{remark}

\

\begin{remark}\label{R:sign_condition2}
The effect of the sign condition \eqref{E:pos} can be understood through the identity
\[
n_-\left(\ker\left(\Pi\delta^2 \mathcal{E}_0(\phi_0)\right)\right)=
n_-\left(\ker\left(\delta^2 \mathcal{E}_0(\phi_0)\right)\right)-
n_-\left(\frac{\partial(N,Q)}{\partial(c,\mu)}\Big{|}_{(c,\mu)=(0,\mu_0)}\right)-z\left(\frac{\partial(N,Q)}{\partial(c,\mu)}\Big{|}_{(c,\mu)=(0,\mu_0)}\right),
\]
which again follows by the index theorem as in \cite[Theorem 5.3.2]{KP13}.  Since $\phi$ minimizes $\mathcal{E}_0$ with fixed $Q(u)=\mu_0$ and $N(u)=0$, we know
that $n_-\left(\ker\left(\Pi\delta^2 \mathcal{E}_0(\phi_0)\right)\right)=0$.  Similarly, we have by Proposition \ref{P:nondegeneracy} that
$n_-\left(\ker\left(\delta^2 \mathcal{E}_0(\phi_0)\right)\right)=1$, hence the above result implies that
$\frac{\partial(N,Q)}{\partial(c,\mu)}\Big{|}_{(c,\mu)=(0,\mu_0)}$ is either singular or has one negative eigenvalue.  This reproduces the sign condition \eqref{E:pos} by
the identity \eqref{E:jacobian2}.
\end{remark}

\

\begin{remark}\label{R:omega_not_needed1}
The smooth dependence of $\omega$ and $\phi$ on $\mu$ may be removed as an assumption from Lemma \ref{L:infimum}.  Indeed,
defining $\xi$ to satisfy $L_+\xi=\phi_0$ as in the proof of Proposition \ref{P:Jordan}, the function $\psi$ in the above proof
can be taken to be
\[
\psi=A\xi-iB{\rm Im}\left(\frac{\partial\phi_0}{\partial c}\right)+\gamma_1\phi_0'+i\gamma_2\phi_0
\]
for $\gamma_1,\gamma_2\in\RM$.   Since we have
\[
\left<\phi_0,\psi\right>=\left<L_+\xi,\xi\right>\neq 0
\]
from the proof of Proposition \ref{P:Jordan}, the requirement that $\psi\in\mathcal{T}_0$ then becomes $\partial_c\big{|}_{c=0}N(\phi_0(\cdot;c,\mu))\neq 0$, as claimed.
Note, however, that the smooth dependence of $\omega$ as a function of $\mu$ is necessary to prove Theorem \ref{T:focusing_orbital_stability} as stated, but again may be removed
as an assumption if one further restricts the class of perturbations; see Remark \ref{R:omega_not_needed2} below.
\end{remark}
\

Next, we introduce the semidistance $\rho$ on $X$ defined via
\[
\rho(u,v):=\inf_{(\beta,x_0)\in\mathbb{R}^2}\left\|u-e^{i\beta}v(\cdot-x_0)\right\|_{X},
\]
and observe that $\rho(u,v)$ simply measures the distance in $X$ from $u$ 
to the group orbit $\mathcal{O}_v$ or, equivalently, from $v$ to $\mathcal{O}_u$.
Next, we show that the functional $\mathcal{E}_0$ is coercive on the nonlinear manifold
$\Sigma_0$ with respect to the semidistance $\rho$.

\begin{proposition}[Coercivity]  \label{P:coercivity}
Under the hypothesis of Theorem \ref{T:orbital_stability}, there exist constants $\eps>0$
and $C=C(\eps)>0$ such that if $u\in\Sigma_0$ with $\rho(u,\phi_0)<\eps$, then
\[
\mathcal{E}_0(u)-\mathcal{E}_0(\phi_0)\geq C\rho(u,\phi_0)^2.
\]
\end{proposition}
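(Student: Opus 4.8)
The plan is to establish the coercivity estimate by combining the second-order Taylor expansion of $\mathcal{E}_0$ around $\phi_0$ with the quadratic-form positivity provided by Lemma~\ref{L:infimum}, while carefully handling the two-parameter group orbit $\mathcal{O}_{\phi_0}$ via a suitable choice of modulation parameters. First I would fix $u \in \Sigma_0$ with $\rho(u,\phi_0) < \eps$ and, using the definition of $\rho$ together with a standard implicit-function-theorem argument (as in, e.g., \cite[\S III]{GSS1} or \cite{GH07}), select $(\beta,x_0) \in \mathbb{R}^2$ \emph{near} $(0,0)$ realizing the infimum (up to an irrelevant factor), so that the "error" $w := e^{-i\beta}u(\cdot + x_0) - \phi_0$ satisfies $\|w\|_X \lesssim \rho(u,\phi_0)$ and, crucially, the orthogonality conditions $w \perp \operatorname{span}\{\phi_0', i\phi_0\}$ coming from the first-order optimality in the modulation parameters. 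Since $\mathcal{E}_0$ is invariant under the translation and phase-rotation symmetries and $u \in \Sigma_0$ implies $e^{-i\beta}u(\cdot+x_0) \in \Sigma_0$ as well, we have $\mathcal{E}_0(u) = \mathcal{E}_0(\phi_0 + w)$ and $Q(\phi_0 + w) = \mu_0$, $N(\phi_0 + w) = 0$.

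Next I would Taylor-expand: since $\delta\mathcal{E}_0(\phi_0) = 0$ and $\mathcal{E}_0$ is smooth on $X$ (by the smoothness of $\mathcal{H}$ and $Q$ on $X$), we get
\[
\mathcal{E}_0(\phi_0 + w) - \mathcal{E}_0(\phi_0) = \tfrac{1}{2}\innerprod{\delta^2\mathcal{E}_0(\phi_0)w}{w} + o(\|w\|_X^2)
\]
as $\|w\|_X \to 0$. The constraints $Q(\phi_0 + w) = \mu_0$ and $N(\phi_0+w) = 0$ expand, using $\delta Q(\phi_0) = \phi_0$ and $\delta N(\phi_0) = i\phi_0'$, as $\innerprod{\phi_0}{w} = O(\|w\|_X^2)$ and $\innerprod{i\phi_0'}{w} = O(\|w\|_X^2)$. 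Thus $w$ is not exactly in $\mathcal{T}_0$, but only approximately; writing $w = \tilde{w} + r$ where $\tilde{w} \in \mathcal{T}_0$ is the $L^2_{\rm a}$-orthogonal projection of $w$ and $\|r\|_X = O(\|w\|_X^2)$, and noting that the additional orthogonality $w \perp \operatorname{span}\{\phi_0', i\phi_0\}$ is preserved up to the same order (since $\mathcal{T}_0$ is only a codimension-two correction), Lemma~\ref{L:infimum} applied to $\tilde{w}$ gives $\innerprod{\delta^2\mathcal{E}_0(\phi_0)\tilde{w}}{\tilde{w}} \geq C\|\tilde{w}\|_X^2$. Absorbing the cross terms involving $r$ (which are $O(\|w\|_X^3)$) and the $o(\|w\|_X^2)$ remainder into the leading term by shrinking $\eps$, I obtain
\[
\mathcal{E}_0(\phi_0+w) - \mathcal{E}_0(\phi_0) \geq \tfrac{C}{4}\|w\|_X^2
\]
for $\eps$ small enough; finally $\|w\|_X \geq c\,\rho(u,\phi_0)$ by the near-optimal choice of modulation parameters (indeed, up to adjusting constants, $\|w\|_X$ and $\rho(u,\phi_0)$ are comparable), which yields the claimed bound $\mathcal{E}_0(u) - \mathcal{E}_0(\phi_0) \geq C\rho(u,\phi_0)^2$.

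The main obstacle, and the step requiring the most care, is the bookkeeping around the fact that $w$ only approximately satisfies the linear constraints defining $\mathcal{T}_0$ and only approximately satisfies the orthogonality to $\operatorname{span}\{\phi_0', i\phi_0\}$: one must verify that the quadratic-form lower bound of Lemma~\ref{L:infimum}, which is stated for vectors lying \emph{exactly} in $\mathcal{T}_0 \cap \operatorname{span}\{\phi_0', i\phi_0\}^\perp$, survives these $O(\|w\|_X^2)$ perturbations. This is handled by the standard device of splitting off a correction term $r$ of size $O(\|w\|_X^2)$, using that $\delta^2\mathcal{E}_0(\phi_0)$ is bounded from $X$ to $X^*$ so that $|\innerprod{\delta^2\mathcal{E}_0(\phi_0)\tilde{w}}{r}| + |\innerprod{\delta^2\mathcal{E}_0(\phi_0)r}{r}| = O(\|w\|_X^3)$, and then choosing $\eps$ small enough that all such cubic-and-higher errors are dominated by $\frac{C}{2}\|w\|_X^2$. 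A secondary technical point is justifying the near-optimal modulation choice and the comparability $\|w\|_X \asymp \rho(u,\phi_0)$; this is routine given that the map $(\beta,x_0) \mapsto e^{i\beta}\phi_0(\cdot - x_0)$ is a smooth embedding near $(0,0)$ with injective differential spanning $\operatorname{span}\{i\phi_0, \phi_0'\}$, so a standard Lyapunov–Schmidt / tubular-neighborhood argument applies.
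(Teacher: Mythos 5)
Your proposal is correct and follows essentially the same route as the paper's proof: modulation via the implicit function theorem to obtain orthogonality to $\{\phi_0',i\phi_0\}$, Taylor expansion of $\mathcal{E}_0$ together with expansion of the constraints $Q$ and $N$ to show the components of the perturbation along $\phi_0$ and $i\phi_0'$ are $O(\|w\|_X^2)$, and then Lemma \ref{L:infimum} applied to the remaining piece. The paper's decomposition $h=C_1\phi_0+iC_3\phi_0'+y$ with $C_1,C_3=\mathcal{O}(\|h\|_X^2)$ is exactly your splitting $w=\tilde w+r$ with $\|r\|_X=O(\|w\|_X^2)$, so the two arguments coincide up to bookkeeping.
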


\begin{proof}
By the implicit function theorem, for $\varepsilon > 0$ sufficiently small there exists a 
neighborhood $\CalU_\varepsilon := \{ u \in X : \rho(u,\phi_0) < \varepsilon \}$ of 
$\CalO_{\phi_0}$ and continuously differentiable  maps $\tau,\beta : \CalU_\varepsilon \to \R$ such that
\begin{equation}\label{E:implicit}
\tau(\phi_0)=0, \quad \beta(\phi_0)=0, \quad \innerprod{e^{i\beta(u)} u(\cdot + \tau(u))}{\phi_0'} = 0, \quad \text{and} \quad \innerprod{e^{i\beta(u)} u(\cdot + \tau(u))}{i\phi_0} = 0
\end{equation}
for all $u \in \CalU_\varepsilon$.  Since $\CalE_0$ is invariant under spatial translations, it will suffice to show that $\CalE_0(u(\cdot + \tau(u)) - \CalE_0(\phi_0) \geq C\rho(u(\cdot + \tau(u)),\phi_0)^2$.  Now, fix $u \in \CalU_\varepsilon \cap \mathcal{T}_0$ and note we can write
\begin{align*}
e^{i\beta(u)}u(\cdot + \tau(u)) &= \phi_0 + C_1\phi_0 + C_2 \phi_0' + y,
\end{align*}
where the $C_1,C_2\in\CM$ and $y\in\mathcal{T}_0$ with $y\perp\ker\left(\delta^2 \mathcal{E}(\phi_0)\right)$.
In particular, note that $C_1=C_2=y=0$ at $u=\phi_0$.  Furthermore, under the above decomposition the 
the orthogonality conditions in \eqref{E:implicit} reduce to
\[
\Re(C_2)\int_0^T(\phi_0')^2 \, dx = \Im(C_1)\int_0^T\phi_0^2 \, dx=0,
\]
and hence $C_1\in\RM$ and $C_2=iC_3$ for some $C_3\in\RM$.

Next, set
\[
h:= e^{i\beta(u)}u(\cdot+\tau(u))-\phi=C_1\phi_0+iC_3\phi_0'+y
\]
and note, after possibly translating and rotating $\phi_0$, we may assume without loss of generality 
that $\|h\|_X<\eps$.  Since the functionals $Q$ and $N$ are 
left invariant by spatial translations and unitary phase rotations, Taylor's theorem yields
\[\left\{\begin{aligned}
Q(u)&=Q(e^{i\beta(u)} u(\cdot + \tau(u))) = Q(\phi_0) + \innerprod{\delta Q(\phi_0)}{h} + \mathcal{O}(\norm{h}_X^2) \\
N(u)&=N(e^{i\beta(u)} u(\cdot + \tau(u))) = N(\phi_0) + \innerprod{\delta N(\phi_0)}{h} + \mathcal{O}(\norm{h}_X^2)
\end{aligned}\right.
\]
so that, since $u\in\Sigma_0$,
\begin{align*}
C_1\|\phi_0\|_X^2=\innerprod{\delta Q(\phi_0)}{h}=\mathcal{O}(\|h\|_X^2)\quad\textrm{and}\quad
C_3\|\phi_0'\|_X^2=\innerprod{\delta N(\phi_0)}{h}=\mathcal{O}(\|h\|_X^2).
\end{align*}
Consequently, $C_1,C_3=\mathcal{O}\left(\|h\|_{X}^2\right)$.

Finally, using Taylor's theorem again, we find
\begin{align*}
\mathcal{E}_0(u)&=\mathcal{E}_0(e^{i\beta(u)} u(\cdot + \tau(u))) = \CalE_0(\phi_0) + \innerprod{\delta \CalE_0(\phi_0)}{h} + \frac{1}{2}\innerprod{\delta^2 \CalE_0(\phi_0)h}{h} + o(\norm{h}_X^2)
\end{align*}
so that
\begin{align*}
\CalE_0(u)-\CalE_0(\phi_0) &= \frac{1}{2}\innerprod{\delta^2 \CalE_0(\phi_0)h}{h} + o(\norm{h}_X^2)\\
&= \frac{1}{2}\left<\delta^2 \mathcal{E}_0(\phi_0)y,y\right> + \mathcal{O}(C_1^2 + C_3^2) + \mathcal{O}((|C_1|+|C_3|)\|h\|_X) + o(\|h\|_X^2) \\
&= \frac{1}{2}\left<\delta^2 \mathcal{E}_0(\phi_0)y, y\right> + o(\|h\|_X^2).
\end{align*}
where the last equality is justified by the above estimates on $C_1,C_3$.
Since $y\in\mathcal{T}_0$ with $y\perp{\rm span}\{\phi_0',i\phi_0\}$, it follows by Lemma \ref{L:infimum} that
\[
\left<\delta^2 \mathcal{E}_0(\phi_0)y,y\right>\geq C\|y\|_{X}^2
\]
for some constant $C>0$.  Since the estimates $C_1,C_3=\mathcal{O}(\|h\|_{X}^2)$ yield
\begin{align*}
\norm{y}_X &= \norm{h - C_1\phi - iC_3\phi}_X 
\geq \norm{h}_X - C^* \norm{h}_X^2
\end{align*}
for some constant $C^*>0$, it follows from the definition of $h$ that for $\eps>0$ sufficiently small we have 
\[
\mathcal{E}_0(u)-\mathcal{E}_0(\phi)\geq C\left\|e^{i\beta(u)}u(\cdot+\tau(u))-\phi\right\|_{X}^2
\geq C\rho(u,\phi)^2,
\]
for some constant $C>0$, which completes the proof.
\end{proof}


Equipped with the coercivity estimate in Proposition \ref{P:coercivity} above, 
we now establish orbital stability of $\phi_0$ with respect to complex-valued, $T$-antiperiodic perturbations.

\begin{proof}[Proof of Theorem \ref{T:orbital_stability}]
Let $\eps_0>0$ be such that Proposition \ref{P:coercivity} holds, and let $v \in X$ satisfy $\rho(\phi_0,\phi_0+v) \leq \eps$ 
for some $0<\eps<\eps_0$.
Since $\phi_0$ is a critical point of $\CalE_0$, Taylor's theorem implies that
$\mathcal{E}_0(\phi_0+v)-\mathcal{E}_0(\phi_0)\leq C\eps^2$ for some constant $C>0$.
Furthermore, notice that if $\phi_0+v \in \Sigma_0$, then the unique solution $u(\cdot,t)$ of \eqref{E:FNLSrot2}
with $u(\cdot,0)=\phi_0+v$ remains in $\Sigma_0$ so long as it exists.  
Since $\CalE_0(u(\cdot,t)) = \CalE_0(u(\cdot,0)) = \CalE_0(\phi+v)$ independently of $t$, 
we have by the coercivity estimate in Proposition \ref{P:coercivity} 
that
\[
C^{-1}\rho(u(\cdot,t),\phi_0)^2\leq\mathcal{E}_0(\phi_0+v)-\mathcal{E}_0(\phi_0)\leq C\eps^2
\]
for some constant $C>0$ and all $t\geq 0$, establishing the orbital stability of $\phi_0$ to such perturbations.

In the case that $\phi_0+v \notin \Sigma_0$ but $\|v\|_X\leq\eps$ and $N(\phi_0+v)=0$, 
we utilize the nondegeneracy of the constraint set
to establish stability.  Specifically, recall that from the discussion following the proof
of Proposition \ref{P:nondegeneracy} that the  condition \eqref{E:pos} implies that the mapping 
\[
(c,\mu) \mapsto \left(N(\phi(\cdot;c,\mu)),Q(\phi(\cdot;c,\mu))\right)
\]
is a period-preserving diffeomorphism from a neighborhood of $(c,\mu)=(c_0,\mu_0)$ onto a neighborhood of 
$(N,Q)=(0,\mu_0)$.  
We may thus find a number $\tilde\mu\in\mathbb{R}$ with $\tilde\mu=\mathcal{O}(\eps)$ such that 
$\phi_\eps(\cdot;\mu_0+\tilde\mu)$ is a real-valued $T$-antiperiodic standing wave of fNLS \eqref{E:FNLS1} satisfying 
$Q(\phi_\eps(\cdot;\mu_0+\tilde\mu)) = Q(\phi_0+v)$.  
Defining $\CalE_\varepsilon(u) = \CalE(u) + \omega(\mu+\tilde\mu) Q(u)$, 
we may furthermore assume that $\phi_\varepsilon$ 
minimizes $\CalE_\varepsilon$ subject to the constraint that $Q(u)=Q(\phi_0+v)$ and $N(u)=0$.
From the proof of Proposition \ref{P:coercivity}, we now have
\[
\CalE_\varepsilon(u) - \CalE_\varepsilon(\phi_\varepsilon) \geq C\rho(u,\phi_\varepsilon)^2
\]
so long as $\rho(u,\phi_\varepsilon)$ sufficiently small and, since $\phi_\varepsilon$ is a critical point of 
$\CalE_\varepsilon$, we also have by the triangle inequality that
\[
\CalE_\varepsilon(u(\cdot,t)) - \CalE_\eps(\phi_\varepsilon) = \CalE_\varepsilon(\phi_0+v) 
- \CalE_\varepsilon(\phi_\varepsilon) \leq C\varepsilon^2
\]
for all $t \geq 0$.  Again using the triangle inequality, we finally have
\begin{align*}
\rho(u(\cdot,t),\phi_0)^2 &\leq C\parens{ \rho(u(\cdot,t),\phi_\eps)^2 + \rho(\phi_\eps,\phi_0)^2 } \\
&\leq C\parens{ \CalE_\varepsilon(u) - \CalE_\varepsilon(\phi_\varepsilon) } + C\norm{\phi_\varepsilon - \phi_0}_X^2 
= 2C\varepsilon^2 
\end{align*}
for all $t\geq 0$, implying that $\phi_0$ is orbitally stable to small perturbations
that ``slightly" change $Q$ yet preserve $N$, thus establishing Theorem \ref{T:orbital_stability}.
\end{proof}

\begin{remark}\label{R:omega_not_needed2}
From the above proof, it is clear that the assumption that $\phi$ and $\omega$ depend smoothly on $\mu$ in Theorem \ref{T:orbital_stability} is only necessary in considering
perturbations $v$ above such that $\phi_0+v\notin\Sigma_0$ with $\|v\|_X\leq\eps$ and $N(\phi_0+v)=0$; see also Remark \ref{R:omega_not_needed1} above.  Consequently, if one
is willing to restrict to perturbations $v$ that do \emph{not change} the charge and angular momentum of the underlying wave,
this assumption may be removed.
\end{remark}

\section{Analysis of the Focusing Case}  \label{S:focusing_case}

As described in the introduction, following Gallay \& Haragus \cite{GH07} 
we choose not to give full attention to periodic 
standing waves in the focusing case ($\gamma =+1$).  In large part, this choice is due to the 
observation of Rowlands \cite{Rowlands74} that, at least in the local case $\alpha=2$ with $\sigma=1$, 
all such waves are modulationally unstable; see also the recent works \cite{DS16,GCT}.  
While it is not immediately clear that Rowlands' result extends 
to the nonlocal case $\alpha\in(0,2)$, it seems reasonable to expect. 
Thus, while we may be able to establish the stability of antiperiodic waves in this case to a restricted
class of perturbations, these waves are expected to be unstable to more general periodic perturbations.
Nevertheless, the theory developed in the previous sections is still able to establish the nondegeneracy
of all the waves we construct here, irrespective of whether such waves are constrained energy minimizers,
an observation we believe is worth discussing in some detail.

To motivate the existence of such waves, observe that in the local case $\alpha=2$ with $\gamma=+1$ 
elementary phase plane analysis reveals that the focusing profile equation
\begin{equation} \label{E:focusing_profile_equation}
\Lambda^\alpha \phi +\omega\phi + ic\phi' - |\phi|^{2\sigma}\phi = 0,~~\omega,c\in\mathbb{R}
\end{equation}
for $c=0$ yields several families of real-valued, bounded periodic solutions.  Indeed, integrating \eqref{E:focusing_profile_equation} implies such waves can be
reduced to quadrature via 
\[
\frac{1}{2}\left(\phi'\right)^2=H-V(\phi;\omega),
\]
where
\[
V(\phi;\omega) = -\frac{\omega}{2}\phi^2 + \frac{1}{2\sigma+2}\phi^{2\sigma+2};
\]
see Figure \ref{F:focusing_potential}. When $\omega>0$, there exist sign-definite periodic solutions 
as well as sign changing antiperiodic solutions, with these classes of solutions being separated in 
phase space by a separatrix corresponding to the unique (up to symmetries)
solitary standing wave: mark that no such homoclinic structures exist in the defocusing case.
Furthermore, when $\omega<0$ all periodic solutions are in fact antiperiodic and there exist no other 
real-valued, nonconstant standing structures.  The nonlinear stability of such periodic structures
in the local case $\alpha=2$ has been studied in \cite{Pava07,GH07,GCT}.  As in the previous sections,
our goal here is to extend this local analysis to the genuinely nonlocal case.

\begin{figure}[t]
\centering
\begin{subfigure}[t]{0.5\textwidth}
\includegraphics[scale=0.12]{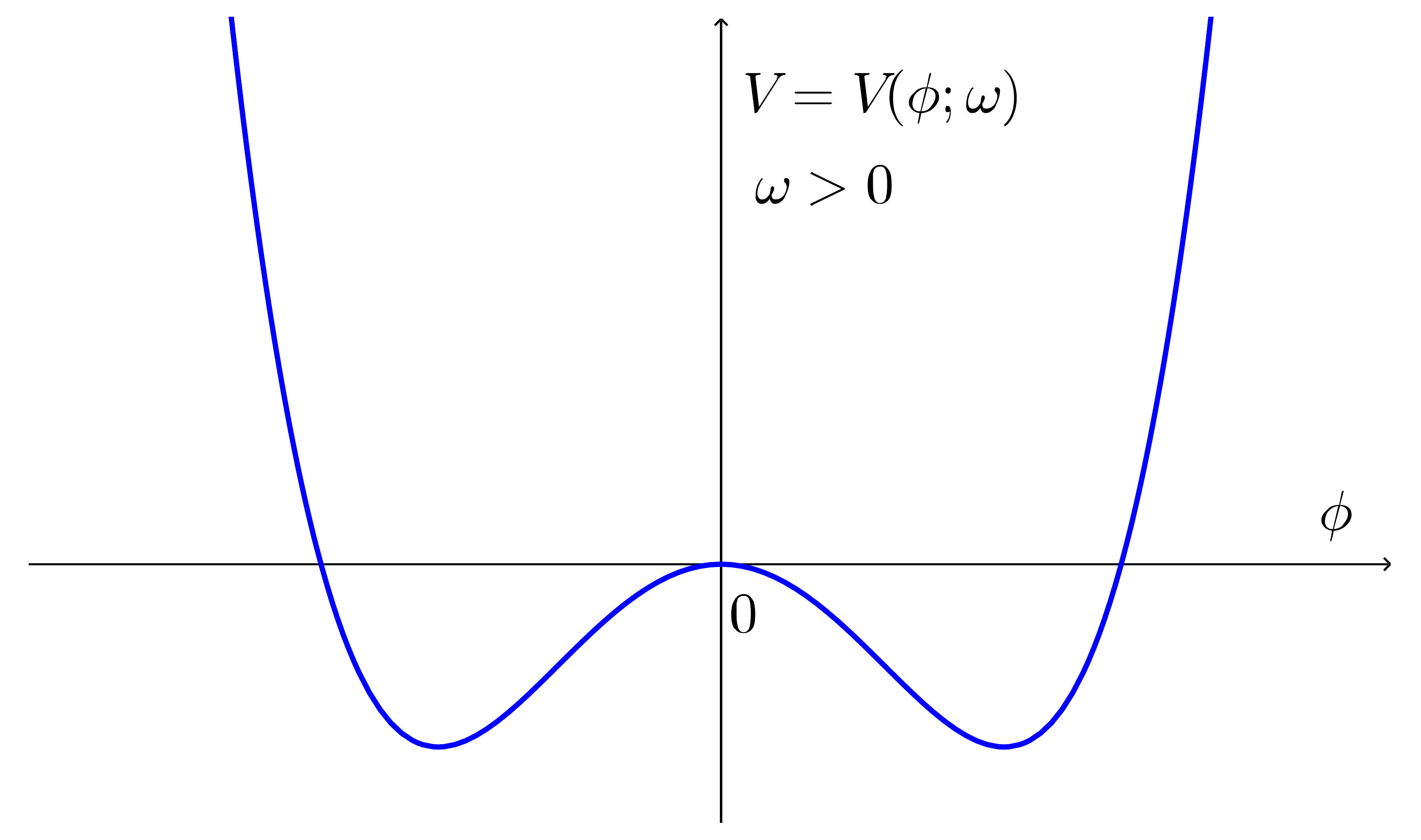}
\caption{Focusing potential for $\omega > 0$.}
\end{subfigure}%
\begin{subfigure}[t]{0.5\textwidth}
\includegraphics[scale=0.12]{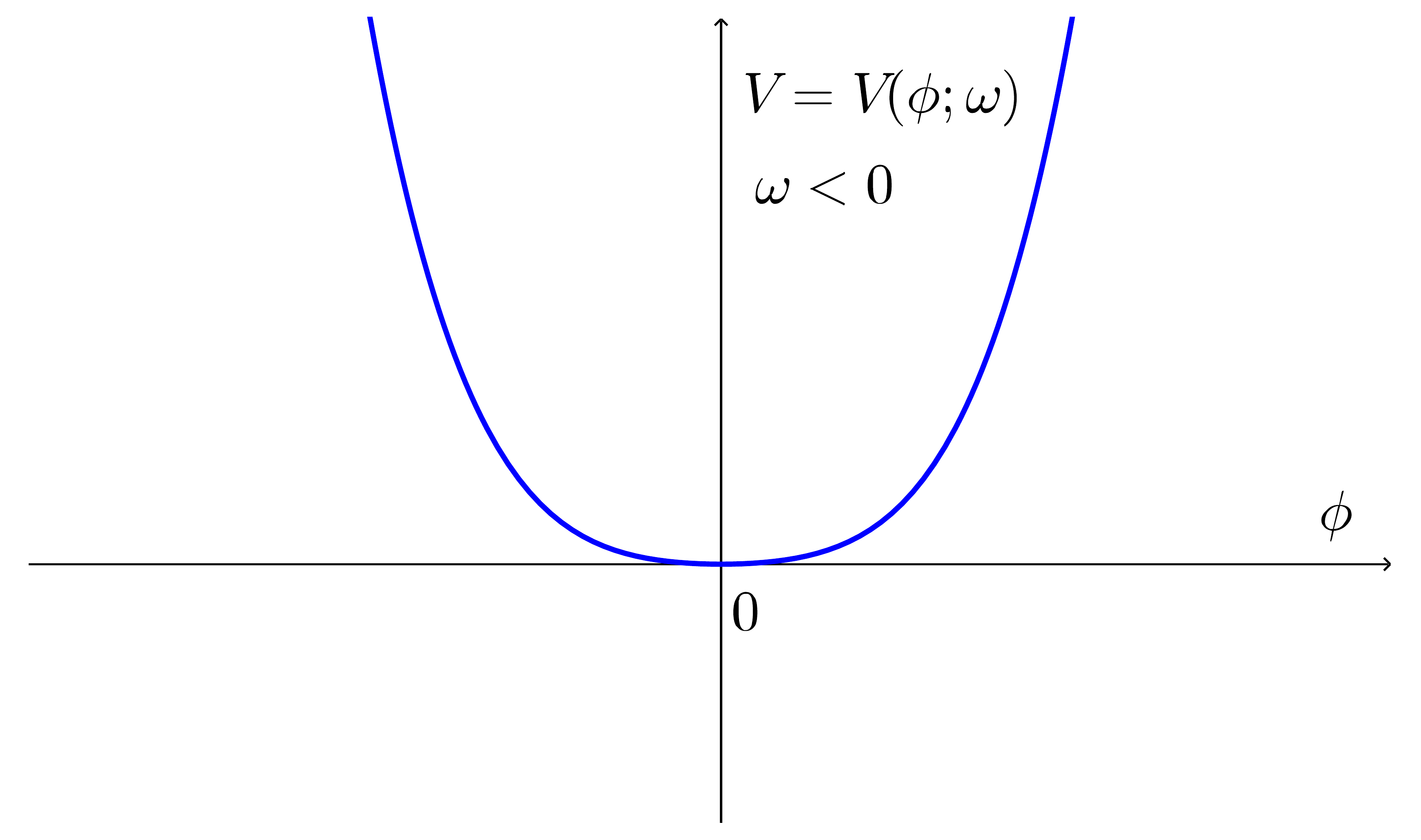} 
\caption{Focusing potential for $\omega < 0$.}
\end{subfigure}
\caption{The effective potential $V(\phi;\omega) = -\frac{\omega}{2}\phi^2 + \frac{1}{2\sigma+2}\phi^{2\sigma+2}$ for the focusing NLS.}
\label{F:focusing_potential}
\end{figure}

When $\alpha\in(1,2)$, it is natural to expect the existence of both antiperiodic and sign-definite waves
for the focusing fNLS.  Mimicking the analysis from the defocusing case, we concentrate here on the existence,
nondegeneracy, and stability of antiperiodic standing wave solutions: see Remark \ref{R:focusing_signdefinite}
for discussion regarding the sign-definite solutions. 
The existence of real-valued, antiperiodic solutions
of \eqref{E:focusing_profile_equation} may be established 
by a similar method as in Section \ref{S:existence}, though with a different
functional setup. Observe that $T$-antiperiodic solutions of \eqref{E:focusing_profile_equation} are critical points
of the Lagrangian functional
\[
H^{\alpha/2}_{\rm a}(0,T)\ni u\mapsto K(u)-P(u)+\omega Q(u),
\]
where $K$, $P$, and $Q$ are defined as in Section \ref{S:existence}: note the sign difference 
on the potential energy term $P$ between here and the defocusing case.  As the Hamiltonian
$K-P$ is not sign-definite in this case, rather than constructing 
critical points by minimizing $K-P$ subject to fixed $Q$, producing constrained energy minimizers,
we seek to minimize $K+\omega Q$ for a given $\omega$ subject to fixed potential
energy.

\begin{proposition}\label{P:focusing_existence}
Let $\alpha \in (1,2)$, $\omega\in\RM$ and $T,\sigma>0$ be fixed in the focusing 
($\gamma=+1$) fNLS \eqref{E:FNLS1}.  For each $|\omega|<\left(\frac{\pi}{T}\right)^{\alpha}$ and
$P_0>0$ there exists a real-valued, even $\phi\in H^{\alpha/2}_{\rm a}(0,T)$ with $P(\phi)=P_0$
such that $\phi$ is strictly 
decreasing on $(0,T)$ and solves \eqref{E:focusing_profile_equation} with $c=0$ in the sense
of distributions.  The function $\phi(\cdot;\omega,P_0)$
belongs to $H^\infty_{\rm a}(0,T)$ and
minimizes the Lagrangian functional
\[
\mathcal{E}(u;\omega):=K(u)-P(u)+\omega Q(u)
\]
subject to the constraint $P(u)\equiv P_0$.
\end{proposition}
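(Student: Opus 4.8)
The plan is to follow the variational scheme of Proposition~\ref{P:min} and Lemma~\ref{L:c=0_prop}, replacing the charge constraint by the potential-energy constraint. Observe first that on $\mathcal{B}_{P_0} := \{u \in H^{\alpha/2}_{\rm a}(0,T) : P(u) = P_0\}$ one has $\mathcal{E}(u;\omega) = K(u) + \omega Q(u) - P_0$, so minimizing $\mathcal{E}(\cdot;\omega)$ over $\mathcal{B}_{P_0}$ is the same as minimizing $J_\omega(u) := K(u) + \omega Q(u)$ over $\mathcal{B}_{P_0}$. The restriction $|\omega| < (\pi/T)^\alpha$ enters through coercivity: every element of $L^2_{\rm a}(0,T)$ has Fourier support in the odd integers, so the lowest symbol of $\Lambda^\alpha$ on this space is $(\pi/T)^\alpha$, giving $K(u) \ge (\pi/T)^\alpha Q(u)$ and hence $J_\omega(u) \ge \delta \|u\|_{H^{\alpha/2}}^2$ for some $\delta = \delta(\omega,T,\alpha) > 0$ whenever $\omega > -(\pi/T)^\alpha$; the same bound also makes $\Lambda^\alpha + \omega$ invertible on $L^2_{\rm a}(0,T)$, which is used later for regularity. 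This is the exact analogue of the role of $c_*$ in Proposition~\ref{P:min}.

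I would then run the direct method. A minimizing sequence $\{u_k\} \subset \mathcal{B}_{P_0}$ for $J_\omega$ is bounded in $H^{\alpha/2}_{\rm a}(0,T)$ by coercivity, hence converges, along a subsequence, weakly in $H^{\alpha/2}_{\rm a}(0,T)$ to some $\phi$. Since $\alpha > 1$, the embedding $H^{\alpha/2}_{\rm a}(0,T) \hookrightarrow L^{2\sigma+2}_{\rm a}(0,T)$ is compact, so $P(u_k) \to P(\phi)$ and therefore $P(\phi) = P_0 > 0$; in particular $\phi \not\equiv 0$ and $\phi \in \mathcal{B}_{P_0}$. Compactness of $H^{\alpha/2}_{\rm a} \hookrightarrow L^2_{\rm a}$ makes $Q$ weakly continuous while $K$ is weakly lower semicontinuous, so $J_\omega(\phi) \le \liminf_k J_\omega(u_k) = \inf_{\mathcal{B}_{P_0}} J_\omega$, and $\phi$ is a minimizer. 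Because $\delta P(\phi) = |\phi|^{2\sigma}\phi \neq 0$, the Lagrange multiplier rule yields $\theta \in \RM$ with $\Lambda^\alpha \phi + \omega \phi = \theta |\phi|^{2\sigma}\phi$ in the sense of distributions; pairing with $\phi$ gives $\theta = J_\omega(\phi)/((\sigma+1)P_0) > 0$. Rescaling $\phi \mapsto \theta^{1/(2\sigma)}\phi$ --- which, by the $2$- and $(2\sigma+2)$-homogeneity of $J_\omega$ and $P$, is again a constrained minimizer of $J_\omega$, now at the level $\theta^{(\sigma+1)/\sigma}P_0$ (which we relabel $P_0$) --- produces a genuine solution of \eqref{E:focusing_profile_equation} with $c = 0$ that minimizes $\mathcal{E}(\cdot;\omega)$ over its own $P$-level. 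The $C^1$ dependence of $\phi$ and of the multiplier on $(\omega, P_0)$ then follows exactly as in Proposition~\ref{P:min}.

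For the symmetry, monotonicity, and regularity I would borrow the closing steps of Lemma~\ref{L:c=0_prop} and Proposition~\ref{P:min}. The antiperiodic P\'olya--Szeg\"o inequality of Appendix~\ref{A:Polya} shows that the kinetic energy $K$ does not increase under antiperiodic symmetric rearrangement while $P$ and $Q$ are unchanged, so the minimizer may be taken real-valued, even, and sign-definite on $(-T/2,T/2)$; the strict monotonicity on $(0,T)$ comes from the strict form of the rearrangement inequality together with the profile equation, as in Lemma~\ref{L:c=0_prop}. Smoothness is a bootstrap: writing $\phi = (\Lambda^\alpha + \omega)^{-1}(|\phi|^{2\sigma}\phi)$ (legitimate since $|\omega| < (\pi/T)^\alpha$), one upgrades $H^{\alpha/2} \to L^\infty \to H^\alpha \to H^{2\alpha} \to \cdots$ using Sobolev embedding and the fractional chain rule, concluding $\phi \in H^\infty_{\rm a}(0,T)$.

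The compactness and regularity are routine; the point that needs genuine care is the bookkeeping around the Lagrange multiplier, namely verifying $\theta > 0$ so that the rescaling is legitimate and the sign of the nonlinearity is preserved (we land in the focusing profile equation, not a defocusing-type one), and --- if one insists on the literal statement with the prescribed $P_0$ --- checking that relabeling the constraint level after rescaling is harmless. The strictness in the monotonicity assertion, as opposed to mere monotonicity, is the other place where the rearrangement machinery of Appendix~\ref{A:Polya} must be invoked with some attention.
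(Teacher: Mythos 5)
Your proposal is correct and follows essentially the same route as the paper: minimizing $K+\omega Q$ over the constraint set $\{P(u)=P_0\}$ using the antiperiodic Poincar\'e inequality for coercivity, the direct method with compact embeddings, the sign determination of the Lagrange multiplier via pairing with $\phi$, a homogeneity-based rescaling with relabeling of the constraint level, and then the rearrangement and bootstrap arguments for symmetry, monotonicity, and smoothness. Your explicit identification of the multiplier as $J_\omega(\phi)/((\sigma+1)P_0)$ and the observation that $|\omega|<(\pi/T)^\alpha$ also guarantees invertibility of $\Lambda^\alpha+\omega$ on $L^2_{\rm a}(0,T)$ are slightly more detailed than the paper's treatment but amount to the same argument.
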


To prove the above proposition, the following Poincar\'e inequality will be useful.

\begin{lemma}[Poincar\'e] \label{L:poincare}
For all $\alpha>0$, we have $Q(u) \leq \left( \frac{T}{\pi} \right)^\alpha K(u)$ for all 
$u \in H^{\alpha/2}_{\rm a}(0,T)$.
\end{lemma}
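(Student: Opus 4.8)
The plan is to diagonalize both $Q$ and $K$ on the Fourier side and then exploit the single structural fact that distinguishes the antiperiodic setting: antiperiodicity forces all Fourier modes of $u$ to be \emph{odd}, so in particular the zero mode is absent and every admissible frequency satisfies $|n|\ge 1$.

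First I would record that any $u\in H^{\alpha/2}_{\rm a}(0,T)$, viewed as a $2T$-periodic function, has Fourier expansion $u(x)=\sum_{n\in\Z}\hat u(n)e^{\pi i nx/T}$ in which $\hat u(n)=0$ for every even $n$. Indeed, the antiperiodicity relation $u(x+T)=-u(x)$ is equivalent, mode by mode, to $(-1)^{n}\hat u(n)=-\hat u(n)$, which forces $\hat u(n)=0$ unless $n$ is odd; hence the expansion runs over odd integers only, for which $|n|\ge 1$. Next, Parseval's identity on the interval $[0,2T]$, combined with the antiperiodicity (which gives $\int_0^{2T}|u|^2\,dx=2\int_0^T|u|^2\,dx$, and the analogous identity for $\Lambda^{\alpha/2}u$), together with the definition of $\Lambda^{\alpha/2}$ through the Fourier multiplier $\lvert\pi n/T\rvert^{\alpha/2}$, yields
\[
Q(u)=\frac{T}{2}\sum_{n\ \mathrm{odd}}|\hat u(n)|^2,\qquad
K(u)=\frac{T}{2}\sum_{n\ \mathrm{odd}}\Bigl|\frac{\pi n}{T}\Bigr|^{\alpha}|\hat u(n)|^2 .
\]

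Finally, since $|n|\ge 1$ for every odd $n$, we have $\lvert\pi n/T\rvert^{\alpha}\ge(\pi/T)^{\alpha}$ termwise, so
\[
K(u)\ \ge\ \Bigl(\frac{\pi}{T}\Bigr)^{\alpha}\,\frac{T}{2}\sum_{n\ \mathrm{odd}}|\hat u(n)|^2\ =\ \Bigl(\frac{\pi}{T}\Bigr)^{\alpha}Q(u),
\]
which rearranges to the claimed bound $Q(u)\le(T/\pi)^{\alpha}K(u)$. There is no genuine obstacle here; the only points requiring minor care are the bookkeeping of normalization constants (the factor $T$ coming from Parseval on an interval of length $2T$, and the factor $\tfrac12$ in the definitions of $Q$ and $K$, which cancel out of the inequality) and the observation that the effective spectral gap is exactly $(\pi/T)^{\alpha}$ precisely because the lowest admissible frequency is $|n|=1$ rather than $n=0$ — this is the one place where antiperiodicity, as opposed to mere $2T$-periodicity, enters.
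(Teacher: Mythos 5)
Your argument is correct and follows essentially the same route as the paper: Parseval/Plancherel on the Fourier side combined with the fact that antiperiodicity excludes the zero mode, so every admissible frequency satisfies $|n|\ge 1$ and the termwise bound $\left|\pi n/T\right|^{\alpha}\ge(\pi/T)^{\alpha}$ applies. The paper's proof is just a one-line version of this; your write-up simply makes explicit the restriction to odd modes and the normalization bookkeeping that the paper leaves implicit.
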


\begin{proof}
By Parseval, we have
\[
Q(u) = \frac{1}{2} \| \hat{u} \|^2_{\ell^2} \leq \frac{1}{2} \left( \frac{T}{\pi} \right)^\alpha \left\| \left| \frac{\pi n}{T} \right|^{\alpha/2} \hat{u}(n) \right\|_{\ell^2_n}^2 = \left( \frac{T}{\pi} \right)^\alpha K(u),
\]
as claimed.
\end{proof}

\begin{proof}[Proof of Proposition \ref{P:focusing_existence}]
For each $\omega\in\RM\setminus\{0\}$, consider the functional
\[
R_\omega(u):=K(u)+\omega Q(u)
\]
and for each $P_0>0$ define the constraint set
\[
\mathcal{B}_{P_0}:=\left\{u\in H^{\alpha/2}_{\rm a}(0,T):P(u)=P_0\right\}.
\]
Observe that by Lemma \ref{L:poincare} we have
\[
R_\omega(u) = K(u) + \omega Q(u) \geq \left( \left( \frac{\pi}{T} \right)^\alpha + \omega \right) Q(u) > 0
\]
and hence that $R_\omega$ is bounded below on $\mathcal{B}_{P_0}$ provided 
$\omega > -\left( \frac{\pi}{T} \right)^\alpha$.  Consequently, for such $\omega$ the number 
$\lambda := \inf_{u \in \mathcal{A}} R_\omega(u)$ is well-defined, so 
there exists a minimizing sequence $\{u_{k}\}_{k=1}^\infty \subset \mathcal{B}_{P_0}$ such that 
$R_\omega(u_{k}) \to \lambda$ as $j \to \infty$. Moreover, using Lemma \ref{L:poincare} again
we find that
\[
R_\omega(u) \geq \left( 1 - \left( \frac{T}{\pi} \right)^\alpha |\omega| \right) K(u)
\]
and hence, using Lemma \ref{L:poincare} again, for $|\omega|<\left(\frac{\pi}{T}\right)^\alpha$ we have
\[
\frac{1}{2} \|u_{k}\|_{H^{\alpha/2}(0,T)}^2 
\leq \left( 1 + \left( \frac{T}{\pi} \right)^\alpha \right) K(u_{k})
\leq \frac{1 + \left( \frac{T}{\pi} \right)^\alpha}{1 - \left( \frac{T}{\pi} \right)^\alpha |\omega|}\;
	R_\omega(u_{k}). 
\]
It follows that for such $\omega$ the sequence $\{u_{k}\}_{k=1}^\infty$ is bounded in $H^{\alpha/2}(0,T)$, 
thus by Banach-Alaoglu there exists a further subsequence $\{u_{k_j}\}_{j=1}^\infty$ 
converging weakly in $H^{\alpha/2}_{\rm a}(0,T)$ and strongly (in norm) in $L^2(0,T)$ 
to some function $\phi \in H^{\alpha/2}_{\rm a}(0,T)$.  
Since $\alpha>1$ the space $H^{\alpha/2}_{\rm a}(0,T)$ is compactly embedded into 
both $L^2(0,T)$ and $L^{2\sigma+2}(0,T)$, and hence that
\[
Q(\phi) = \lim_{j \to \infty} Q(u_{k_j}) \quad \text{and} \quad P(\phi) = \lim_{j \to \infty} P(u_{k_j}) = P_0.
\]
Consequently, $\phi \in \mathcal{B}_{P_0}$.  Finally, since $K$ is weakly lower-semicontinuous on 
$H^{\alpha/2}(0,T)$, we have
\[
\liminf_{j \to \infty} R_\omega(u_{k_j}) \geq K(\phi) + \omega Q(\phi) = R_\omega(\phi)
\]
and thus
\[
\lambda \leq R_\omega(\phi) \leq \liminf_{j \to \infty} R_\omega(u_{k_j}) = \lim_{j \to \infty} R_\omega(u_{k_j}) = \lambda.
\]
It follows that for each $|\omega|<\left(\frac{\pi}{T}\right)^\alpha$ and $P_0>0$, there exists
a nontrivial $\phi\in \mathcal{B}_{P_0}$ such that
\[
R_\omega(\phi) = \inf_{u \in \mathcal{B}_{P_0}} R_\omega(u).
\]
By Lagrange multipliers, there exists $\eta\in \R$ such that
\begin{equation}\label{E:midprofile}
\Lambda^\alpha \phi + \omega \phi + \eta |\phi|^{2\sigma} \phi=0.
\end{equation}

Continuing, observe that multiplying \eqref{E:midprofile} by $\frac{1}{2} \bar{\phi}$ and integrating yields
\begin{align*}
R_\omega(\phi) + (\sigma+1)\eta P(\phi) = 0,
\end{align*}
and hence it must be that $\eta < 0$.  Rescaling $\phi\mapsto|\eta|^{-1/2\sigma}\phi$ we find that $\phi$
solves the focusing fNLS profile equation \eqref{E:focusing_profile_equation} with $c=0$.
Since $R_\omega(a\phi) = a^2 R_\omega(\phi)$, it follows that $\phi$ 
is a constrained minimizer of $R_\omega$ subject to fixed $P \equiv |\eta|^{-(1+1/\sigma)} P_0$.  
Consequently, for every $|\omega|<\left(\frac{\pi}{T}\right)^\alpha$ there exists a function
$\phi$ that minimizes $R_\omega$ subject to a suitably fixed potential energy $P\equiv P_0$.
Note that, as in the defocusing case, 
we may without loss of generality take $\phi$ to be real-valued and even since $P$ and $Q$ 
are preserved under taking absolute value and symmetric decreasing rearrangement, while $K$ 
does not increase under these operations: see Lemma \ref{L:szego}.  Furthermore, following arguments
as in Section \ref{S:existence} we find $\phi\in H^\infty_{\rm a}(0,T)$.
Finally, since 
\begin{align*}
\mathcal{E}(\phi) 
= \inf_{P(u)=P(\phi)} \left( K(u) + \omega Q(u) + P(u) \right) = \inf_{P(u)=P(\phi)} \mathcal{E}(u),
\end{align*}
it follows that the Lagrangian functional $\mathcal{E}(\cdot;\omega)$ is also minimized
by $\phi$ subject to fixed $P\equiv P(\phi)$, completing the proof.
\end{proof}

Observe that the antiperiodic standing waves constructed in Proposition \ref{P:focusing_existence} need
not be constrained minimizers of the Hamiltonian energy $K-P$.  Indeed, using the second derivative test
for constrained extrema in this case implies that
\[
\delta^2 \mathcal{E}(\phi)\Big|_{\{\delta P(\phi)\}^\perp} \geq 0,
\]
where $\delta P(\phi) = |\phi|^{2\sigma}\phi$ is real-valued.  Consequently,
the operator $\delta^2 \mathcal{E}(\phi)$ has at most one negative eigenvalue when
acting on $L^2_{\rm a}(0,T)$.  More precisely, decomposing into real and imaginary parts, the fact 
that $\delta P(\phi)$ is real-valued implies that
\[
n_-(L_+)\leq 1\quad\textrm{and}\quad L_-\geq 0.
\]
Following the spirit of the arguments in Proposition \ref{P:nondegeneracy} above, we can establish
nondegeneracy of both $L_{\pm}$ in this focusing case, as well determine the Morse index of $L_+$.

\begin{proposition}  \label{P:focusing_nondegeneracy}
Let $\alpha \in (1,2)$ and $\sigma > 0$ in the focusing ($\gamma=+1$) fNLS \eqref{E:FNLS1}.  
Let $\phi(\cdot;\omega,P_0) \in H^{\alpha/2}_{\text{a}}(0,T)$ be a real-valued local 
minimizer of $R_\omega$ subject to fixed potential energy $P\equiv P_0$, as constructed in Proposition \ref{P:focusing_existence} and assume, additionally, that $\phi$ depends on $\omega$ in a $C^1$ manner. 
Then the associated Hessian operator
acting on $L^2_{\rm a}(0,T)$ is nondegenerate, i.e.
\[
\ker(\delta^2 \mathcal{E}(\phi)) = \mathrm{span}\{ \phi', i\phi \}
\]
and $n_-(\delta^2 \mathcal{E}(\phi)) = 1$.  Specifically, the operators $L_\pm$ are nondegenerate acting on $L^2_{\text{a}}(0,T)$ with
\[
\ker(L_+) = \mathrm{span}\{\phi'\}, \quad \text{and} \quad \ker(L_-) = \mathrm{span}\{\phi\}
\]
and, further, $n_-(L_+)=1$ and $n_-(L_-)=0$.
\end{proposition}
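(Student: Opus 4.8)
The strategy is to transplant the argument behind Proposition \ref{P:nondegeneracy} to this setting, with the roles of $L_+$ and $L_-$ exchanged. Here $\phi$ is the real-valued, even, antiperiodic minimizer of Proposition \ref{P:focusing_existence}, strictly decreasing on $(0,T)$, so that $\phi(T/2)=0$, $\phi>0$ on $(-T/2,T/2)$, and $\phi'$ does not change sign on $(0,T)$; moreover the only constraint defining $\phi$ is $P\equiv P_0$, and $\delta P(\phi)=|\phi|^{2\sigma}\phi$ is real-valued and even, so the constrained second-derivative test gives $L_-\ge 0$ (no constraint on the imaginary sector) and $n_-(L_+)\le 1$, as already noted above. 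I would first assemble the analogue of Lemma \ref{L:specprop}: from \eqref{E:focusing_profile_equation} with $c=0$ one has $L_-\phi=0$ and $L_+\phi'=0$, so by Theorem \ref{T:ground} the even function $\phi$ is the even antiperiodic ground state of $L_-$ and the odd function $\phi'$ is the odd antiperiodic ground state of $L_+$, both at eigenvalue $0$. Since $\phi$ is even and decreasing on $(0,T/2)$, the potentials $-(2\sigma+1)|\phi|^{2\sigma}+\omega$ and $-|\phi|^{2\sigma}+\omega$ of $L_\pm$ are even, $T$-periodic and nondecreasing on $(0,T/2)$, so Proposition \ref{P:gsordering}(ii) yields $\min\sigma(L_\pm|_{\rm even})\le\min\sigma(L_\pm|_{\rm odd})$. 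Finally, from $L_+=L_--2\sigma|\phi|^{2\sigma}$ together with $L_-\phi=0$ and $L_+\phi'=0$ one reads off $|\phi|^{2\sigma}\phi=-\tfrac1{2\sigma}L_+\phi\in\range(L_+)$ and $|\phi|^{2\sigma}\phi'=\tfrac1{2\sigma}L_-\phi'\in\range(L_-)$, and differentiating \eqref{E:focusing_profile_equation} in $\omega$ along the $C^1$ family of Proposition \ref{P:focusing_existence} gives $L_+\partial_\omega\phi=-\phi$, hence $\phi\in\range(L_+)$; this use of the $\omega$-family is the focusing analogue of the role played by the traveling-wave family in Remark \ref{r:travel}.

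The operator $L_-$ is then immediate and needs no oscillation theory. Since $L_-\ge 0$ and $\phi$ is its even ground state, the ordering gives $0=\min\sigma(L_-|_{\rm even})\le\min\sigma(L_-|_{\rm odd})$. If the odd ground state of $L_-$ were also at $0$ there would be an odd $\psi\in\ker(L_-)$, sign-definite on $(0,T)$ by Theorem \ref{T:ground}(b); but $|\phi|^{2\sigma}\phi'$ has fixed sign on $(0,T)$, so $\langle\psi,|\phi|^{2\sigma}\phi'\rangle\ne 0$, contradicting the Fredholm alternative since $|\phi|^{2\sigma}\phi'\in\range(L_-)=\ker(L_-)^\perp$. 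Hence $\min\sigma(L_-|_{\rm odd})>0$, $\ker(L_-)={\rm span}\{\phi\}$, and $n_-(L_-)=0$.

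For $L_+$ I would first fix the Morse index: $\min\sigma(L_+|_{\rm odd})=0$ (attained at $\phi'$), so $\min\sigma(L_+|_{\rm even})\le 0$; were it $0$, then $L_+\ge 0$ with simple even ground state $v$ sign-definite on $(-T/2,T/2)$, and $\langle v,\phi\rangle\ne 0$ with $\phi\in\range(L_+)$ would contradict Fredholm. Thus $\min\sigma(L_+|_{\rm even})<0$, which with $n_-(L_+)\le 1$ forces $n_-(L_+)=1$, the unique negative eigenvalue being the even ground state. Now suppose $v\in\ker(L_+|_{\rm even})$ is nontrivial. Then $0$ is exactly the second distinct even eigenvalue of $L_+$, so by the antiperiodic oscillation theory Lemma \ref{L:oscillation} $v$ has at most two sign changes on $(-T/2,T/2)$, hence $0$ or $2$ by evenness. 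If $0$, then $v$ is sign-definite and $\langle v,\phi\rangle\ne 0$ with $\phi\in\range(L_+)$ — a contradiction. If $2$, the changes occur at $\pm x_0$ with $x_0\in(0,T/2)$; put $g:=|\phi|^{2\sigma}\phi-|\phi(x_0)|^{2\sigma}\phi=\phi\bigl(|\phi|^{2\sigma}-|\phi(x_0)|^{2\sigma}\bigr)$, which lies in $\range(L_+)$, is even, and — because $\phi>0$ and strictly decreasing on $(0,T/2)$ — vanishes on $(-T/2,T/2)$ exactly at $\pm x_0$, with the same sign pattern as $v$ up to an overall sign, so that $vg$ has a fixed sign and $\langle v,g\rangle\ne 0$, again contradicting Fredholm. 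Therefore $\ker(L_+)={\rm span}\{\phi'\}$. Combining, $\ker(\delta^2\mathcal{E}(\phi))=\ker\diag(L_+,L_-)={\rm span}\{\phi',i\phi\}$ and $n_-(\delta^2\mathcal{E}(\phi))=n_-(L_+)+n_-(L_-)=1$.

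I expect the last paragraph — excluding a nontrivial even element of $\ker(L_+)$ — to be the crux, just as in the defocusing case. The two sensitive ingredients are that $\phi\in\range(L_+)$, which is \emph{not} algebraic but relies on the $C^1$ dependence of the profile on $\omega$ from Proposition \ref{P:focusing_existence}, and the two-sign-change case, where the test function $g$ must simultaneously be even, belong to $\range(L_+)$, and be sign-paired with $v$ on $(-T/2,T/2)$; the strict monotonicity of $\phi$ on $(0,T/2)$ furnished by Proposition \ref{P:focusing_existence} is exactly what makes this bookkeeping go through.
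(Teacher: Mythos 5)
Your proof is correct and follows essentially the same route as the paper's: the constrained second-derivative test gives $L_-\ge 0$ and $n_-(L_+)\le 1$; Theorem \ref{T:ground} and Proposition \ref{P:gsordering}(ii) identify $\phi$ and $\phi'$ as the even and odd ground states of $L_-$ and $L_+$ at eigenvalue zero; $\phi\in\range(L_+)$ via $L_+\partial_\omega\phi=-\phi$ rules out $L_+\ge 0$, forcing $n_-(L_+)=1$; and the putative even kernel element of $L_+$ is excluded by Lemma \ref{L:oscillation} together with the test function $\phi\bigl(\phi^{2\sigma}-\phi(x_0)^{2\sigma}\bigr)\in\range(L_+)$, exactly as in the paper. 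The one point where you diverge is in excluding an odd kernel element of $L_-$: the paper tests against $\phi'\in\range(L_-)$, which in the defocusing section was justified by differentiating along the traveling-wave family --- a family not constructed in the focusing setting --- whereas you use the purely algebraic identity $L_-\phi'=2\sigma\phi^{2\sigma}\phi'$ to test against $\phi^{2\sigma}\phi'$, a cleaner and fully self-contained choice.
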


\begin{proof}
Since $L_-\phi=0$ and $\phi$ is even, it follows from Theorem \ref{T:ground} that $\phi$ is the ground state
of $L_-$ on the even subspace $L^2_{\rm a,even}(0,T)$, and hence $\lambda=0$ is a simple eigenvalue
of $L_-$ restricted to the even subspace.  If $L_-$ were degenerate there would exist
a function $\psi\in L^2_{\rm a,odd}(0,T)$ such that $L_-\psi=0$.  But the ground state theory
Theorem \ref{T:ground} again would imply that $\psi$ may be taken to be strictly positive on $(0,T)$, and hence
cannot be orthogonal to the function $\phi'\in{\rm range}(L_-)$, contradicting the Fredholm alternative.
Consequently, $L_-$ is nondegenerate on $L^2_{\rm a}(0,T)$ and $L_-\geq 0$, as claimed.

Next, we claim that $L_+$ is nondegenerate.  First, note that since $L_+\phi'=0$ and $\phi'$ is strictly
negative on $(0,T)$, it follows by  Theorem \ref{T:ground} that $\phi'$ is the ground
state eigenfunction\footnote{Recall eigenfunctions corresponding to distinct eigenvalues of $L_+$ must be orthogonal, hence sign-definite eigenfunctions correspond to the ground state eigenvalues.} of $L_+$ on the odd subspace $L^2_{\rm a,odd}(0,T)$.  Hence, $\lambda=0$ is a simple
eigenvalue of $L_+$ restricted to the odd subspace.  Since $\phi$ is strictly decreasing on $(0,T)$ and
$\gamma=+1$ here, Proposition \ref{P:gsordering}(ii) implies that the ground state eigenvalue of $L_+$
on $L^2_{\rm a}(0,T)$ is nonpositive and has at least one even eigenfunction. 
To show nondegeneracy, we first show that $L_+$ has exactly one negative eigenvalue.
If $L_+\geq 0$, then by the above discussion $\lambda=0$ must be the ground state eigenvalue
for $L_+$ on the even subspace, and hence there exists a $\psi\in L^2_{\rm a,even}(0,T)$ 
with $\psi(x)>0$ on $(-T/2,T/2)$ such that $L_+\psi=0$.  This, however, contradicts the Fredholm
alternative since such a $\psi$ could not be orthogonal to the function $\phi$, which 
is also strictly positive on $(-T/2,T/2)$ lies in the range of $L_+$ since, differentiating
 \eqref{E:focusing_profile_equation} with $c=0$ with respect to $\omega$ yields
\[
L_+\frac{\partial\phi}{\partial\omega}=-\phi.
\]
Thus, the even ground state eigenvalue of $L_+$ must be strictly
negative, establishing that $n_-(L_+)=1$ as claimed.

To conclude nondegeneracy of $L_+$, note now that $\lambda=0$ must be the second eigenvalue of
$L_+$.  If $L_+$ were degenerate, the simplicity of $\lambda=0$ on the odd subspace $L^2_{\rm a,odd}(0,T)$
implies that there must exist a function $\zeta\in L^2_{\rm a,even}(0,T)$ 
such that $L_+\zeta=0$.  By the oscillation theory Lemma \ref{L:oscillation}, it follows
then that $\zeta$ can change signs at most twice on $(-T/2,T/2)$.  Clearly $\zeta$ cannot 
be sign-definite on $(-T/2,T/2)$ since then $\zeta$ would not be orthogonal to $\phi$, contradicting
again the Fredholm alternative.  Since $\zeta$ is even, it may be normalized so there exists an $x_0\in(0,T/2)$
such that $\zeta(x)>0$ on $(-x_0,x_0)$ and $\zeta(x)<0$ for $x\in(-T/2,-x_0)\cup(x_0,T_0)$.
However, one can easily check that the function
\[
\phi(x)\left( \phi(x)^{2\sigma} - \phi(x_0)^{2\sigma}\right)\in{\rm range}(L_+)
\]
has the same nodal pattern of $\zeta$, again contradicting the Fredholm alternative.  It follows 
that $\ker(L_+|_{L^2_{\text{a,even}}(0,T)}) = \{0\}$, and hence 
$\ker(L_+|_{L^2_{\text{a}}(0,T)}) = \text{span}\{\phi'\}$, as claimed.
\end{proof}

\begin{remark}\label{R:focusing_signdefinite}
Although we do not pursue full arguments here, it is worth noting that the nondegeneracy of the sign
definite solutions of the focusing fNLS with $\omega<0$ follows directly from the analysis
of Hur and Johnson in \cite{HJ15}.  In that case, we would consider positive, $T$-\emph{periodic}
solutions $\phi$ of \eqref{E:focusing_profile_equation} with $c=0$ that would be even, smooth, and strictly
decreasing on $(0,T/2)$.  Such waves could be constructed as critical points of $R_\omega$ above considered
now as acting on $L^2_{\rm per}(0,T)$.  As above,  these 
waves would have $L_-\geq 0$ thanks to the ground state characterization
Theorem \ref{T:ground}, and nondegeneracy of $L_-$ with respect to $T$-periodic perturbations would follow
as above.  For $L_+$ one would have $n_-(L_+)=1$ and nondegeneracy would follow directly from the arguments
in \cite[Section 3]{HJ15}.  The stability of these waves should then follow the same characterization
as described in the forthcoming analysis, depending on the sign of the Jacobian 
$\frac{\partial Q}{\partial\omega}(\phi)$.
\end{remark}

Recapitulating, for the focusing fNLS we have constructed a four-parameter family\footnote{We can parameterize by
$\omega$ and $P_0$, together with the continuous Lie-point symmetries coming from translational and gauge invariances
of the governing evolution equation.} of real-valued, even antiperiodic solutions whose linearizations are necessarily
nondegenerate.  Since such solutions were not constructed as constrained minimizers of the Hamiltonian energy,
however, the nonlinear stability of these waves is not guaranteed as it was for the defocusing analysis.  Rather,
as is common with the local case, the stability depends on the sign of the quantity 
$\frac{\partial Q}{\partial \omega}$.
As a preliminary step in this direction, we show that if $\frac{\partial Q}{\partial\omega}$ 
is positive at the underlying
wave, then this wave is necessarily a constrained minimizer of the Hamiltonian energy subject to fixed
$L^2$-norm.

\begin{lemma}  \label{L:focusing_nonnegative_dQ}
Let $\phi=\phi(\cdot;\omega)$ be a real-valued, even $T$-antiperiodic solution
of the focusing fNLS satisfying the assumptions of Proposition \ref{P:focusing_nondegeneracy}.
If $\frac{\partial Q}{\partial \omega}(\phi) > 0$, then $\phi$ minimizes the Lagrangian functional
$\mathcal{E}$ subject to fixed $L^2$-norm, i.e.
\[
\delta^2 \mathcal{E}(\phi) \big|_{\{ \delta Q(\phi) \}^\perp} \geq 0,
\]
In particular, for such a wave we necessarily have $L_+|_{\{\phi\}^\perp}\geq 0$.
\end{lemma}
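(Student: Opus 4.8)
The plan is to reduce the assertion to a statement about the scalar operator $L_+$ and then apply a standard constrained-index count. Since $\delta^2\mathcal{E}(\phi)=\diag(L_+,L_-)$ acts diagonally on $L^2_{\rm a}([0,T];\RM)^2$ and $\delta P(\phi)=|\phi|^{2\sigma}\phi$ as well as $\delta Q(\phi)=\phi$ are real-valued, writing $v=a+ib$ with $a,b$ real gives $\langle\delta^2\mathcal{E}(\phi)v,v\rangle=\langle L_+a,a\rangle+\langle L_-b,b\rangle$, while the constraint $v\perp\delta Q(\phi)$ amounts to $\langle\phi,a\rangle=0$ and places no restriction on $b$. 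As $L_-\geq0$ by Proposition \ref{P:focusing_nondegeneracy}, the inequality $\delta^2\mathcal{E}(\phi)|_{\{\delta Q(\phi)\}^\perp}\geq0$ is \emph{equivalent} to $L_+|_{\{\phi\}^\perp}\geq0$, so it suffices to prove the latter, which is moreover the ``in particular'' assertion of the lemma.

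To this end I would first record that $\phi$ is orthogonal to $\ker(L_+)=\mathrm{span}\{\phi'\}$, since $\int_0^T\phi\phi'\,dx=\tfrac12\int_0^T(\phi^2)'\,dx=0$ by the $T$-periodicity of $\phi^2$. Hence $\phi\in\mathrm{range}(L_+)$ and the quantity $\langle L_+^{-1}\phi,\phi\rangle$ is unambiguously defined, with $L_+^{-1}$ the inverse of $L_+$ on $(\ker L_+)^\perp=\mathrm{range}(L_+)$. Differentiating the profile equation \eqref{E:focusing_profile_equation} (with $c=0$) in $\omega$, exactly as in the proof of Proposition \ref{P:focusing_nondegeneracy}, yields $L_+\frac{\partial\phi}{\partial\omega}=-\phi$, so that (using $\phi\in\mathrm{range}(L_+)$ to drop the range-projection)
\[
\langle L_+^{-1}\phi,\phi\rangle=-\Big\langle\frac{\partial\phi}{\partial\omega},\phi\Big\rangle=-\frac{\partial}{\partial\omega}\!\left(\frac12\int_0^T\phi^2\,dx\right)=-\frac{\partial Q}{\partial\omega}(\phi)<0
\]
by hypothesis.

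It then remains to invoke the standard index identity for constrained self-adjoint operators: if $L$ is self-adjoint, bounded below, with compact resolvent, $n_-(L)=1$, and $\psi\perp\ker L$ with $\langle L^{-1}\psi,\psi\rangle<0$, then $n_-\!\left(L|_{\{\psi\}^\perp}\right)=n_-(L)-1=0$, i.e.\ $L|_{\{\psi\}^\perp}\geq0$; see, e.g., \cite[Theorem 2.1]{KP12} or \cite[Chapter 5]{KP13}. Applying this with $L=L_+$ (so $n_-(L_+)=1$ by Proposition \ref{P:focusing_nondegeneracy}) and $\psi=\phi$ gives $L_+|_{\{\phi\}^\perp}\geq0$, hence $\delta^2\mathcal{E}(\phi)|_{\{\delta Q(\phi)\}^\perp}\geq0$, completing the proof.

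The main obstacle is not any delicate estimate but rather checking the hypotheses of the index identity cleanly: one must verify that $\phi$ is genuinely orthogonal to $\ker L_+$ (so that $L_+^{-1}\phi$, and thus the sign of $\langle L_+^{-1}\phi,\phi\rangle$, is well-defined) and that the inequality $\langle L_+^{-1}\phi,\phi\rangle<0$ is \emph{strict} — which is precisely what the assumption $\frac{\partial Q}{\partial\omega}(\phi)>0$ supplies. Were this derivative to vanish, the constrained kernel would acquire an extra direction and nonnegativity could fail, which is exactly why the strict sign condition appears in the statement.
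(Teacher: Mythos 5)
Your argument is correct and follows essentially the same route as the paper: both reduce the claim to $L_+|_{\{\phi\}^\perp}\geq 0$ using $L_-\geq 0$ and the diagonal structure, verify $\phi\perp\ker(L_+)$ with $L_+^{-1}\phi=-\frac{\partial\phi}{\partial\omega}$ so that $\innerprod{L_+^{-1}\phi}{\phi}=-\frac{\partial Q}{\partial\omega}(\phi)<0$, and then invoke the standard one-negative-eigenvalue index identity (the paper cites \cite[Lemma 1]{HSS16} where you cite the equivalent Kapitula--Promislow result). Your version is slightly more explicit about the real/imaginary decomposition and the orthogonality $\int_0^T\phi\phi'\,dx=0$, but the substance is identical.
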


\begin{proof}
We use a recent result \cite[Lemma 1]{HSS16} regarding the non-negativity of a self-adjoint operator having a spectral gap and exactly one negative eigenvalue per the above discussion.  We readily verify the hypotheses of their result, as $L_+$ is self-adjoint on the Hilbert space $H_{\text{a}}^{\alpha/2}(0,T)$, and $L_+$ has exactly one negative eigenvalue, which is the (simple) ground state.  Lastly, observe that $\phi \in \ker(L_+)^\perp$, with $L_+^{-1} \phi = -\frac{\partial \phi}{\partial \omega}$.  Then
\[
\innerprod{L_+^{-1} \phi}{\phi} = -\innerprod{\frac{\partial \phi}{\partial \omega}}{\phi} = -\frac{\partial Q}{\partial \omega}(\phi(\cdot; \omega)) < 0,
\]
hence by \cite[Lemma 1]{HSS16} it follows that $L_+|_{\{\phi\}^\perp} \geq 0$.  Since $L_- \geq 0$ a-priori, we conclude that $\delta^2 \CalE(\phi)|_{\{\phi\}^\perp} \geq 0$.
\end{proof}

For the waves constructed in Proposition \ref{P:focusing_existence},we have now established that 
$\delta^2 \mathcal{E}(\phi)$ is non-degenerate, with  
$\ker(\delta^2 \mathcal{E}(\phi)) = \mathrm{span}\{ \phi', i\phi \}$ and that, 
by Lemma \ref{L:focusing_nonnegative_dQ}, that such waves are constrained minimizers of the Hamiltonian
energy subject to fixed $L^2$ norm provided that $\frac{\partial Q}{\partial \omega}(\phi) > 0$.
As in Section \ref{S:stability_of_minimizers}, such conditions are sufficient to conclude the
nonlinear orbital stability of a given wave with respect to $T$-antiperiodic perturbations that slightly change
$Q$.  Indeed, the proof of coercivity is very similar to that of Proposition \ref{P:coercivity}, and the stability result ensues with very few modifications\footnote{In fact, the required analysis follows the stability
theory for the solitary wave case \cite{Wein86}.} to the proof Theorem \ref{T:orbital_stability}.  
Due to the similarly, we only state the result.

\begin{theorem} \label{T:focusing_orbital_stability}
Suppose $\alpha \in (1,2]$, $\sigma>0$ and that $X$ is a suitable subspace of $H^{\alpha/2}_{\text{a}}([0,T]; \C)$ where the Cauchy problem associated with
\[
i u_t - \Lambda^\alpha u + \abs{u}^{2\sigma} u = 0
\]
is well-posed and the functionals $\mathcal{H},Q,N : X \to \R$ are smooth.
Further, let $\phi_0 \in H^{\alpha/2}_{\rm a}(0,T)$ be a real-valued $T$-antiperiodic standing solution of \eqref{E:focusing_profile_equation} satisfying the hypotheses of Proposition \ref{P:focusing_nondegeneracy}.
If 
\[
\frac{\partial Q}{\partial \omega}(\phi_0) > 0,
\]
then for all $\varepsilon > 0$ sufficiently small there exists a constant $C=C(\varepsilon)$ such that if $v \in X$ with $\|v\|_X \leq \varepsilon$ and $u(\cdot,t)$ is a local in time solution of 
\[
i u_t -\omega u- \Lambda^\alpha u + \abs{u}^{2\sigma} u = 0
\]
with initial data $u(\cdot,0) = \phi_0 + v$, then $u(\cdot,t)$ can be continued to a solution for all $t > 0$ and
\[
\sup_{t > 0} \inf_{(\beta,x_0) \in \R^2} \left\| u(\cdot,t) - e^{i\beta} \phi_0(\cdot - x_0) \right\|_X \leq C\|v\|_X.
\]
\end{theorem}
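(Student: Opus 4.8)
The plan is to transcribe the Lyapunov-functional argument of Section~\ref{S:stability_of_minimizers}, the one structural simplification being that in the focusing case a \emph{single} conserved constraint suffices. Write $\omega_0:=\omega(\phi_0)$ and let $\mathcal{E}_0(u):=\mathcal{E}(u;\omega_0)=K(u)-P(u)+\omega_0 Q(u)$ be the modified energy of Proposition~\ref{P:focusing_existence}, so that the evolution in the theorem is the Hamiltonian system $u_t=-i\,\delta\mathcal{E}_0(u)$, along whose flow $Q$ and the Hamiltonian $K-P$ (hence $\mathcal{E}_0$) are conserved, and $\phi_0$ is a critical point of $\mathcal{E}_0$ with $\delta^2\mathcal{E}_0(\phi_0)=\diag(L_+,L_-)$. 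By Proposition~\ref{P:focusing_nondegeneracy} this Hessian is nondegenerate with $\ker(\delta^2\mathcal{E}_0(\phi_0))=\mathrm{span}\{\phi_0',i\phi_0\}$ — precisely the tangent directions generated by translation and gauge invariance — and $n_-(\delta^2\mathcal{E}_0(\phi_0))=1$, the single negative direction sitting in the real ($L_+$) block. The hypothesis $\partial_\omega Q(\phi_0)>0$ then supplies, via Lemma~\ref{L:focusing_nonnegative_dQ}, the matching \emph{real} constraint $L_+|_{\{\phi_0\}^\perp}\geq 0$, so that $\delta^2\mathcal{E}_0(\phi_0)|_{\{\delta Q(\phi_0)\}^\perp}\geq 0$; note that, in contrast to the defocusing theorem, there is no need to also fix $N$, and hence no restriction on $v$.

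The core of the proof is the constrained coercivity estimate
\[
\langle\delta^2\mathcal{E}_0(\phi_0)v,v\rangle\geq C\|v\|_X^2,\qquad v\in\{\delta Q(\phi_0)\}^\perp,\ v\perp\mathrm{span}\{\phi_0',i\phi_0\},
\]
the focusing analogue of Lemma~\ref{L:infimum}. Letting $\Pi$ be the orthogonal projection onto $\{\delta Q(\phi_0)\}^\perp=\{\phi_0\}^\perp$, the operator $\Pi\,\delta^2\mathcal{E}_0(\phi_0)\,\Pi$ is self-adjoint, lower semibounded, has purely discrete spectrum, is $\geq 0$, and — crucially — has kernel exactly $\mathrm{span}\{\phi_0',i\phi_0\}$. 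The only point that is not immediate is that restricting to $\{\phi_0\}^\perp$ does not manufacture an extra kernel direction; this follows from the index identity of \cite[Theorem~2.1]{KP12} applied to $L_+$, together with the relation $L_+^{-1}\phi_0=-\partial_\omega\phi_0$ obtained by differentiating \eqref{E:focusing_profile_equation} with $c=0$ in $\omega$, and the computation $\langle L_+^{-1}\phi_0,\phi_0\rangle=-\partial_\omega Q(\phi_0)\neq 0$. The resulting spectral gap above $0$ yields the coercivity bound in $L^2_{\rm a}(0,T)$, which upgrades to the $X$-norm as in \cite[Lemma~5.2.3]{KP13}. This constrained-kernel bookkeeping is the only genuine obstacle; everything else is routine transcription.

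Granting this, the coercivity of $\mathcal{E}_0$ on the manifold $\Sigma_0:=\{u\in X:Q(u)=Q(\phi_0)\}$ in a tube around $\mathcal{O}_{\phi_0}$ follows exactly as in Proposition~\ref{P:coercivity}: the implicit function theorem produces $C^1$ modulation maps $\beta(u),\tau(u)$ with $h:=e^{i\beta(u)}u(\cdot+\tau(u))-\phi_0\perp\mathrm{span}\{\phi_0',i\phi_0\}$; writing $h=C_1\phi_0+y$ with $y\perp\mathrm{span}\{\phi_0,\phi_0',i\phi_0\}$, the constraint $Q(u)=Q(\phi_0)$ forces $C_1=\mathcal{O}(\|h\|_X^2)$, and a Taylor expansion reduces $\mathcal{E}_0(u)-\mathcal{E}_0(\phi_0)$ to $\tfrac12\langle\delta^2\mathcal{E}_0(\phi_0)y,y\rangle+o(\|h\|_X^2)\geq C\rho(u,\phi_0)^2$. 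The stability statement then closes by the standard continuity argument: for $\|v\|_X\leq\varepsilon$ one has $\mathcal{E}_0(\phi_0+v)-\mathcal{E}_0(\phi_0)=\mathcal{O}(\varepsilon^2)$ since $\phi_0$ is critical; if $Q(\phi_0+v)=Q(\phi_0)$ the solution stays in $\Sigma_0$ and conservation of $\mathcal{E}_0$ forces $\rho(u(\cdot,t),\phi_0)^2\leq C\varepsilon^2$ for all $t>0$, while if $Q(\phi_0+v)\neq Q(\phi_0)$ one replaces $\phi_0$ by the nearby wave $\phi_\varepsilon=\phi(\cdot;\tilde\omega)$ with $Q(\phi_\varepsilon)=Q(\phi_0+v)$ and $\|\phi_\varepsilon-\phi_0\|_X=\mathcal{O}(\varepsilon)$ — available because $\partial_\omega Q(\phi_0)>0$ makes $\omega\mapsto Q(\phi(\cdot;\omega))$ a local diffeomorphism — applies the coercivity estimate about $\phi_\varepsilon$ for the conserved functional $\mathcal{E}_\varepsilon:=(K-P)+\tilde\omega Q$, and finishes with the triangle inequality. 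As the authors remark, this focusing argument is in fact closer to the solitary-wave stability theory of \cite{Wein86} than to the two-constraint defocusing case.
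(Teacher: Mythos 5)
Your proposal is correct and follows precisely the route the paper intends: the authors omit the proof, stating only that it is obtained from Proposition \ref{P:coercivity} and Theorem \ref{T:orbital_stability} with ``very few modifications,'' and your write-up supplies exactly those modifications --- replacing the two constraints $(Q,N)$ by the single constraint $Q$, using Lemma \ref{L:focusing_nonnegative_dQ} together with the index identity and $\langle L_+^{-1}\phi_0,\phi_0\rangle=-\partial_\omega Q(\phi_0)<0$ to get the constrained kernel and coercivity, and matching $Q$ by varying $\omega$ in the endgame. The details you fill in (in particular that the component of $h$ along $i\phi_0'$ needs no separate treatment since $i\phi_0'$ is not in $\ker\delta^2\mathcal{E}_0(\phi_0)$ in the focusing case) are accurate.
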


We remark that the criterion $\frac{\partial Q}{\partial \omega}(\phi_0) > 0$ in Theorem 
\ref{T:focusing_orbital_stability} is a sufficient condition for 
orbital stability in the focusing case, compensating for the fact that $\phi$ was not constructed as a minimizer of $\mathcal{E}$ with fixed $Q$: see Lemma \ref{L:focusing_nonnegative_dQ}.  
Conversely, $\phi_0$ can be shown to be spectrally unstable if $\frac{\partial Q}{\partial \omega}(\phi_0) < 0$.

\begin{proposition}\label{P:focusing_instability}
Let $\phi_0$ be as in Theorem \ref{T:focusing_orbital_stability}.  If $\frac{\partial Q}{\partial \omega}(\phi_0) < 0$, then $\phi_0$ is spectrally unstable.
\end{proposition}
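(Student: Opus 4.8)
The plan is to exhibit a real \emph{positive} eigenvalue of the linearized operator
\[
J\mathcal{L}:=\left(\begin{array}{cc}0&1\\-1&0\end{array}\right)\left(\begin{array}{cc}L_+&0\\0&L_-\end{array}\right)
\]
acting on $L^2_{\rm a}(0,T)\times L^2_{\rm a}(0,T)$, where $L_\pm$ are the operators \eqref{D:Lplus}--\eqref{D:Lminus} associated to $\phi_0$ with $\gamma=+1$; producing such an eigenvalue is precisely the spectral instability of the standing wave $e^{i\omega_0 t}\phi_0$. Writing the eigenvalue relation $J\mathcal{L}(p,q)^{\mathrm T}=\lambda(p,q)^{\mathrm T}$ as $L_-q=\lambda p$ and $-L_+p=\lambda q$, one checks that for $\lambda\neq0$ it is equivalent to producing a nonzero $q$ with $L_+L_-q=-\lambda^2 q$ (and conversely any such $q$ with $\lambda>0$ yields the eigenvector $(\lambda^{-1}L_-q,\,q)$). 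So it suffices to find $\lambda>0$ and $q\neq0$ solving $L_+L_-q=-\lambda^2q$.

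Recall from Proposition \ref{P:focusing_nondegeneracy} that $L_-\geq0$ with $\ker L_-={\rm span}\{\phi_0\}$, that $n_-(L_+)=1$, and that $\ker L_+={\rm span}\{\phi_0'\}$. I would introduce the self-adjoint operator $M:=L_-^{1/2}L_+L_-^{1/2}$, which annihilates $\phi_0$, leaves $\{\phi_0\}^\perp$ invariant, and (like $L_\pm$) has compact resolvent. Since $L_-^{1/2}$ commutes with $L_-$ and restricts to a bijection of $H^{\alpha}_{\rm a}(0,T)\cap\{\phi_0\}^\perp$ onto $H^{\alpha/2}_{\rm a}(0,T)\cap\{\phi_0\}^\perp$, the identity $\innerprod{Mr}{r}=\innerprod{L_+s}{s}$ with $s:=L_-^{1/2}r$ — where $s$ ranges over $H^{\alpha/2}_{\rm a}(0,T)\cap\{\phi_0\}^\perp$ as $r$ ranges over the form domain of $M$ in $\{\phi_0\}^\perp$ — shows that the bottom eigenvalue of $M|_{\{\phi_0\}^\perp}$ is strictly negative precisely when $L_+$ fails to be nonnegative on $\{\phi_0\}^\perp$.

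To decide the latter I would invoke the index trichotomy underlying Lemma \ref{L:focusing_nonnegative_dQ}: for a self-adjoint operator $A$ with a spectral gap, exactly one negative eigenvalue, and a vector $\psi\perp\ker A$, the restriction $A|_{\{\psi\}^\perp}$ is nonnegative, nonnegative with nontrivial kernel, or has exactly one negative eigenvalue, according to whether $\innerprod{A^{-1}\psi}{\psi}$ is negative, zero, or positive; see \cite[Lemma~1]{HSS16} for the first alternative and a continuation argument in the resolvent $\mu\mapsto\innerprod{(A-\mu)^{-1}\psi}{\psi}$ for the remaining dichotomy. Here $A=L_+$ and $\psi=\phi_0\perp\phi_0'=\ker L_+$ (even vs.\ odd), and differentiating \eqref{E:focusing_profile_equation} with $c=0$ in $\omega$ gives $L_+\partial_\omega\phi_0=-\phi_0$, so $L_+^{-1}\phi_0=-\partial_\omega\phi_0$ and $\innerprod{L_+^{-1}\phi_0}{\phi_0}=-\partial_\omega Q(\phi_0)>0$ under the standing hypothesis $\partial_\omega Q(\phi_0)<0$. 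Hence $L_+|_{\{\phi_0\}^\perp}$ carries exactly one negative direction, so $M|_{\{\phi_0\}^\perp}$ has an eigenvalue $-\lambda^2<0$ with (smooth) eigenfunction $r^\ast$.

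Finally I would pass back: writing $r^\ast=L_-^{1/2}q^\ast$ with $q^\ast\in H^\infty_{\rm a}(0,T)\cap\{\phi_0\}^\perp$, the relation $Mr^\ast=-\lambda^2r^\ast$ becomes $L_-^{1/2}\bigl(L_+L_-q^\ast+\lambda^2q^\ast\bigr)=0$, so $L_+L_-q^\ast+\lambda^2q^\ast=c\phi_0$ for some $c\in\R$; replacing $q^\ast$ by $q:=q^\ast-(c/\lambda^2)\phi_0\neq0$ and using $L_-\phi_0=0$ kills the residual term and gives $L_+L_-q=-\lambda^2q$, whence $(p,q)=(\lambda^{-1}L_-q,\,q)$ is an eigenvector of $J\mathcal{L}$ with eigenvalue $\lambda>0$. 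Elliptic regularity (as in Proposition \ref{P:min}) confirms $(p,q)\in H^{\alpha}_{\rm a}(0,T)^2$, so $\lambda$ is a bona fide eigenvalue and $\phi_0$ is spectrally unstable. The main obstacle I anticipate is the functional-analytic bookkeeping for $L_-^{1/2}$ — its mapping properties on the codimension-one space $\{\phi_0\}^\perp$ and the faithful transport of negativity through the form identity — together with the converse half of the index trichotomy, which is not literally contained in \cite[Lemma~1]{HSS16}. A convenient way to simplify part of this is to note that the negative eigenfunction of $L_+$ is even (Proposition \ref{P:focusing_nondegeneracy}), so the unstable mode lives in $L^2_{\rm a,even}(0,T)$, where $\ker L_+=\{0\}$ and the trichotomy is transparent; running the whole argument in the even sector also localizes the use of $L_-^{1/2}$. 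Alternatively, one could deduce the same conclusion from the general Hamiltonian--Krein index count, since here $n_-(\mathcal L)=n_-(L_+)+n_-(L_-)=1$ while the relevant constraint matrix, essentially $\partial_\omega Q(\phi_0)$, is negative.
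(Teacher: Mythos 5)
Your proposal is correct and follows essentially the same route the paper takes: the paper simply defers to the standard Vakhitov--Kolokolov projection method of \cite[Section 4.1]{SS99}, and your argument --- reducing the eigenvalue problem for $J\mathcal{L}$ to $L_+L_-q=-\lambda^2q$, transferring negativity through $M=L_-^{1/2}L_+L_-^{1/2}$ on $\{\phi_0\}^\perp$, and detecting the negative direction of $L_+|_{\{\phi_0\}^\perp}$ from the sign of $\innerprod{L_+^{-1}\phi_0}{\phi_0}=-\partial_\omega Q(\phi_0)$ --- is precisely that method, written out in full. The only added value over the paper's one-line citation is that you actually supply the details (including the resolvent-continuation half of the index trichotomy and the removal of the $\ker L_-$ residual), all of which check out.
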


The proof follows the now standard Vakhitov-Kolokolov projection method, and the interested reader is referred
to \cite[Section 4.1]{SS99} for details.

\begin{appendix}


\section{Antiperiodic Rearrangement Inequalities}\label{A:Polya}

In this section, we establish results pertaining to symmetric decreasing rearrangements
of $T$-antiperiodic functions and their consequences.
Given a function $f\in L^2_{\rm per}([0,2T];\RM)\cup C^0(\RM)$ we will end up utilizing \emph{four separate 
equimeasurable rearrangements} of $f$, which we will describe below.

\

Throughout, we denote by $m$ the Lebesgue measure on $\RM/2T\ZM$.  Given a continuous $2T$-periodic
function $f:\RM\to\RM$, we define the $2T$-periodic symmetric decreasing rearrangement $f^{*_{2T}}$ of $f$
on $(-T,T)$ by
\[
f^{*_{2T}}(x)=\inf\left\{t:m(\{z\in(-T,T):f(z)>t\})\leq 2|x|\right\}\quad\textrm{for}~~x\in[-T,T]
\]
and note that $f^{*_{2T}}$ is even, nonincreasing on $(0,T)$, and satisfies $f^{*_{2T}}(0)=\max_{x\in\RM}f(x)$
and $f^{*_{2T}}(T)=\min_{x\in\RM}f(x)$.  Similarly, we define the $2T$-periodic rearrangement $f^{\#_{2T}}(x)$
by
\[
f^{\#_{2T}}(x)=f^{*_{2T}}(x-T/2)
\]
and note that $f^\#$ is even about $x=T/2$, nondecreasing on $(-T/2,T/2)$ and satisfies
$f^{\#_{2T}}(T/2)=\max_{x\in\RM}f(x)$ and $f^{\#_{2T}}(-T/2)=\min_{x\in\RM}f(x)$.  Both $f^{*_{2T}}$ 
and $f^{\#_{2T}}$ have the same
distribution functions as $f$ on $(-T,T)$ so that, in particular,
\[
\|f\|_{L^p(-T,T)}=\|f^{*_{2T}}\|_{L^p(-T,T)}=\|f^{\#_{2T}}\|_{L^p(-T,T)}
\]
for all $f\in L^p_{\rm per}(0,T)$ and $1\leq p\leq\infty$.  Of special interest here is that if $f$ is $T$-antiperiodic,
then $f^{*_{2T}}$ is an even, $T$-antiperiodic function on $\RM$ while $f^{\#_{2T}}$ is an odd, $T$-antiperiodic
function on $\RM$.

Our first result is an analogue of the classical P\'olya-Szeg\"o inequality, which states that the kinetic
energy is nonincreasing under symmetric decreasing rearrangement.

\begin{lemma}[P\'olya-Szeg\"o]\label{L:szego}
For all $\alpha\in(1,2)$ and $f\in H^{\alpha/2}_{\rm per}([0,2T];\mathbb{R})$, we have
\[
\int_{-T}^T\left|\Lambda^{\alpha/2} f^{*_{2T}}\right|^2dx=
\int_{-T}^T\left|\Lambda^{\alpha/2} f^{\#_{2T}}\right|^2dx \leq\int_{-T}^T\left|\Lambda^{\alpha/2} f\right|^2dx.
\]
In particular, if such an $f$ is $T$-antiperiodic, then
\[
\int_{-T/2}^{T/2}\left|\Lambda^{\alpha/2} f^{*_{2T}}\right|^2dx=
\int_{-T/2}^{T/2}\left|\Lambda^{\alpha/2} f^{\#_{2T}}\right|^2 
	\leq\int_{-T/2}^{T/2}\left|\Lambda^{\alpha/2} f\right|^2dx.
\]
\end{lemma}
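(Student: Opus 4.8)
The plan is to establish the Pólya--Szegő inequality for the Calderón-type operator $\Lambda^{\alpha/2}$ acting on $2T$-periodic functions by reducing it, via the subordination formula \eqref{E:subordination} and the associated heat semigroup, to the classical Pólya--Szegő inequality for the periodic heat flow. The starting point is the standard variational identity expressing the homogeneous fractional Sobolev seminorm as a limit of heat-semigroup quantities: for $f\in H^{\alpha/2}_{\rm per}([0,2T];\RM)$ one has
\[
\int_{-T}^T\left|\Lambda^{\alpha/2}f\right|^2dx
= c_\alpha\int_0^\infty \frac{1}{t^{1+\alpha/2}}\left(\|f\|_{L^2(-T,T)}^2-\int_{-T}^T f(x)\,e^{-t\Lambda^2}f(x)\,dx\right)dt,
\]
valid up to an explicit positive constant $c_\alpha$, which follows by Plancherel on the Fourier side from $|\pi n/T|^\alpha = c_\alpha\int_0^\infty t^{-1-\alpha/2}\bigl(1-e^{-t(\pi n/T)^2}\bigr)\,dt$. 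Here $e^{-t\Lambda^2}$ is the $2T$-periodic heat semigroup, whose kernel is the periodized Gauss--Weierstrass kernel $\vartheta_t$ from \eqref{E:gauss-weierstrass} (up to the scaling of the spatial variable).

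The key step is then a periodic Riesz rearrangement inequality: for nonnegative $g,h\in L^1_{\rm per}(-T,T)$ and the periodized Gaussian $\vartheta_t$ (which is even, $2T$-periodic, and symmetric decreasing on $[0,T]$ by \cite{Andersson13}),
\[
\int_{-T}^T\int_{-T}^T g(x)\,\vartheta_t(x-y)\,h(y)\,dx\,dy
\le \int_{-T}^T\int_{-T}^T g^{*_{2T}}(x)\,\vartheta_t(x-y)\,h^{*_{2T}}(y)\,dx\,dy.
\]
This is the circular analogue of the Riesz rearrangement inequality; it follows from the layer-cake decomposition together with the one-dimensional circular rearrangement inequality for indicator functions of arcs (the fact that, among measurable subsets of the circle of prescribed measure, arcs centered at $0$ maximize $\iint \mathbf{1}_A(x)\vartheta_t(x-y)\mathbf{1}_B(y)$, using that $\vartheta_t$ is symmetric decreasing about $0$). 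Applying this with $g=h=f$ gives $\int f\,e^{-t\Lambda^2}f\,dx \le \int f^{*_{2T}}\,e^{-t\Lambda^2}f^{*_{2T}}\,dx$, and since $\|f\|_{L^2}=\|f^{*_{2T}}\|_{L^2}$ (equimeasurability), feeding this into the integral identity above yields the desired inequality for $f^{*_{2T}}$. The equality $\int|\Lambda^{\alpha/2}f^{*_{2T}}|^2 = \int|\Lambda^{\alpha/2}f^{\#_{2T}}|^2$ is immediate because $f^{\#_{2T}}(x)=f^{*_{2T}}(x-T/2)$ is a translate of $f^{*_{2T}}$ and the seminorm is translation-invariant.

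Finally, for the $T$-antiperiodic case one notes that when $f$ is $T$-antiperiodic, $f^{*_{2T}}$ is again $T$-antiperiodic (this is recorded just above the lemma statement), so both $f$ and $f^{*_{2T}}$ have zero mean over $(-T/2,T/2)$ and, by antiperiodicity, $\int_{-T}^T|\Lambda^{\alpha/2}g|^2 = 2\int_{-T/2}^{T/2}|\Lambda^{\alpha/2}g|^2$ for $g\in\{f,f^{*_{2T}},f^{\#_{2T}}\}$; dividing the already-established inequality by $2$ gives the second displayed statement. The main obstacle I anticipate is the careful justification of the periodic Riesz rearrangement inequality — in particular verifying the circular rearrangement inequality for arcs against the symmetric-decreasing kernel $\vartheta_t$, and checking that the ``centered arc'' rearrangement of level sets is exactly the rearrangement that produces $f^{*_{2T}}$. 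An alternative route that avoids Riesz rearrangement entirely is to use the heat-flow monotonicity argument directly: run the periodic heat equation with initial data $f$ versus $f^{*_{2T}}$, use that symmetric decreasing rearrangement is nonexpansive in every $L^p$ and that the heat flow commutes appropriately with rearrangement in the limit, and then differentiate the $\dot H^{\alpha/2}$-seminorm along the flow; I would fall back to this if the rearrangement inequality proves delicate to state cleanly in the periodic setting.
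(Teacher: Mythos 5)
Your proposal is correct, and it rests on the same two pillars as the paper's proof --- a circular (periodic) Riesz rearrangement inequality against a symmetric-decreasing periodized kernel, plus a representation of the $\dot H^{\alpha/2}$ quadratic form through a semigroup --- but it implements them differently. The paper works with the \emph{fractional} semigroup $e^{-\Lambda^\alpha t}$ itself: it invokes Lemma \ref{L:properties_of_Kp} to know that the periodized fractional kernel $K_p(\cdot,t)$ is already its own symmetric decreasing rearrangement, applies the Baernstein--Taylor circular rearrangement theorem \cite[Theorem 2]{BT76} to get $\left<f,e^{-\Lambda^\alpha t}f\right>\le\left<f^{*_{2T}},e^{-\Lambda^\alpha t}f^{*_{2T}}\right>$, and then passes to the limit $t\to 0^+$ in the difference quotient $t^{-1}\bigl(\left<f,e^{-\Lambda^\alpha t}f\right>-\|f\|_{L^2}^2\bigr)$. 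You instead subordinate the quadratic form to the \emph{classical} periodic heat semigroup via the L\'evy--Khintchine identity $|\xi|^\alpha=c_\alpha\int_0^\infty t^{-1-\alpha/2}(1-e^{-t\xi^2})\,dt$ and apply the rearrangement inequality to the periodized Gaussian $\vartheta_t$. Your route has the advantage of needing only the monotonicity of the Gauss--Weierstrass periodization from \cite{Andersson13}, bypassing the Bernstein-theorem argument that establishes monotonicity of $K_p$; the paper's route avoids the Gamma-function identity and reuses machinery it has already built. Two small points: the circular Riesz inequality you need is precisely the Baernstein--Taylor theorem the paper cites, so you need not reprove it from arcs and layer-cake; and since $f$ is signed while you state the rearrangement inequality for nonnegative $g,h$, you should either cite the signed version or note the standard reduction $f\mapsto f+c$ with $c$ large, observing that the cross terms $\iint c\,\vartheta_t(x-y)f(y)\,dx\,dy$ are rearrangement-invariant because $\int f=\int f^{*_{2T}}$.
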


\begin{proof}
Given $f\in H^{\alpha/2}_{\rm a}([0,T];\RM)$, observe that for all $t>0$ we have
\[
\left<f,e^{-\Lambda^\alpha t}f\right> = \int_{-T}^T\int_{-T}^T f(x)K_p(x-y,t)f(y)dx~dy,
\]
where here $K_p(x,t)$ is the $2T$-periodic integral kernel associated to the semigroup $e^{-\Lambda^\alpha t}$
defined in \eqref{E:Kp_rep1}.  By Lemma \ref{L:properties_of_Kp}, $K_p(\cdot,t)=K_p^{*_{2T}}(\cdot,t)$
for all $t>0$ and hence the Bernstein-Taylor Theorem \cite[Theorem 2]{BT76} we have
\[
\int_{-T}^T\int_{-T}^T f(x)K_p(x-y,t)f(y)dx~dy\leq \int_{-T}^T\int_{-T}^T f^{*_{2T}}(x)K_p(x-y,t)f^{*_{2T}}(y)dx~dy
\]
so that
\[
\left<f,e^{-\Lambda^\alpha t}f\right>\leq \left<f^{*_{2T}},e^{-\Lambda^\alpha t}f^{*_{2T}}\right>
\]
for all $f\in H^{\alpha/2}_{\rm a}([0,T];\RM)$ and $t>0$.  Since $f$ and $f^{*_{2T}}$ 
are equimeasurable, it follows that
\[
\frac{\left<f,e^{-\Lambda^\alpha t}f\right>-\|f\|_{L^2(-T,T)}^2}{t}\leq
\frac{\left<f^{*_{2T}},e^{-\Lambda^\alpha t}f^{*_{2T}}\right>-\|f^{*_{2T}}\|_{L^2(-T,T)}^2}{t}
\]
for all $t>0$.  Taking $t\to 0^+$ yields the desired result for the rearrangement $f^{*_{2T}}$.  
The corresponding result for $f^{\#_{2T}}$ and the restriction to $T$-antiperiodic functions
now follows trivially.
\end{proof}

Next, we complement the above result by considering the effect of the above rearrangements
on linear potentials.

\begin{lemma}\label{L:potentialrearrange}
Let $V:\RM\to\RM$ be an even, smooth and $T$-periodic potential.  
\begin{itemize}
\item[(i)] If $V(x)$ is nonincreasing on $(0,T/2)$, then
\[
\int_{-T/2}^{T/2}V(x) f^2(x)dx\geq\int_{-T/2}^{T/2}V(x)\left(f^{\#_{2T}}\right)^2(x)dx
\]
for all continuous $f\in L^2_{\rm a}([0,T];\RM)$.
\item[(ii)] If $V(x)$ is nondecreasing on $(0,T/2)$, then
\[
\int_{-T/2}^{T/2}V(x) f^2(x)dx\geq\int_{-T/2}^{T/2}V(x)\left(f^{*_{2T}}\right)^2(x)dx
\]
for all continuous $f\in L^2_{\rm a}([0,T];\RM)$.
\end{itemize}
\end{lemma}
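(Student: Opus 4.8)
The plan is to reduce both inequalities to the classical one-dimensional Hardy--Littlewood rearrangement inequality on the circle $\R/T\Z$. Since $f$ is $T$-antiperiodic, $f^2$ is $T$-periodic, and every integrand appearing in the statement ($f^2$, $(f^{\#_{2T}})^2$, $(f^{*_{2T}})^2$, and $V$) is $T$-periodic, so each side of each claimed inequality is an integral over a single period $(-T/2,T/2)$. The first task is therefore to describe the squared rearrangements $(f^{*_{2T}})^2$ and $(f^{\#_{2T}})^2$ as ordinary symmetric rearrangements of $f^2$ over that period. Writing $g^\star$ for the symmetric decreasing rearrangement of a $T$-periodic $g$ on $\R/T\Z$ (the unique even function, nonincreasing on $(0,T/2)$, equimeasurable with $g$ over a period) and $g_\star$ for the symmetric increasing one, I would first establish the identifications
\[
(f^{*_{2T}})^2\big|_{(-T/2,T/2)}=(f^2)^\star,\qquad (f^{\#_{2T}})^2\big|_{(-T/2,T/2)}=(f^2)_\star .
\]

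For the first identification, recall that $f^{*_{2T}}$ is even, $T$-antiperiodic, and nonincreasing on $(0,T)$; since an even $T$-antiperiodic function vanishes at $T/2$, it follows that $f^{*_{2T}}\ge0$ on $(0,T/2)$, so $(f^{*_{2T}})^2$ is even and nonincreasing on $(0,T/2)$ --- precisely the structure of a symmetric decreasing rearrangement on $\R/T\Z$. I would then verify equimeasurability with $f^2$ over one period: for any $T$-antiperiodic $g$, the map $z\mapsto z+T$ is a measure-preserving bijection between $\{z\in(-T,T):g(z)>t\}$ and $\{z\in(-T,T):g(z)<-t\}$, so $m(\{g^2>t\})=2\,m(\{g>\sqrt t\})$ over $(-T,T)$; applying this to $f$ and to $f^{*_{2T}}$, which are equimeasurable over $(-T,T)$ by construction, shows that $f^2$ and $(f^{*_{2T}})^2$ are equimeasurable over $(-T,T)$, hence over a single period by $T$-periodicity. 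The second identification is then immediate from $(f^{\#_{2T}})^2(x)=(f^{*_{2T}})^2(x-T/2)$, which is even, $T$-periodic, equimeasurable with $f^2$, and nondecreasing on $(0,T/2)$.

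With these identifications in hand, I would invoke the Hardy--Littlewood inequality $\int g\,h\le\int g^\star h^\star$ together with its lower-bound counterpart $\int g\,h\ge\int g^\star h_\star$, the latter obtained by applying the former to $g$ and $C-h$ (with $C=\sup h$) and using $(C-h)^\star=C-h_\star$; here $h:=f^2$ is bounded because $f$ is continuous. For part (i), after subtracting the constant $\min V$ --- harmless since $f^2$ and $(f^{\#_{2T}})^2$ are equimeasurable --- the hypothesis that $V$ is even and nonincreasing on $(0,T/2)$ says exactly that $V=V^\star$, and the lower-bound inequality with $g=V$, $h=f^2$ gives $\int_{-T/2}^{T/2}V f^2\,dx\ge\int_{-T/2}^{T/2}V (f^2)_\star\,dx=\int_{-T/2}^{T/2}V(f^{\#_{2T}})^2\,dx$. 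For part (ii), I would set $C=\max V$ and $W=C-V\ge0$, note that $W$ is even and nonincreasing on $(0,T/2)$ so that $W=W^\star$, apply the basic inequality in the form $\int W f^2\le\int W^\star (f^2)^\star=\int W(f^{*_{2T}})^2$, and conclude $\int V f^2=C\int f^2-\int W f^2\ge C\int (f^2)^\star-\int W(f^2)^\star=\int V(f^{*_{2T}})^2$.

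The only genuinely delicate point is the identification step: one must carefully reconcile the fact that $f^{*_{2T}}$ and $f^{\#_{2T}}$ are defined through the distribution of $f$ over \emph{two} periods with the fact that the inequalities are posed over \emph{one}, and confirm that squaring a $T$-antiperiodic function halves its distribution function exactly as claimed, so that the resulting monotonicity and parity bookkeeping lines up with the notion of circular rearrangement. Once this is settled, the remainder is a routine application of one-dimensional rearrangement inequalities, and the continuity hypothesis on $f$ enters only to guarantee that $f^2$ is bounded for the $C-h$ argument.
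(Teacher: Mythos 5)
Your proof is correct and follows essentially the same route as the paper: both reduce the claim to the one-dimensional Hardy--Littlewood/Riesz rearrangement inequality on the circle $\RM/T\ZM$ after identifying $(f^{*_{2T}})^2$ and $(f^{\#_{2T}})^2$ on a single period with the $T$-periodic decreasing and increasing rearrangements of $f^2$. Your explicit verification of that identification via the measure-preserving shift $z\mapsto z+T$ fills in a step the paper merely asserts, and the paper's device of applying the inequality to $-V$ with rearrangements centered at $x=T/2$ is just a cosmetic variant of your $C-h$ normalization.
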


\begin{proof}
We begin by proving (i).  Notice by the hypothesis on $V$, the function $(-V(x))$ is even about $x=T/2$
and is nonincreasing on $(T/2,T)$.  By the Riesz inequality \cite[Section 3.4]{LL} we thus have
\[
\int_0^T (-V(x))f^2(x)dx\leq\int_{0}^T (-V(x))\left(f^2\right)^{\#_T}(x)dx,
\]
where here $(f^2)^{\#_T}$ denotes the $T$-periodic rearrangement of the $T$-periodic
function $f^2$ taken to be even about $x=T/2$ and nonincreasing on $(T/2,T)$.  Since
antiperiodicity of $f$ implies
\[
\left(f^2\right)^{\#_T}(x)=\left(f^{\#_{2T}}\right)^2(x)\quad\forall x\in(0,T),
\]
the estimate in (i) follows.

Similarly, if $V$ satisfies the hypotheses of (ii), then the Riesz inequality again gives
\[
\int_0^T \left(-V(x)\right)f^2(x)dx\leq\int_{0}^T \left(-V(x)\right)\left(f^2\right)^{*_T}(x)dx,
\]
where here $(f^2)^{*_T}$ denotes the $T$-periodic rearrangement of the $T$-periodic 
function $f^2$ taken to be even about $x=0$ and nonincreasing on $(0,T/2)$.  Antiperiodicity
of $f$ again implies that
\[
\left(f^2\right)^{*_T}(x)=\left(f^{*_{2T}}\right)^2(x)\quad\forall x\in(0,T/2),
\]
the estimate in (ii) follows.
\end{proof}

We now come to the main result of this appendix, providing an ordering between the even and odd ground
state antiperiodic eigenvalues of a periodic Schr\"odinger operator $L=-\Lambda^\alpha+V$ in terms of the monotonicity
properties of the potential $V$.

\

\noindent\textit{Proof of Proposition \ref{P:gsordering}}:
First, assume that $V(x)$ satisfies the hypothesis of (i) and suppose that $\psi$ is an eigenfunction
associated to the ground state eigenvalue of $L$ acting on $L^2_{\rm a,even}(0,T)$, normalized
to be real-valued and $\|\psi\|_{L^2(0,T)}=1$.  Then by Lemma \ref{L:szego} and Lemma \ref{L:potentialrearrange}
\begin{align*}
\min\sigma\left(L\big{|}_{L^2_{\rm a,even}(0,T)}\right)&=
\int_0^T\left|\Lambda^{\alpha/2}\psi\right|^2dx+\int_0^T V(x)\psi(x)^2dx\\
&\geq\int_0^T\left|\Lambda^{\alpha/2}\psi^{\#_{2T}}\right|^2dx+\int_0^TV(x)\left(\psi^{\#_{2T}}\right)^2(x)dx\\
&\geq\min\sigma\left(L\big{|}_{L^2_{\rm a,odd}(0,T)}\right),
\end{align*}
where the last inequality is justified since $\|\psi^{\#_{2T}}\|_{L^2(0,T)}=1$.  This verifies (i).
A similar proof establishes the ordering in (ii).

\end{appendix}

\section*{Acknowledgments}  \label{S:acknowledgments}
The authors would like to thank Thierry Gallay for useful discussions regarding the work
\cite{GH07}, as well as Mats Ehrnstr\"om and Erik Whalen for guiding us to the reference \cite{Andersson13}.  
We also thank Dmitry Pelinovsky, Atanas Stefanov, and Younghun Hong for several valuable discussions.
Finally, the authors are grateful to the anonymous  referee for their careful reading of the manuscript
and for several helpful and insightful suggestions.
The research of MAJ was supported by the National Science Foundation under
grants DMS-1614785 and DMS-1211183.  The research of KMC was supported by the National Science Foundation under
grant DMS-1211183.


\section*{Conflict of Interest}
This study was funded by the National Science Foundation under grant DMS-1211183 and DMS-1614785.

\bibliographystyle{plain}
\bibliography{FNLS_stability_antiper}

\end{document}